\documentclass[11pt]{article}
\usepackage{fullpage}
\usepackage{amssymb,latexsym,amsmath,amsthm}     
\usepackage{authblk}
\usepackage{bbm}
\usepackage{eqnarray}
\usepackage[utf8]{inputenc}
\usepackage{graphicx}
\usepackage{enumerate}
\usepackage[english]{babel}
\usepackage{color}
\usepackage[dvipsnames]{xcolor}
\usepackage[toc,page]{appendix}
\usepackage{scalerel,stackengine}
\usepackage{titling}
\usepackage{hyperref}
\hypersetup{
    colorlinks=true,
    linkcolor=blue,
    filecolor=blue,      
    urlcolor=blue,
    citecolor=blue,
    linktocpage=true
}

\setlength{\droptitle}{-7em}

\newtheorem{theorem}{Theorem}[section] 
\newtheorem{proposition}{Proposition}[section] 
\newtheorem*{proposition*}{Proposition}
\newtheorem{definition}{Definition}[section]
\newtheorem*{definition*}{Definition}
\newtheorem{lemma}{Lemma}[section]
\newtheorem*{lemma*}{Lemma}
\newtheorem{corollary}{Corollary}[section]
\newtheorem{remark}{Remark}[section]
\newtheorem{example}{Example}[section]

\newcommand{\expect}[1]{\left\langle {#1} \right\rangle}
\newcommand{\e}{_{\varepsilon}}
\newcommand{\brac}[1]{\left({#1}\right) }
\newcommand{\cb}[1]{\left\lbrace {#1} \right\rbrace}
\newcommand\norm[1]{\left\lVert#1\right\rVert}

\makeatletter
\newcommand{\setword}[2]{%
  \phantomsection
  #1\def\@currentlabel{\unexpanded{#1}}\label{#2}%
}
\makeatother

\newcommand{\twopartdef}[4]
{
	\left\{
		\begin{array}{ll}
			#1 & \mbox{if } #2 \\
			#3 & \mbox{if } #4
		\end{array}
	\right.
}

\def\Xint#1{\mathchoice
{\XXint\displaystyle\textstyle{#1}}%
{\XXint\textstyle\scriptstyle{#1}}%
{\XXint\scriptstyle\scriptscriptstyle{#1}}%
{\XXint\scriptscriptstyle\scriptscriptstyle{#1}}%
\!\int}
\def\XXint#1#2#3{{\setbox0=\hbox{$#1{#2#3}{\int}$ }
\vcenter{\hbox{$#2#3$ }}\kern-.6\wd0}}

\def\dashint{\Xint-}

\title{Stochastic unfolding and homogenization of spring network models\thanks{This is a pre-print of an article published in Multiscale Modeling \& Simulation in 2018. The final authenticated version is available online at: https://doi.org/10.1137/17M1141230}}
\author{Stefan Neukamm\thanks{stefan.neukamm@tu-dresden.de} }
\author{Mario Varga\thanks{mario.varga@tu-dresden.de}}
\affil{Faculty of Mathematics, Technische Universit\"at Dresden}
\date{28.02.2018}

\begin{document}

\maketitle

\abstract{
The aim of our work is to provide a simple homogenization and discrete-to-continuum procedure for energy driven problems involving stochastic rapidly oscillating coefficients. Our intention is to extend the periodic unfolding method to the stochastic setting. Specifically, we recast the notion of stochastic two-scale convergence in the mean by introducing an appropriate stochastic unfolding operator. This operator admits similar properties as the periodic unfolding operator, leading to an uncomplicated method for stochastic homogenization. Second, we analyze the discrete-to-continuum (resp., stochastic homogenization) limit for a rate-independent system describing a network of linear elasto-plastic springs with random coefficients.
\medskip

\noindent
{\bf Keywords:}  stochastic homogenization, discrete-to-continuum limit, unfolding, spring network models
}

\setlength{\parindent}{0pt}
\tableofcontents
\newpage

\section{Introduction}
The motivation for this paper is twofold: First, we introduce the method of \textit{stochastic unfolding} as an analogue to the periodic unfolding method, which in recent years has been successfully applied in the analysis and modeling of multiscale problems with periodic microstructure. Our intention is to provide an easily accessible method for stochastic homogenization and discrete-to-continuum analysis that enjoys many parallels to periodic homogenization via unfolding. Second, we analyze the macroscopic behavior (based on stochastic unfolding) of a rate-independent system describing a network of elasto-plastic springs with random coefficients. 
Our result derives (via stochastic homogenization) a continuum evolutionary rate-independent (linear) plasticity system starting from a discrete model. Discrete spring networks depict solid media as collections of material points that interact via one-dimensional elements with certain constitutive laws. They are widely used in material science and the mechanical engineering community.  On the one hand, they are used to model materials with an intrinsic discreteness (on a scale larger than the atomistic scale), such as granular media, truss-like structures, and composites. On the other hand, spring network models are used as a numerical approximation scheme for continuum models. We refer to \cite{ostoja2002lattice,jagota1994spring,hahn2010discrete} and the references therein. In this introduction we first give a brief overview of the stochastic unfolding method that we develop in this paper, and then discuss the discrete-to-continuum limits of random spring networks.
\smallskip

In order to give a brief overview of the stochastic unfolding, let us consider for a moment a prototypical example of convex homogenization: Let $O\subset \mathbb{R}^d$ be open and bounded, $p\in(1,\infty)$, $\varepsilon>0$, and consider the minimization problem
\begin{equation}\label{eq1}
\min_{u\in W^{1,p}(O)}\int_{O}V\e \brac{x,\nabla u(x)}dx \qquad\text{(subject to suitable boundary conditions)}.
\end{equation}
Above $V\e(x,F)$ denotes a family of energy densities that are convex in $F$, and which we assume to rapidly oscillate in $x$ on a scale $\varepsilon$. The objective of homogenization is to derive a simpler minimization problem, say
\begin{equation*}
  \min_{u\in W^{1,p}(O)}\int_{O}V_{0}\brac{\nabla u(x)}dx \qquad\text{(subject to suitable boundary conditions)},
\end{equation*}
with an effective (and simpler)  energy density $V_{0}$ that captures the behavior of (\ref{eq1}) for small $\varepsilon$. This is done by an asymptotic analysis for $\varepsilon\rightarrow 0$, and a classical way to approach this type of problems is based on two-scale convergence methods. The notion of (periodic) two-scale convergence was introduced and developed by Nguetseng \cite{nguetseng1989general} and Allaire \cite{allaire1992homogenization} (see also \cite{lukkassen2002two}). A sequence $u\e \in L^p(O)$ is said to two-scale converge to $u\in L^p(O \times \Box)$ if 
\begin{align*}
\int_{O}u\e(x)\varphi\brac{x,\frac{x}{\varepsilon}}dx \to \int_O\int_{\Box}u(x,y)\varphi(x,y)dy dx \quad \text{as }\varepsilon\to 0,
\end{align*}
for all $\varphi\in L^q(O,C_{\mathsf{per}}(\Box))$. Here, $\Box:=[-\frac{1}{2},\frac{1}{2})^d$ is the unit box, and $C_{\mathsf{per}}(\Box)$ is the space of continuous, $\Box$-periodic functions. The two-scale limit of a sequence refines its weak limit by capturing oscillations on a prescribed scale $\varepsilon$. It is therefore especially useful in homogenization problems involving linear (or monotone) operators and convex potentials with \textit{periodic coefficients}. With regard to problem (\ref{eq1}), two-scale convergence methods apply, e.g. if $V_{\varepsilon}(x,F)=V(x,\frac{x}{\varepsilon},F)$ with $V$ being periodic in its second component and sufficiently regular (e.g. continuous) in its first component.

In \cite{cioranescu2002periodic} the method of periodic unfolding was introduced based on the dilation technique \cite{arbogast1990derivation}. The idea of unfolding, which is closely related to two-scale convergence, is to introduce an operator (the unfolding operator), which embeds sequences of oscillating functions into a larger two-scale space, with the effect that two-scale convergence can be characterized by the usual weak convergence in the two-scale space. In some cases, this method facilitates a more straightforward, and operator theory flavored, analysis of periodic homogenization problems. In recent years periodic unfolding has been applied to a large variety of multiscale problems; e.g., see \cite{cioranescu2004homogenization,griso2004error,
mielke2007two,neukamm2010homogenization,mielke2014two,
ptashnyk2015locally,cazeaux2015homogenization,liero2015homogenization,piatnitski2017homogenization,hanke2017phase}. For a systematic investigation of two-scale calculus associated with the use of the periodic unfolding method we refer to \cite{cioranescu2008periodic,visintin2004some,visintin2006towards,mielke2007two,cioranescu2012periodic}.

Motivated by periodic two-scale convergence, in \cite{bourgeat1994stochastic} a related notion of \textit{stochastic two-scale convergence in the mean} was introduced. It is tailor-made for the study of stochastic homogenization problems. In particular, it applies to a stochastic version of problem (\ref{eq1}): Let $\Omega$ be a probability space with a corresponding measure-preserving dynamical system $\cb{T_x}_{x\in\mathbb{R}^d}$ (see Section \ref{section:297}). In the context of stochastic homogenization, we might view $\Omega$ as a configuration space, and $\expect{\cdot}$ (the associated expected value) as an ensemble average w.r.t. configurations. Then we might consider a stochastic version of \eqref{eq1}, namely
\begin{equation}\label{eq2}
\min_{u\in L^p(\Omega)\otimes W^{1,p}(O)} \expect{\int_{O}V(T_{\frac{x}{\varepsilon}}\omega,\nabla_x u(\omega,x))dx},
\end{equation}
where the potential $V(\omega,F)$ is parametrized  by $\omega\in\Omega$, and thus minimizers of \eqref{eq2} are random fields, i.e.,  they depend on $\omega\in\Omega$. The random potential $V\e(x,\cdot)=V(T_{\frac{x}{\varepsilon}}\omega,\cdot)$ in \eqref{eq2} is rapidly oscillating, and its statistics is homogeneous in space (i.e.,  for any finite number of points $x_1,...,x_m\in\mathbb{R}^d$ and all $F\in \mathbb{R}^d$, the joint distribution of $\{V(T_{x_i+z}\cdot,F)\}_{i=1,\ldots,m}$ is independent of the shift $z\in\mathbb{R}^d$). In contrast to periodic two-scale convergence, stochastic two-scale convergence requires test-functions defined not only on the physical space $O\subset\mathbb{R}^d$, but also on the probability space $\Omega$ (see Remark \ref{remark1}, where we recall the definition of stochastic two-scale convergence in the mean from \cite{bourgeat1994stochastic,andrews1998stochastic} in a discrete version).
\smallskip

\textbf{Stochastic unfolding.} In this paper we introduce a \textit{stochastic unfolding} \textit{method}, that (analogously to the periodic case) allows to characterize stochastic two-scale convergence in the mean by mere weak convergence in an extended space. Having discrete-to-continuum problems in mind, we concentrate in this paper on a discrete setting: For example, in (\ref{eq2}) $O$ is replaced by the discrete set $O\e:=O\cap \varepsilon\mathbb{Z}^{d}$ (equipped with a rescaled counting measure $m_{\varepsilon}$), and instead of the gradient we consider difference quotients (see Section \ref{Physical_space} for the specific discrete setting). As we shall demonstrate, the \textit{stochastic unfolding method} features many analogies to the periodic case; as a consequence it allows us to lift systematically and easily homogenization results and multiscale models for periodic media to the level of random media. In the following, in particular for readers familiar with periodic unfolding, we briefly summarize the main properties of stochastic unfolding and its analogies to the periodic case:
\begin{itemize}
\item We introduce an operator $\mathcal{T}_{\varepsilon}: L^p(\Omega\times \varepsilon\mathbb{Z}^{d})\rightarrow L^p(\Omega\times\mathbb{R}^d)$ which is a linear isometry, and we call it a stochastic unfolding operator (see Section \ref{section31}).
\item Two-scale convergence in the mean for $u\e\in L^p(\Omega\times\varepsilon\mathbb{Z}^{d})$ reduces to weak convergence of the unfolding $\mathcal{T}_{\varepsilon} u\e$ in $L^p(\Omega\times \mathbb{R}^d)$ (see Remark \ref{remark1}).
\item We define weak (strong) stochastic two-scale convergence as weak (strong) convergence of the unfolded sequence $\mathcal{T}_{\varepsilon} u\e$ (see Definition \ref{definitionUnf}).
\item (Compactness). Bounded sequences converge (up to a subsequence) in the weak stochastic two-scale sense (see Lemma \ref{basicfacts}).
\item (Compactness for gradients). If $u\e\in L^p(\Omega\times\varepsilon\mathbb{Z}^{d})$ is bounded and its (discrete) gradient is bounded, then (up to extracting a subsequence) $u\e$ weakly two-scale converges to $U_{0} \in L^p_{\mathsf{inv}}(\Omega)\otimes W^{1,p}(\mathbb{R}^d)$. Moreover, its gradient weakly two-scale converges, and the limit has a specific structure: $\nabla U_{0}+\chi$, where $\chi\in \mathbf{L}^p_{\mathsf{pot}}(\Omega)\otimes L^p(\mathbb{R}^d)$. Here, $\mathbf{L}^p_{\mathsf{pot}}(\Omega):=\overline{\left\lbrace D\varphi: \varphi \in L^p(\Omega) \right\rbrace}$, where $D$ denotes the \textit{horizontal derivative} for random variables and the closure is taken in $L^p(\Omega)$, and $L^p_{\mathsf{inv}}(\Omega)$ is the space of shift-invariant functions (see Section \ref{section:297} for precise definitions). In the ergodic case, the latter reduces to the space of constant functions, and thus the two-scale limit $U_0$ is deterministic, i.e.,  it does not depend on $\omega\in\Omega$.
The general structure of this compactness statement for gradients is quite similar to its analogon in the periodic case, which we briefly recall: Up to a subsequence, a bounded sequence $u\e\in W^{1,p}(\mathbb{R}^d)$ converges weakly to some $u_0\in W^{1,p}(\mathbb{R}^d)$, and the gradient $\nabla u\e$ weakly two-scale converges to $\nabla u_0(x)+v(x,y)$, where $v\in L^p(\mathbb{R}^d)\otimes L^p_{\mathsf{pot},\mathsf{per}}(\Box)$ with $\Box$ denoting the reference cell of periodicity (e.g. $\Box=[-\frac{1}{2},\frac{1}{2})^d$). In the periodic case, thanks to Poincar\'e's inequality on $\Box$, any periodic, conservative field $v\in L^p_{\mathsf{pot},\mathsf{per}}(\Box)$, can be represented as $v=\nabla_y\varphi$ for some potential $\varphi\in W^{1,p}_{\mathsf{per}}(\Box)$. Thus, the weak two-scale limit of $\nabla u\e$ takes the form $\nabla u_0(x)+\nabla_y\varphi(x,y)$ with $\varphi\in L^p(\mathbb{R}^d,W^{1,p}_{\mathsf{per}}(\Box))$. 

In the stochastic case, typically it is not possible to represent $v\in \mathbf{L}^p_{\mathsf{pot}}(\Omega)$ with the help of a potential defined on $\Omega$. This is one of the main differences between stochastic and periodic homogenization. %
\item (Recovery sequences). For $U_0$ and $\chi$ as above, we construct a sequence $u\e\in L^p(\Omega\times \varepsilon\mathbb{Z}^{d})$ which satisfies
\begin{equation*}
\mathcal{T}_{\varepsilon} u_{\varepsilon} \to U_0, \quad \mathcal{T}_{\varepsilon}\nabla^{\varepsilon} u_{\varepsilon} \to \nabla U_0 + \chi \quad \text{strongly in }L^p(\Omega\times \mathbb{R}^d)
\end{equation*}
(see Corollary \ref{rem5}).
\item The following transformation formula holds:
\begin{equation*}
\expect{\int_{\varepsilon\mathbb{Z}^{d}}V(T_{\frac{x}{\varepsilon}}\omega,v(\omega,x))dm\e(x)}=\expect{\int_{\mathbb{R}^d}V(\omega,\mathcal{T}_{\varepsilon} v(\omega,x))dx}.
\end{equation*}
Using this formula and the previous properties, the $\Gamma$-convergence analysis of the discrete version of (\ref{eq2}) becomes straightforward, and relies (as the only noteworthy additional ingredient) on the weak lower-semi\-continuity of convex integral functionals; see Proposition \ref{pro67} and Theorem \ref{gammatheorem}.
\end{itemize}
We would like to remark that some of the statements above (in particular those that involve only \textit{weak} two-scale convergence) have already been established in the continuum setting in \cite{bourgeat1994stochastic}. However, the arguments that we present have a different twist, since they are based on the unfolding operator and apply to the discrete setting.

In contrast to periodic (deterministic) two-scale convergence, in the stochastic case different meaningful notions for two-scale convergence exist, since one may ask for convergence in the $L^p(\Omega)$-sense ($\Omega$ being the probability space) or in a quenched sense, i.e.,  for a.e. $\omega\in \Omega$. The former corresponds to stochastic two-scale convergence in the mean and the notion of stochastic unfolding that we introduce in this paper. The latter corresponds to a finer notion of (quenched) stochastic two-scale convergence, as considered in \cite{zhikov2006homogenization,heida2011extension}, we also refer to \cite{mathieu2007quenched,faggionato2008random} for a discrete version of (quenched) two-scale convergence. 
\smallskip

\textbf{Discrete-to-continuum limits of random spring networks.} In the second part of this paper, we study the macroscopic, rate-independent behavior of periodic networks formed of elasto-plastic springs with random material properties. In the following, we briefly summarize our result in the simplest (nontrivial) two-dimensional setting. In Section \ref{section:4} we shall treat a general, multidimensional case.
To explain the model, we first consider a single spring that in a natural state has endpoints $x_0,x_1\in\mathbb{R}^d$, and thus is aligned with $b:=x_1-x_0$. We describe a deformation of the spring with the help of a displacement function $v$ that maps an endpoint $x_i$ to its new position $x_i+v(x_i)$. As the measure of relative elongation (resp. compression) of the spring, we consider the Cauchy strain $\frac{|(b+|b|\partial_b v)|-|b|}{|b|}$ with $\partial_b v:=\frac{v(x_0+b)-v(x_0)}{|b|}$. If the displacement is (infinitesimally) small $v=\delta u$ with $0<\delta \ll 1$ and $u: \cb{x_0,x_1} \to \mathbb{R}^d$, we arrive (by rescaling the strain $\frac{1}{\delta}\frac{|(b+|b|\partial_b v)|-|b|}{|b|}$ and passing to the limit $\delta \to 0$) at the \textit{linearized} strain $\frac{b}{|b|}\cdot \partial_b u$. As is usual in linear elasto-plasticity (see e.g. \cite[Section 3]{han2012plasticity}), we assume that the linearized strain admits an additive decomposition $\frac{b}{|b|}\cdot \partial_b u=e+z$, where $e$ and $z$ are its elastic and plastic parts, respectively. The force (its intensity) exerted by the spring is linear in the elastic strain: $\sigma=ae$, $a>0$ being the spring constant. We define a free energy (describing materials with linear kinematic hardening)
\begin{equation*}
\mathcal E_b(u,z):=\frac{1}{2}a \brac{\frac{b}{|b|}\cdot \partial_b u-z}^2+\frac{1}{2}h z^2, 
\end{equation*}
where $h>0$ denotes a hardening parameter.
The rate-independent evolution of the elasto-plastic spring under a loading $l:[0,T]\times \cb{x_0,x_1}\rightarrow \mathbb{R}^d$ is determined by
\begin{align}\label{ev11}
\begin{split}
& (-1)^{i}\brac{\frac{b}{|b|}\cdot \partial_b u(t)-z(t)}\frac{b}{|b|}+l(t,x_i)=0,\; i=1,2,\\
& \dot{z}(t)\in \partial I_{[-\sigma_y,\sigma_y]}\brac{-\frac{\partial \mathcal{E}_b}{\partial z}(u(t),z(t))}.
\end{split}
\end{align}
In (\ref{ev11}), $\sigma_y\geq 0$ is the yield stress of the spring, $\partial I_{[-\sigma_y,\sigma_y]}$ denotes the convex subdifferential of $I_{[-\sigma_y,\sigma_y]}$, which is the indicator function of the set $[-\sigma_y,\sigma_y]$. Note that the first two equations are force balance equations (inertial terms are disregarded), reasonable in regimes of small displacements, and the second expression is a flow rule for the variable $z$. 

We consider a network of springs $E=\{ e=[x,x+\varepsilon b]\,:\,x \in\varepsilon\mathbb{Z}^{2},\, b\in\{e_1,e_2,e_1+e_2\} \}$, where the nodes $x\in \varepsilon\mathbb{Z}^{2}$ represent the reference configuration of particles connected by springs. The displacement of the network is described with help of a map $u:\varepsilon\mathbb{Z}^{2} \to \mathbb{R}^2$, and the plastic strains of the springs are accounted by an internal variable $z: \varepsilon\mathbb{Z}^{2} \to \mathbb{R}^3$ (e.g. $z_1(x)$ is the plastic strain of the spring $[x,x+\varepsilon e_1]$). We assume that the particles outside of a set $O\e=O\cap \varepsilon\mathbb{Z}^{d}$ are fixed, i.e.,  $u=0$ in $\varepsilon\mathbb{Z}^{d} \setminus O\e$; furthermore, we suppose that $z$ is supported in $O\e^+:=\cb{x\in \varepsilon\mathbb{Z}^{2}:(x,x+\varepsilon b)\cap O \neq \emptyset \text{ for some }b\in \cb{e_1,e_2,e_1+e_2}}$. A small external force $\varepsilon  l\e :[0,T]\times O\e \to \mathbb{R}^2$ acts on the system. According to the evolution law (\ref{ev11}) for single springs, the evolution of the network is determined by (for $t\in [0,T]$)
\begin{align*}
& \sum_{b\in \cb{e_1,e_2,e_1+e_2}} -\partial_{-\varepsilon b}\brac{|b|a(x,b)\brac{\frac{b}{|b|}\cdot \partial_{\varepsilon b} u(t,x)-z_b(t,x)}} \frac{b}{|b|}+l\e(t,x)=0 \text{ in }O\e,\\
& \dot{z}_b(t,x)\in \partial I_{[-\sigma_y(x,b),\sigma_y(x,b)]}\brac{-\frac{\partial \mathcal{E}_b}{\partial z_b}(u(t,x),z_b(t,x))}  \text{ in }O\e^+, b\in\cb{e_1,e_2,e_1+e_2},
\end{align*}
which is a superposition of (\ref{ev11}).
We tacitly identify $b\in \cb{e_1,e_2,e_1+e_2}$ with indices $i={1,2,3}$. The coefficients $a(x,b)$, $h(x,b)$, $\sigma_y(x,b)$ describe the properties of the spring $[x,x+\varepsilon b]$.
The above equations may be equivalently recast in the global energetic formulation for rate-independent systems (see Appendix \ref{appendix2}) with the help of energy and dissipation functionals, respectively: $\mathcal{E}\e:[0,T]\times L^2_0(O\e)^2\times L^2_0(O\e^+)^3\to \mathbb{R}$, $\Psi\e:L^2_0(O\e)^2\times L^2_0(O\e^+)^3\to [0,\infty)$,
\begin{align*}
\mathcal{E}\e(t,u,z)= & \int_{O\e^+}\frac{1}{2}\mathcal{A}(x)\brac{\nabla^{\varepsilon}_{s}u(x)-z(x)}\cdot \brac{\nabla^{\varepsilon}_{s}u(x)-z(x)} \\ &+\frac{1}{2}\mathcal{H}(x)z(x)\cdot z(x) dm\e(x)-\int_{O\e}l\e(t,x)\cdot u(x)dm\e(x),\\
\Psi\e(u,z)= & \sum_{b\in \cb{e_1,e_2,e_1+e_2}} |b|\int_{O\e^+} \sigma_y(x,b)|z_b(x)|dm\e(x).
\end{align*}
Above, the coefficients are given in the form $\mathcal{A}(x)=diag\brac{a(x,e_1),a(x,e_2), \sqrt{2}a(x,e_1+e_2)}$, $\mathcal{H}(x)= diag(h(x,e_1),h(x,e_2), h(x,e_1+e_2))$, and $\nabla^{\varepsilon}_s$ stands for the \textit{symmetrized gradient} $\nabla^{\varepsilon}_{s}u=\brac{\frac{b}{|b|}\cdot \partial_{\varepsilon b} u}_{b\in \cb{e_1,e_2,e_1+e_2}}$.  
We assume that the coefficients are random fields oscillating on a scale $\varepsilon$. In particular, the deterministic coefficients $\mathcal{A}(x)$, $\mathcal{H}(x)$ and $\sigma_y(x,b)$ in the above functionals are replaced by realizations of rescaled stationary random fields $\mathcal{A}(T_{\frac{x}{\varepsilon}}\omega)$, $\mathcal{H}(T_{\frac{x}{\varepsilon}}\omega)$ and $\sigma^b_y(T_{\frac{x}{\varepsilon}}\omega)$.  As a consequence of this, the solutions of the corresponding evolutionary equation at each time instance are not deterministic functions but rather random fields on $\Omega\times \varepsilon\mathbb{Z}^{2}$. Under suitable assumptions (cf. Section \ref{sectionERIS}), there exists a unique solution $(u\e,z\e)\in C^{Lip}([0,T],(L^2(\Omega)\otimes L^2_0(O\e)^2) \times (L^2(\Omega)\otimes L^2_0(O\e^{+})^3))$ to the above described microscopic rate-independent system.

Applying the method of stochastic unfolding, we are able to capture the averaged (w.r.t. the probability measure) behavior of the solution $(u\e,z\e)$ in the limit $\varepsilon\rightarrow 0$. Particularly, we show that, upon assuming suitable strong convergence for the initial data and forces, there exists $(U,Z,\chi)\in C^{Lip}([0,T],H^1_0(O)^2 \times L^2(\Omega\times O)^3\times (\mathbf{L}^2_{\mathsf{pot}}(\Omega)\otimes L^2(O))^2)$ which solves an effective rate-independent system on a continuum physical space (see Section \ref{sectionERIS}), and for every $t\in [0,T]$
\begin{equation*}
(u\e(t), z\e(t))\overset{c2}{\rightarrow}(U(t),Z(t),\chi(t)),
\end{equation*}
where $\overset{c2}{\rightarrow}$ denotes ``cross" two-scale convergence, which is explained in Section \ref{sectionERIS}.

In the continuum case, similar results have been obtained, for deterministic periodic materials in \cite{mielke2007two,hanke2011homogenization} (via periodic unfolding), and recently for random materials in \cite{heida2017stochastic1} (using quenched stochastic two-scale convergence) and \cite{heida2016non,heida2017stochastic3}. We discuss the literature on problems involving discrete-to-continuum transition in more detail in Section \ref{section:4}.

If we consider the constraint $z\e=0$ and time-independent force $l\e(t)=l\e$, the above problem boils down to the homogenization of the functional $u\mapsto\mathcal{E}\e(0,u,0)$, which corresponds to the discrete-to-continuum limit of the static equilibrium of a spring network with (only) elastic interactions.

We remark that the methods in this paper apply as well to systems with different constitutive laws; e.g., one might consider an energy functional with an additional term depending on the gradient of the internal variable $z\e$, as is the case in gradient plasticity (see Section \ref{S:gradient}). In the ergodic case, we even obtain a deterministic elasto-plastic limiting model. Another interesting extension of our method (which we do not discuss in this paper) is the discrete-to-continuum analysis of random spring networks featuring damage or fracture. The convergence result that we establish can be seen as a justification of continuum models for microstructural spring networks that feature uncertainty in the constitutive relations on the microscopic scale. In this context, the method could also be applied to prove the consistency of computational schemes based on the lattice method as discussed in the mechanical engineering community (e.g. see \cite{hahn2010discrete}).
\smallskip

\textbf{Structure of the paper.} In Section \ref{generalFramework}, we introduce a convenient setting for problems involving homogenization and the passage from discrete to continuum systems. Section \ref{section:3} is devoted to the introduction of the stochastic unfolding operator and its most important properties. In Section \ref{section:4}, we apply the stochastic unfolding method to an example of a multidimensional network of elastic/elasto-plastic springs with random coefficients.
\section{General framework}\label{generalFramework}
In this section, we introduce the setting for functions on a discrete/continuum physical space suited for problems involving discrete-to-continuum transitions. In addition, we present the standard setting for stochastic homogenization problems.
\subsection{Functions and differential calculus on $\varepsilon\mathbb{Z}^{d}$ and $\mathbb{R}^d$}\label{Physical_space}
Throughout the paper we consider $p,q\in (1,\infty)$ exponents of integrability that are dual, i.e.,  $\frac{1}{p}+\frac{1}{q}=1$. $\cb{e_i}_{i=1,...,d}$ denotes the standard basis of $\mathbb{R}^d$. For $\varepsilon>0$, we denote the Banach space of $p$-summable functions by $L^p(\varepsilon\mathbb{Z}^{d}):=\cb{ u:\varepsilon\mathbb{Z}^{d} \to \mathbb{R}: \brac{\varepsilon^d \sum_{x\in \varepsilon\mathbb{Z}^{d}} u^p(x)}^{\frac{1}{p}}<\infty }$. For our purposes it is convenient to view $L^p(\varepsilon\mathbb{Z}^{d})$ as the $L^p$-space of $p$-integrable functions on the measure space $\brac{\varepsilon\mathbb{Z}^{d}, 2^{\varepsilon\mathbb{Z}^{d}},m\e}$ with $m\e=\varepsilon^d \sum_{x\in \varepsilon\mathbb{Z}^{d}} \delta_x$. In particular, we use the notation $\int_{\varepsilon\mathbb{Z}^{d} }u(x)dm\e(x):=\varepsilon^d \sum_{x\in \varepsilon\mathbb{Z}^{d}}u(x)$.
For $u:\varepsilon\mathbb{Z}^{d}\rightarrow \mathbb{R}$ and $g=(g_1,...,g_d):\varepsilon\mathbb{Z}^{d}\rightarrow \mathbb{R}^d$, we set
\begin{align*}
\nabla^{\varepsilon}_iu(x)&=\frac{u(x+\varepsilon e_i)-u(x)}{\varepsilon}, & \nabla_i^{\varepsilon,*}u(x)&=\frac{u(x-\varepsilon e_i)-u(x)}{\varepsilon}, \\
\nabla^{\varepsilon}u(x) &=(\nabla^{\varepsilon}_1 u(x),...,\nabla^{\varepsilon}_d u(x) ), & \nabla^{\varepsilon,*}g(x)& =\sum_{i=1}^{d} \nabla_i^{\varepsilon,*}g_i(x),
\end{align*}
and we call $\nabla^{\varepsilon}$ \textit{discrete gradient} and $\nabla^{\varepsilon,*}$ \textit{(negative) discrete divergence} (in analogy with the usual differential operators $\nabla$ and $-div$). For $u\in L^p(\varepsilon\mathbb{Z}^{d})$, $g\in L^{q}(\varepsilon\mathbb{Z}^{d})^d$, we have the discrete integration by parts formula
\begin{equation*}
  \int_{\varepsilon\mathbb{Z}^{d}}\nabla^{\varepsilon}u(x) \cdot g(x) dm_{\varepsilon}(x)=\int_{\varepsilon\mathbb{Z}^{d}} u(x) \nabla^{\varepsilon,*}g(x) dm_{\varepsilon}(x).
\end{equation*}
\begin{definition}[Weak and strong convergence] Consider $U\in L^p(\mathbb{R}^d)$ and a sequence $u\e \in L^p(\varepsilon\mathbb{Z}^{d})$. We say that
\begin{itemize}
\item
$u\e$ weakly converges to $U$ (denoted by $u\e \rightharpoonup U$ in $L^p(\mathbb{R}^d)$) if
\begin{align*}
& \limsup_{\varepsilon \to 0}\|u\e\|_{L^p(\varepsilon\mathbb{Z}^{d})}< \infty \text{ and}\\
& \lim_{\varepsilon\to 0} \int_{\varepsilon\mathbb{Z}^{d}}u\e(x) \eta(x) dm_{\varepsilon}(x) = \int_{\mathbb{R}^d} U(x) \eta(x) dx \qquad \text{for all }\eta \in C_c^{\infty}(\mathbb{R}^d).
\end{align*}
\item $u\e$ strongly converges to $U$ (denoted by $u\e \to U$ in $L^p(\mathbb{R}^d)$) if
\begin{align*}
u\e \rightharpoonup U \text{ in }L^p(\mathbb{R}^d)\text{ and } \lim_{\varepsilon\to 0}\|u\e\|_{L^p(\varepsilon\mathbb{Z}^{d})} = \|U\|_{L^p(\mathbb{R}^d)}.
\end{align*}
\end{itemize}
\end{definition}

It is convenient to consider piecewise-constant and piecewise-affine interpolations of functions in $L^p(\varepsilon\mathbb{Z}^{d})$.
\begin{definition}
\begin{enumerate}[(i)]
\item For $u: \varepsilon\mathbb{Z}^{d}\rightarrow \mathbb{R}$, its piecewise-constant interpolation $\overline{u}:\mathbb{R}^d\rightarrow \mathbb{R}$   (subordinate to $\varepsilon\mathbb{Z}^{d}$) is given by $\overline{u}(x)=\sum_{y\in\mathbb{Z}^d} \mathbf{1}_{y+\Box}\brac{\frac{x}{\varepsilon}}u(\lfloor x \rfloor\e)$, 
where $\Box=[-\frac{1}{2},\frac{1}{2})^d$ is the unit box and $\lfloor x \rfloor\e \in \varepsilon\mathbb{Z}^{d}$ is defined by $x-\lfloor x\rfloor\e\in \varepsilon\Box$.
\item Consider a triangulation of $\mathbb{R}^d$ into $d$-simplices with nodes in $\varepsilon\mathbb{Z}^{d}$ (e.g. Freudenthal's triangulation). For $u: \varepsilon\mathbb{Z}^{d}\rightarrow \mathbb{R}$, we denote its piecewise-affine interpolation (w.r.t. the triangulation) by $\widehat{u}:\mathbb{R}^d\rightarrow \mathbb{R}$.   
\item The $\varepsilon\mathbb{Z}^{d}$-discretization $\pi_{\varepsilon}: L^1_{\mathsf{loc}}(\mathbb{R}^d)\rightarrow \mathbb{R}^{\varepsilon\mathbb{Z}^{d}}$ is defined as
\begin{equation*}
  (\pi_{\varepsilon}U)(x)=\dashint_{x+\varepsilon \Box}U(y) dy.
\end{equation*}
\end{enumerate}
\end{definition}
\begin{remark}
Note that $\overline{(\cdot)}:L^{p}(\varepsilon\mathbb{Z}^{d})\rightarrow L^{p}(\mathbb{R}^d)$, $u \mapsto \overline{u}$
defines a linear isometry. Also, $\pi_{\varepsilon}:L^{p}(\mathbb{R}^d)\rightarrow L^p(\varepsilon\mathbb{Z}^{d})$ is linear and bounded with $\|\pi_{\varepsilon}\|_{L^p(\mathbb{R}^d)\rightarrow L^{p}(\varepsilon\mathbb{Z}^{d})}\leq 1$. Furthermore, $\pi\e\circ\overline{(\cdot)}=Id$ on $L^p(\varepsilon\mathbb{Z}^{d})$, and we define $\overline{\pi}\e:=\overline{(\cdot)}\circ \pi\e$, which is a contractive projection, mapping to the subspace of piecewise-constant functions (subordinate to $\varepsilon\mathbb{Z}^{d}$) in $L^p(\mathbb{R}^d)$.  
\end{remark}

The proof of the following lemma is an uncomplicated exercise, and therefore we omit it.
\begin{lemma}\label{lemma_equivalent_conv}
Let $u\e \in L^p(\varepsilon\mathbb{Z}^{d})$ and $U \in L^p(\mathbb{R}^d)$. The following claims are equivalent:
\begin{enumerate}[(i)]
\item $u\e \rightharpoonup (\to) U$  in $L^p(\mathbb{R}^d)$.
\item $\overline{u}\e \to U $ weakly (strongly) in $L^p(\mathbb{R}^d)$.
\item $\widehat{u}\e \to U $ weakly (strongly) in $L^p(\mathbb{R}^d)$.
\end{enumerate}
\end{lemma}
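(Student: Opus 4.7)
The plan is to establish (i)$\Leftrightarrow$(ii) via the isometric identification $\overline{(\cdot)}$ and then (ii)$\Leftrightarrow$(iii) via a standard finite-element interpolation bound together with a density reduction to smooth data. In both equivalences the strong case is extracted from the weak case by means of the relevant norm identities.

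For (i)$\Leftrightarrow$(ii) the starting point is the exact identity
\[
\int_{\mathbb{R}^d}\overline{u}_\varepsilon(x)\,\eta(x)\,dx=\int_{\varepsilon\mathbb{Z}^{d}}u_\varepsilon(x)\,(\pi_\varepsilon\eta)(x)\,dm_\varepsilon(x),
\]
obtained by unfolding the Lebesgue integral into a sum over the cells $y+\varepsilon\Box$. For $\eta\in C_c^\infty(\mathbb{R}^d)$, uniform continuity yields $\|\eta-\pi_\varepsilon\eta\|_{L^q(\varepsilon\mathbb{Z}^{d})}=O(\varepsilon)$ on the uniformly bounded effective support, so H\"older combined with the isometry $\|\overline{u}_\varepsilon\|_{L^p(\mathbb{R}^d)}=\|u_\varepsilon\|_{L^p(\varepsilon\mathbb{Z}^{d})}$ transfers the weak convergence in either direction. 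The strong equivalence then follows immediately from the norm identity.

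For (ii)$\Leftrightarrow$(iii), I first establish the uniform bound $\|\widehat{u}_\varepsilon\|_{L^p(\mathbb{R}^d)}\le C_d\|u_\varepsilon\|_{L^p(\varepsilon\mathbb{Z}^{d})}$: on each simplex of the triangulation $\widehat{u}_\varepsilon$ is a convex combination of the nodal values of $u_\varepsilon$, so Jensen's inequality applied to $|\cdot|^p$ yields the claim. For weak equivalence, I expand $\int_{\mathbb{R}^d}\widehat{u}_\varepsilon\eta\,dx=\sum_{y\in\varepsilon\mathbb{Z}^{d}}u_\varepsilon(y)\int_{\mathbb{R}^d}\phi_y\eta\,dx$, with $\phi_y$ the piecewise-affine nodal basis function at $y$, and compare with $\int_{\mathbb{R}^d}\overline{u}_\varepsilon\eta\,dx=\sum_y u_\varepsilon(y)\int_{y+\varepsilon\Box}\eta\,dx$. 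For smooth $\eta$, both weights agree with $\varepsilon^d\eta(y)$ up to an error $O(\varepsilon^{d+1}\|\nabla\eta\|_\infty)$; summing over the $O(\varepsilon^{-d})$ nodes in the support and applying H\"older yields $\int(\widehat{u}_\varepsilon-\overline{u}_\varepsilon)\eta\,dx\to 0$.

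For the strong case I use density. For $U^\delta\in C_c^\infty(\mathbb{R}^d)$ approximating $U$ in $L^p$ and $u^\delta_\varepsilon:=\pi_\varepsilon U^\delta$, uniform continuity of $U^\delta$ gives $\widehat{u^\delta_\varepsilon}\to U^\delta$ strongly as $\varepsilon\to 0$. Writing
\[
\|\widehat{u}_\varepsilon-U\|_p\le \|\widehat{u}_\varepsilon-\widehat{u^\delta_\varepsilon}\|_p+\|\widehat{u^\delta_\varepsilon}-U^\delta\|_p+\|U^\delta-U\|_p,
\]
bounding the first term by $C_d\|u_\varepsilon-u^\delta_\varepsilon\|_{L^p(\varepsilon\mathbb{Z}^{d})}=C_d\|\overline{u}_\varepsilon-\overline{u^\delta_\varepsilon}\|_p$, and sending $\varepsilon\to 0$ first (using the strong convergence $\overline{u}_\varepsilon\to U$ furnished by (ii) and $\overline{u^\delta_\varepsilon}\to U^\delta$) followed by $\delta\to 0$ yields $\widehat{u}_\varepsilon\to U$ strongly; the reverse implication is symmetric. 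The only subtlety is that pointwise $|\widehat{u}_\varepsilon-\overline{u}_\varepsilon|$ is governed by the discrete gradient $\varepsilon|\nabla^\varepsilon u_\varepsilon|$, which is not assumed bounded, so one cannot hope for $\|\widehat{u}_\varepsilon-\overline{u}_\varepsilon\|_p\to 0$ in general; the density reduction to smooth data is precisely the device that sidesteps this obstacle.
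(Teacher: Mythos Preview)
Your argument is correct. The paper omits the proof entirely (``an uncomplicated exercise''), so there is no approach to compare against; what you have supplied is a clean and complete justification.

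One small point worth making explicit: in the direction (iii)$\Rightarrow$(ii), both for weak convergence (to infer boundedness of $\overline{u}_\varepsilon$ from that of $\widehat{u}_\varepsilon$) and for the ``symmetric'' strong argument, you tacitly rely on the reverse estimate $\|u_\varepsilon\|_{L^p(\varepsilon\mathbb{Z}^{d})}\le C_d\,\|\widehat{u}_\varepsilon\|_{L^p(\mathbb{R}^d)}$. This holds by the same mechanism as your forward bound: on a reference simplex the map from nodal values to the affine interpolant is a linear isomorphism between finite-dimensional spaces, so the $\ell^p$-norm of the nodal values and the $L^p$-norm of the interpolant are equivalent with constants depending only on $d$ and $p$; scaling and summing over simplices (each node lies in boundedly many) yields the two-sided estimate. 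With this stated, your claim of symmetry is fully justified. Your closing remark, that $\|\widehat{u}_\varepsilon-\overline{u}_\varepsilon\|_{L^p}\to 0$ cannot be expected without gradient control and that the density reduction is what circumvents this, is exactly the right diagnosis.
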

The applications involve problems with homogeneous Dirichlet boundary conditions, and therefore the following space is convenient: For $O\subset \mathbb{R}^d$ we set
\begin{equation*}
L^p_0(O\cap \varepsilon\mathbb{Z}^{d} )=\cb{u \in L^p(\varepsilon\mathbb{Z}^{d}): u=0 \text{ in }\varepsilon\mathbb{Z}^{d} \setminus \brac{O\cap \varepsilon\mathbb{Z}^{d}}}.
\end{equation*}
\subsection{Description of random media and a differential calculus for random variables}\label{section:297}
As is standard in stochastic homogenization, we describe \textit{random configurations} (e.g. coefficients of a PDE or energy densities that describe properties of some medium with quenched disorder) using a probability space $\brac{\Omega,\mathcal{F},P}$ together with a measure preserving dynamical system $T_x:\Omega\rightarrow \Omega$ ($x\in \mathbb{Z}^d$) such that:
\begin{enumerate}[(i)]
\item $T_x:\Omega\rightarrow\Omega$ is measurable for all $x \in \mathbb{Z}^d$,
\item $T_0=Id$ and $T_{x+y}=T_x \circ T_y$ for all $x,y\in \mathbb{Z}^d$,
\item $P(T_x A)=P(A)$ for all $A\in \mathcal{F}$ and $x\in \mathbb{Z}^d$.
\end{enumerate} 
We write $\expect{\cdot}$ for the expectation and $L^p(\Omega)$ for the usual Banach space of $p$-integrable random variables. Throughout the paper we assume that $(\Omega, \mathcal F, P,T)$ satisfies the properties above, and that $\brac{\Omega,\mathcal{F},P}$ is a separable measure space; the latter implies the separability of $L^p(\Omega)$.

The dynamical system $T$ is called \textit{ergodic} (we also say $\expect{\cdot}$ is ergodic), if for any $A\in\mathcal F$ the following implication holds:
\begin{align*}
  &\text{$A$ is shift invariant, i.e.,  }T_xA=A\text{ for all }x\in\mathbb Z^d\\
  &\Rightarrow\qquad P(A)\in\{0,1\}.
\end{align*} 

\begin{remark}
A multiparameter version of Birkhoff's ergodic theorem (see \cite[Theorem 2.4]{akcoglu1981ergodic}) states that if $\expect{\cdot}$ is ergodic and $\varphi \in L^p(\Omega)$, then
\begin{align*}
\lim_{R\to \infty}\frac{1}{|B_R|}\sum_{x\in B_R\cap \mathbb{Z}^d}\varphi(T_x \omega)\to \expect{\varphi} \quad \text{for P-a.e. }\omega\in \Omega.
\end{align*}
\end{remark}

\begin{example}
Let $(\Omega_0,\mathcal F_0,P_0)$ denote a separable probability space. We define $(\Omega,\mathcal F,P)$ as the $\mathbb Z^d$-fold product of $(\Omega_0,\mathcal F_0,P_0)$, i.e.,   $\Omega:=\Omega_0^{\mathbb Z^d}$, $\mathcal F=\otimes_{\mathbb Z^d}\mathcal F_0$, $P=\otimes_{\mathbb Z^d}P_0$. Note that a configuration $\omega\in\Omega$ can be seen as a function $\omega:\mathbb Z^d\to\Omega_0$. We define the shift $T_x$ ($x\in\mathbb Z^d$) as
  \begin{equation*}
    T_x\omega(\cdot):=\omega(\cdot+x).
  \end{equation*}
It follows that $(\Omega,\mathcal F,P,T)$ satisfies the assumptions above and defines an ergodic dynamical system. With regard to the example in the introduction, \eqref{eq2}, we might consider random potentials of the form
  \begin{equation*}
    V(\omega,F):=a_0(\omega(0))|F|^2,
  \end{equation*}
  where $a_0:\Omega_0\to(\lambda,1)$ is a random variable, with $\lambda>0$ denoting a positive constant of ellipticity. We remark that the coefficients appearing in the corresponding energy $\cb{a_0(T_{\frac{x}{\varepsilon}}\omega(0))}_{x\in \varepsilon\mathbb{Z}^{d}}$ are independent and identically distributed random variables. 
\end{example}

For  $\varphi: \Omega\rightarrow \mathbb{R}$ and $\mathbb{\psi}=(\psi_1,...,\psi_d):\Omega\rightarrow \mathbb{R}^d$ measurable, we introduce the \textit{horizontal derivative} $D$ and \textit{(negative) horizontal divergence} $D^*$:
\begin{align*}
 D_i\varphi(\omega)&=\varphi(T_{e_i}\omega)-\varphi(\omega), & D_i^*\varphi(\omega)&=\varphi(T_{-e_i}\omega)-\varphi(\omega),\\
 D\varphi(\omega)&=(D_1\varphi(\omega),...,D_d\varphi(\omega)), & D^*\mathbf{\psi}(\omega)&=\sum_{i=1}^{d}D^*_i\psi_i(\omega).
\end{align*}
\begin{remark} 
$D:L^p(\Omega)\rightarrow L^p(\Omega)^d$ and $D^*: L^p(\Omega)^d \rightarrow L^p(\Omega)$ are linear and bounded operators. Furthermore, for any $\varphi \in L^p(\Omega)$ and $\psi \in L^q(\Omega)^d$ the integration by parts formula
\begin{equation*}
  \expect{D\varphi \cdot \psi}=\expect{\phi D^*\psi}
\end{equation*}
holds. Hence, $D$ (defined on $L^p(\Omega)$) and $D^*$ (defined on $L^q(\Omega)^d$) are adjoint operators.
\end{remark}

We denote the set of \textit{shift-invariant functions} in $L^p(\Omega)$ by
\begin{equation*}
L^p_{\mathsf{inv}}(\Omega):=\left\lbrace \varphi \in L^p(\Omega): \varphi(T_x\omega)=\varphi(\omega) \text{ for all } x\in \mathbb{Z}^d \text{ and a.e. }\omega\in \Omega   \right\rbrace,
\end{equation*}
and we note that $L^p_{\mathsf{inv}}(\Omega)\simeq \mathbb{R}$ if and only if $\expect{\cdot} \text{ is ergodic}$. We denote by $P_{\mathsf{inv}}:L^p(\Omega)\rightarrow L^p_{\mathsf{inv}}(\Omega)$ the conditional expectation w.r.t. the $\sigma$-algebra generated by the family of shift invariant sets $\cb{A\in \mathcal{F}: T_xA=A \; \text{for every }x\in \mathbb{Z}^d}$. It is a contractive projection and in the ergodic case we simply have $P_{\mathsf{inv}}f=\expect{f}$. The adjoint of $P_{\mathsf{inv}}$ is denoted by $P_{\mathsf{inv}}^*:L^q(\Omega)\to L^q(\Omega)$.

It is easily verified that $L^p_{\mathsf{inv}}(\Omega)=\mathsf{ker}D$ and by standard arguments (see \cite[Section 2.6]{brezis2010functional}) we have the
 orthogonality relations
\begin{equation*}
 \mathbf{L}^p_{\mathsf{pot}}(\Omega):=\overline{\mathsf{ran}D}^{L^p(\Omega)^d}=(\mathsf{ker}D^*)^{\bot},\quad \quad \quad \quad
 \mathsf{ker}D= (\mathsf{ran}D^*)^{\bot}.
\end{equation*}
The above relations hold in the sense of $L^p$-$L^q$ duality (we identify $L^q(\Omega)'$ with $L^p(\Omega)$). Namely, $D:L^p(\Omega)\rightarrow L^p(\Omega)^d$ and $D^*:L^q(\Omega)^d\rightarrow L^q(\Omega)$ and the orthogonal of a set $A\subset L^q(\Omega)$ is given by 
\begin{equation*}
A^{\bot}=\cb{\varphi \in L^q(\Omega)': \langle \varphi,\psi \rangle_{(L^q)',L^q}=0 \text{ for all } \psi \in A}.
\end{equation*}

In this paper measurable functions defined on $\Omega\times \varepsilon\mathbb{Z}^{d} $ or on $\Omega \times \mathbb{R}^d $ are called random fields. We mainly consider the space of $p$-integrable random fields $L^p(\Omega \times \varepsilon\mathbb{Z}^{d})$, and frequently use the following notation: If $X\subset L^p(\Omega)$ and $Y\subset L^p(\varepsilon\mathbb{Z}^{d})$ (resp. $Y\subset L^p(\mathbb{R}^d)$) are linear subspaces, then we denote by $X\otimes Y$ the closure of 
\begin{equation*}
  X\stackrel{a}{\otimes} Y:=\operatorname{span}\cb{\sum_j \varphi_j \eta_j: \varphi_j\in X,\eta_j\in Y}
  \end{equation*}
in $L^p(\Omega\times\varepsilon\mathbb{Z}^{d})$ (resp. $L^p(\Omega\times\mathbb{R}^d)$). In particular, since $L^p(\Omega)$ is separable (thanks to our assumption on the underlying measure space), we have $L^p(\Omega)\otimes L^p(\varepsilon\mathbb{Z}^{d})=L^p(\Omega\times \varepsilon\mathbb{Z}^{d})$ (resp. $L^p(\Omega)\otimes L^p(\mathbb{R}^d)=L^p(\Omega\times \mathbb{R}^d)$). Similarly, if above we instead have $Y\subset W^{1,p}(\mathbb{R}^d)$ is a linear subspace, then $X\otimes Y$ is defined as the closure of $X\overset{a}{\otimes}Y$ in $L^p(\Omega, W^{1,p}(\mathbb{R}^d))$. In this respect, we tacitly identify linear and bounded operators on $X$ (or $Y$) by their obvious extension to $X\otimes Y$. 
\section{Stochastic unfolding}\label{section:3}
\subsection{Definition and properties}\label{section31}
For $u: \Omega\times \varepsilon\mathbb{Z}^{d}\rightarrow \mathbb{R}$ we define the \textit{unfolding} of $u$ via
\begin{equation*}
(\widetilde{\mathcal{T}}_{\varepsilon}u)(\omega,x)=u(T_{ - \frac{x}{\varepsilon}}\omega,x).
\end{equation*}
The above expression defines an isometric isomorphism $\widetilde{\mathcal{T}_{\varepsilon}}: L^p(\Omega\times \varepsilon\mathbb{Z}^{d})\rightarrow  L^p(\Omega\times\varepsilon\mathbb{Z}^{d})$. For our purposes, it is convenient to consider $\mathcal{T}_{\varepsilon}:=\overline{(\cdot)}\circ \widetilde{\mathcal{T}}_{\varepsilon}:L^p\brac{\Omega\times \varepsilon\mathbb{Z}^{d}}\rightarrow L^p\brac{\Omega\times \mathbb{R}^d}$, which is a linear (nonsurjective) isometry. We call both operators $\widetilde{\mathcal{T}_{\varepsilon}}$ and $\mathcal{T}_{\varepsilon}$ \textit{stochastic unfolding operators}. Note that $\widetilde{\mathcal{T}}\e$ (defined on $L^p$) and $\widetilde{\mathcal{T}}\e^{-1}$ (given by $\widetilde{\mathcal{T}}\e^{-1}v(\omega,x)=v(T_{\frac{x}{\varepsilon}}\omega,x)$ for $v\in L^q(\Omega\times \varepsilon\mathbb{Z}^{d})$) are adjoint operators.
\begin{definition}[Two-scale convergence in the mean]\label{definitionUnf} We say that a sequence $u_{\varepsilon}\in L^{p}(\Omega \times \varepsilon \mathbb{Z}^d)$ strongly (weakly) stochastically two-scale converges in the mean to $U\in L^{p}(\Omega\times \mathbb{R}^d)$ if 
\begin{equation*}
\mathcal{T}_{\varepsilon} u_{\varepsilon}\rightarrow U \quad \text{strongly (weakly) in } L^{p}(\Omega\times \mathbb{R}^d),
\end{equation*}
and we use the notation  $u_{\varepsilon}\overset{2}{\rightarrow} u$ ($u_{\varepsilon}\overset{2}{\rightharpoonup} u$) in $L^{p}(\Omega\times \mathbb{R}^d)$. For vector-valued functions, the convergence is defined componentwise.
\end{definition}
\begin{remark}\label{remark1} Note that the adaptation of the two-scale convergence in the mean from \cite{bourgeat1994stochastic,andrews1998stochastic} to the discrete setting reads as $u_{\varepsilon}\in L^{p}(\Omega \times \varepsilon \mathbb{Z}^d)$ stochastically two-scale converges in the mean to $U\in L^{p}(\Omega\times \mathbb{R}^d)$ if $\limsup_{\varepsilon\to 0} \expect{\int_{\varepsilon\mathbb{Z}^{d}}(u_{\varepsilon}(\omega,x))^pdm_{\varepsilon}(x)}<\infty$ and 
\begin{equation*}
\lim_{\varepsilon\rightarrow 0}\expect{\int_{\varepsilon\mathbb{Z}^{d}}u_{\varepsilon}(\omega,x)\varphi(T_{\frac{x}{\varepsilon}}\omega)\eta(x)dm_{\varepsilon}(x)}=\expect{\int_{\mathbb{R}^d}U(\omega,x)\varphi(\omega)\eta(x)dx}
\end{equation*}
for all $\varphi\in L^q(\Omega)$ and all $\eta \in C_c^{\infty}(\mathbb{R}^d)$. This notion is equivalent to our notion of weak stochastic two-scale convergence in the mean.
\end{remark}

The following lemma is obtained easily by exploiting the fact that the unfolding is a linear isometry and by the usual properties of weak convergence, and therefore we omit its proof. 
\begin{lemma}[Basic facts]\label{basicfacts}
We consider sequences $u_{\varepsilon}\in L^{p}(\Omega \times \varepsilon \mathbb{Z}^d)$ and $v\e\in L^q(\Omega\times \varepsilon\mathbb{Z}^{d})$.
\begin{enumerate}[(i)]
\item If $u_{\varepsilon} \overset{2}{\rightharpoonup} U$ in $L^p(\Omega\times \mathbb{R}^d)$, then 
\begin{equation*}
\sup_{\varepsilon\in (0,1)}\|u_{\varepsilon}\|_{L^{p}(\Omega \times \varepsilon \mathbb{Z}^d)}<\infty \quad \text{and} \quad \|U\|_{L^{p}(\Omega\times \mathbb{R}^d)}\leq \liminf_{\varepsilon\rightarrow 0}\|u\e\|_{L^{p}(\Omega \times \varepsilon \mathbb{Z}^d)}.
\end{equation*}
\item If $\limsup_{\varepsilon\rightarrow 0}\|u_{\varepsilon}\|_{L^{p}(\Omega \times \varepsilon \mathbb{Z}^d)}<\infty$, then there exist $U\in L^{p}(\Omega\times \mathbb{R}^d)$ and a subsequence $\varepsilon'$ such that $u_{\varepsilon'}\overset{2}{\rightharpoonup} U$ in $L^{p}(\Omega\times \mathbb{R}^d)$.
\item $u_{\varepsilon}\overset{2}{\rightarrow} U$ in $L^{p}(\Omega\times \mathbb{R}^d)$ if and only if $\lim_{\varepsilon\to 0}\|u_{\varepsilon}\|_{L^{p}(\Omega \times \varepsilon \mathbb{Z}^d)} = \|u \|_{L^{p}(\Omega\times \mathbb{R}^d)}$ and $u_{\varepsilon} \overset{2}{\rightharpoonup} U$ in $L^{p}(\Omega\times \mathbb{R}^d)$.
\item If $u_{\varepsilon}\overset{2}{\rightarrow} U$ in $L^{p}(\Omega\times \mathbb{R}^d)$ and $v_{\varepsilon}\overset{2}{\rightharpoonup} V$ in $L^{q}(\Omega\times \mathbb{R}^d)$, then
\begin{equation*}
\lim_{\varepsilon\rightarrow 0}\expect{\int_{\varepsilon\mathbb{Z}^{d}} u_{\varepsilon}(\omega,x)v_{\varepsilon}(\omega,x)dm_{\varepsilon}(x)}=\expect{\int_{\mathbb{R}^d}U(\omega,x)V(\omega,x)dx}.
\end{equation*}
\end{enumerate}
\end{lemma}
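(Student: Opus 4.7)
The central observation is that $\mathcal{T}_{\varepsilon}:L^p(\Omega\times\varepsilon\mathbb{Z}^{d})\to L^p(\Omega\times\mathbb{R}^d)$ is a linear isometry, as stated in the paragraph preceding Definition~\ref{definitionUnf}. This reduces every item to a standard Banach-space fact about weak/strong convergence, applied to the unfolded sequence in $L^p(\Omega\times\mathbb{R}^d)$, combined with the isometry identity $\|u\e\|_{L^p(\Omega\times\varepsilon\mathbb{Z}^{d})}=\|\mathcal{T}\e u\e\|_{L^p(\Omega\times\mathbb{R}^d)}$. I would treat the four items in order.

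For (i), weakly convergent sequences in a Banach space are norm-bounded and the norm is weakly lower semicontinuous; transferring these facts through the isometry gives both conclusions. For (ii), since $p\in(1,\infty)$, the space $L^p(\Omega\times\mathbb{R}^d)$ is reflexive, so by the isometry the unfolded sequence $\mathcal{T}\e u\e$ is bounded, and the Banach--Alaoglu / Eberlein--\v{S}mulian theorem yields a weakly convergent subsequence, which by definition corresponds to $u_{\varepsilon'}\overset{2}{\rightharpoonup}U$. For (iii), the Radon--Riesz property of uniformly convex spaces (which applies to $L^p$ with $p\in(1,\infty)$) states that weak convergence together with convergence of norms implies strong convergence; this immediately gives the equivalence once we use the isometry to translate the norm condition.

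The only item with a small twist is (iv), which requires an explicit duality identity between the physical-space pairing and the unfolded-space pairing. The plan is to verify the transformation formula
\begin{equation*}
  \expect{\int_{\varepsilon\mathbb{Z}^{d}}u\e(\omega,x)v\e(\omega,x)\,dm\e(x)}=\expect{\int_{\mathbb{R}^d}\mathcal{T}\e u\e(\omega,x)\,\mathcal{T}\e v\e(\omega,x)\,dx},
\end{equation*}
which I would justify in two steps: first, the $\mathbb Z^d$-stationarity of $P$ gives $\expect{(\widetilde{\mathcal T}\e u\e)(\cdot,x)(\widetilde{\mathcal T}\e v\e)(\cdot,x)}=\expect{u\e(\cdot,x)v\e(\cdot,x)}$ pointwise in $x\in\varepsilon\mathbb{Z}^{d}$ via the change of variables $\omega\mapsto T_{x/\varepsilon}\omega$; second, the piecewise-constant extension $\overline{(\cdot)}$ preserves both the $m\e$-integral and products (since both factors are constant on the same $\varepsilon$-cells), so one may replace $\widetilde{\mathcal T}\e$ by $\mathcal T\e$ and $\int_{\varepsilon\mathbb{Z}^{d}}\cdot\,dm\e$ by $\int_{\mathbb{R}^d}\cdot\,dx$. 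Once this identity is in place, the conclusion of (iv) is the standard fact that a strong-weak $L^p$-$L^q$ duality pairing passes to the limit. I do not foresee any real obstacle; the main substantive ingredient is the transformation formula in (iv), and the argument for it is entirely routine given the definitions of $\widetilde{\mathcal T}\e$ and $\mathcal T\e=\overline{(\cdot)}\circ\widetilde{\mathcal T}\e$.
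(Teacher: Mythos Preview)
Your proposal is correct and matches exactly the approach the paper indicates: the paper omits the proof entirely, stating only that it ``is obtained easily by exploiting the fact that the unfolding is a linear isometry and by the usual properties of weak convergence.'' Your treatment of (iv) via the transformation formula is precisely the computation the authors have in mind (it is the same identity they highlight in the introduction), and your justification through stationarity of $P$ and the piecewise-constant extension is the standard and correct one.
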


As in the periodic setting, a suitable ``inverse'' of the unfolding operator $\mathcal T\e$ is given by the linear operator
\begin{equation*}
  \mathcal{F}_{\varepsilon}:L^{p}(\Omega\times \mathbb{R}^d) \rightarrow L^{p}(\Omega \times \varepsilon \mathbb{Z}^d), \quad \mathcal{F}_{\varepsilon}=\widetilde{\mathcal{T}}^{-1}_{\varepsilon}\circ\pi_{\varepsilon}.
\end{equation*}
In analogy to the periodic case, we refer to $\mathcal F\e$ as the \textit{stochastic folding operator}. Note that $\mathcal F\e: L^{p}(\Omega\times \mathbb{R}^d) \rightarrow L^{p}(\Omega \times \varepsilon \mathbb{Z}^d)$ is precisely the adjoint of $\mathcal T\e:L^{q}(\Omega\times \varepsilon\mathbb{Z}^d) \rightarrow L^{q}(\Omega \times \mathbb{R}^d)$ (where $\frac1p+\frac1q=1$).
\begin{lemma}\label{Propfold} 
$\mathcal{F}_{\varepsilon}$ is linear and it satisfies:
\begin{enumerate}[(i)]
\item $\| \mathcal{F}_{\varepsilon}U \|_{L^{p}(\Omega \times \varepsilon \mathbb{Z}^d)}\leq \|U \|_{L^{p}(\Omega\times \mathbb{R}^d)}$ for every $U\in L^{p}(\Omega\times \mathbb{R}^d)$.
\item $\mathcal{F}_{\varepsilon} \circ \mathcal{T}_{\varepsilon}=Id$ on $L^{p}(\Omega \times \varepsilon \mathbb{Z}^d)$ and $\mathcal{T}_{\varepsilon} \circ \mathcal{F}_{\varepsilon}=\overline{\pi}_{\varepsilon}$ on $L^{p}(\Omega\times \mathbb{R}^d)$.
\item $\mathcal{F}_{\varepsilon}U\overset{2}{\rightarrow} U$ in $L^{p}(\Omega\times \mathbb{R}^d)$ for every $U \in L^{p}(\Omega\times \mathbb{R}^d)$. 
\end{enumerate}
\end{lemma}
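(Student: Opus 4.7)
My plan is to dispatch the three items in the order stated, since each one feeds into the next.

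For part (i), I would just chain together two facts already recorded in the excerpt: $\widetilde{\mathcal{T}}_\varepsilon$ is a linear isometric isomorphism of $L^p(\Omega\times\varepsilon\mathbb{Z}^{d})$ (so its inverse is again isometric), and the discretization $\pi_\varepsilon$ is a contraction from $L^p(\mathbb R^d)$ to $L^p(\varepsilon\mathbb{Z}^{d})$ with norm $\leq 1$. Fubini lets me lift the latter to a contraction from $L^p(\Omega\times\mathbb R^d)$ to $L^p(\Omega\times\varepsilon\mathbb{Z}^{d})$, after which composing gives $\|\mathcal{F}_\varepsilon U\|=\|\widetilde{\mathcal{T}}_\varepsilon^{-1}\pi_\varepsilon U\|=\|\pi_\varepsilon U\|\leq\|U\|$.

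For part (ii), the two identities are formal manipulations with the definitions $\mathcal{T}_\varepsilon=\overline{(\cdot)}\circ\widetilde{\mathcal{T}}_\varepsilon$ and $\mathcal{F}_\varepsilon=\widetilde{\mathcal{T}}_\varepsilon^{-1}\circ\pi_\varepsilon$, combined with the relations $\pi_\varepsilon\circ\overline{(\cdot)}=\mathrm{Id}$ and $\overline{\pi}_\varepsilon=\overline{(\cdot)}\circ\pi_\varepsilon$ from the remark preceding Lemma \ref{lemma_equivalent_conv}. Explicitly,
\begin{equation*}
\mathcal{F}_\varepsilon\circ\mathcal{T}_\varepsilon=\widetilde{\mathcal{T}}_\varepsilon^{-1}\circ\pi_\varepsilon\circ\overline{(\cdot)}\circ\widetilde{\mathcal{T}}_\varepsilon=\widetilde{\mathcal{T}}_\varepsilon^{-1}\circ\widetilde{\mathcal{T}}_\varepsilon=\mathrm{Id},
\end{equation*}
and analogously $\mathcal{T}_\varepsilon\circ\mathcal{F}_\varepsilon=\overline{(\cdot)}\circ\pi_\varepsilon=\overline{\pi}_\varepsilon$.

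For part (iii), by definition of strong stochastic two-scale convergence in the mean I must show $\mathcal{T}_\varepsilon\mathcal{F}_\varepsilon U\to U$ strongly in $L^p(\Omega\times\mathbb R^d)$. Thanks to (ii) this reduces to $\overline{\pi}_\varepsilon U\to U$ in $L^p(\Omega\times\mathbb R^d)$. I would prove this by a density argument. First, on simple tensors $U(\omega,x)=\varphi(\omega)\eta(x)$ with $\varphi\in L^p(\Omega)$ and $\eta\in C_c(\mathbb R^d)$, one has $\overline{\pi}_\varepsilon U=\varphi\cdot\overline{\pi}_\varepsilon\eta$, and since $\eta$ is uniformly continuous with compact support, $\overline{\pi}_\varepsilon\eta\to\eta$ in $L^p(\mathbb R^d)$ by an elementary computation; Fubini then gives convergence in $L^p(\Omega\times\mathbb R^d)$. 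By linearity this extends to the dense subspace $L^p(\Omega)\stackrel{a}{\otimes}C_c(\mathbb R^d)$ of $L^p(\Omega\times\mathbb R^d)$ (density uses separability of $L^p(\Omega)$, assumed in Section \ref{section:297}). Finally, for arbitrary $U\in L^p(\Omega\times\mathbb R^d)$ and $\eta>0$, pick $V$ in this dense subspace with $\|U-V\|\leq\eta$ and estimate
\begin{equation*}
\|\overline{\pi}_\varepsilon U-U\|\leq\|\overline{\pi}_\varepsilon(U-V)\|+\|\overline{\pi}_\varepsilon V-V\|+\|V-U\|\leq 2\eta+\|\overline{\pi}_\varepsilon V-V\|,
\end{equation*}
using that $\overline{\pi}_\varepsilon$ is a contractive projection. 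Letting $\varepsilon\to 0$ and then $\eta\to 0$ yields the claim.

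The only non-mechanical step is (iii), and even there the main point is just selecting a tensor-product dense subspace on which the $L^p$-convergence of piecewise-constant approximations is transparent; the contractivity of $\overline{\pi}_\varepsilon$ from (i) (applied with $\mathcal{T}_\varepsilon$ an isometry) then closes the density argument without difficulty.
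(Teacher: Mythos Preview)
Your proof is correct and is precisely the kind of argument the paper has in mind: the authors omit the proof entirely, stating only that it ``mostly relies on the definition of the folding operator.'' Your treatment of (i) and (ii) is the straightforward unwinding of definitions, and your density argument for (iii) via $\mathcal{T}_\varepsilon\mathcal{F}_\varepsilon=\overline{\pi}_\varepsilon$ and the contractivity of $\overline{\pi}_\varepsilon$ is the standard way to complete the omitted step.
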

The proof of this lemma is omitted since it mostly relies on the definition of the folding operator.
\subsection{Two-scale limits of gradients}\label{gradients}
In this section, we treat two-scale limits of gradients. First we present some compactness results and later we show that weak two-scale limits can be recovered in the strong two-scale sense by convenient linear constructions.
\begin{proposition}[Compactness]\label{comp3}
Let $\gamma\geq 0$. Let $u_{\varepsilon}\in L^{p}(\Omega \times \varepsilon \mathbb{Z}^d)$ satisfy
\begin{equation}\label{comp3:bound}
  \limsup_{\varepsilon\rightarrow 0}\left\langle \int_{\varepsilon\mathbb{Z}^{d}} |u_{\varepsilon}(\omega,x)|^p+\varepsilon^{\gamma p}|\nabla^{\varepsilon}u_{\varepsilon}(\omega,x)|^p dm_{\varepsilon}(x)\right\rangle<\infty.
\end{equation}
\begin{enumerate}[(i)]
\item If $\gamma=0$, there exist $U \in L^p_{\mathsf{inv}}(\Omega)\otimes W^{1,p}(\mathbb{R}^d)$ and $\chi \in \mathbf{L}^p_{\mathsf{pot}}(\Omega)\otimes L^p(\mathbb{R}^d)$ such that, up to a subsequence,
\begin{equation*}\label{er12}
u_{\varepsilon}\overset{2}{\rightharpoonup} U  \text{ in }L^{p}(\Omega\times \mathbb{R}^d), \quad \nabla^{\varepsilon}u_{\varepsilon} \overset{2}{\rightharpoonup} \nabla U+\chi \text{ in } L^{p}(\Omega\times \mathbb{R}^d)^d.
\end{equation*} 
\item If $\gamma\in (0,1)$, there exist $U \in L^p_{\mathsf{inv}}(\Omega)\otimes L^p(\mathbb{R}^d)$ and $\chi \in \mathbf{L}^p_{\mathsf{pot}}(\Omega)\otimes L^p(\mathbb{R}^d)$ such that, up to a subsequence,
\begin{equation*}\label{er121}
u_{\varepsilon}\overset{2}{\rightharpoonup} U  \text{ in }L^{p}(\Omega\times \mathbb{R}^d), \quad \varepsilon^{\gamma} \nabla^{\varepsilon}u_{\varepsilon} \overset{2}{\rightharpoonup} \chi \text{ in } L^{p}(\Omega\times \mathbb{R}^d)^d.
\end{equation*}
\item If $\gamma=1$, there exists $U \in L^{p}(\Omega\times \mathbb{R}^d)$ such that, up to a subsequence,
\begin{equation*}\label{er122}
u_{\varepsilon}\overset{2}{\rightharpoonup} U  \text{ in }L^{p}(\Omega\times \mathbb{R}^d), \quad \varepsilon \nabla^{\varepsilon}u_{\varepsilon} \overset{2}{\rightharpoonup}  DU \text{ in } L^{p}(\Omega\times \mathbb{R}^d)^d.
\end{equation*}
\item If $\gamma>1$, there exists $U \in L^{p}(\Omega\times \mathbb{R}^d)$ such that, up to a subsequence,
\begin{equation*}\label{er123}
u_{\varepsilon}\overset{2}{\rightharpoonup} U  \text{ in }L^{p}(\Omega\times \mathbb{R}^d), \quad \varepsilon^{\gamma} \nabla^{\varepsilon}u_{\varepsilon} \overset{2}{\rightharpoonup}  0 \text{ in } L^{p}(\Omega\times \mathbb{R}^d)^d.
\end{equation*}
\end{enumerate}
\end{proposition}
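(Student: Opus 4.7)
\emph{Extraction.} First I would use that $\mathcal T_\varepsilon:L^p(\Omega\times\varepsilon\mathbb{Z}^{d})\to L^p(\Omega\times\mathbb{R}^d)$ is a linear isometry, so the hypothesis (\ref{comp3:bound}) gives uniform $L^p$-bounds for $\mathcal T_\varepsilon u_\varepsilon$ and $\varepsilon^{\gamma}\mathcal T_\varepsilon\nabla^{\varepsilon}u_\varepsilon$; Lemma~\ref{basicfacts}(ii) then extracts a common subsequence along which $u_\varepsilon\overset{2}{\rightharpoonup}U\in L^p(\Omega\times\mathbb{R}^d)$ and $\varepsilon^{\gamma}\nabla^{\varepsilon}u_\varepsilon\overset{2}{\rightharpoonup}\xi\in L^p(\Omega\times\mathbb{R}^d)^d$. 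The remaining task is to characterize $U$ and $\xi$ in each of the four regimes.

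\emph{Key commutation identity.} The heart of the argument, which I would establish first, is the pointwise identity
\[
\mathcal T_\varepsilon u_\varepsilon(T_{e_i}\omega,\,x+\varepsilon e_i)-\mathcal T_\varepsilon u_\varepsilon(\omega,x)=\varepsilon\,\mathcal T_\varepsilon\nabla^{\varepsilon}_i u_\varepsilon(\omega,x),
\]
verified by noting that, for $y=\lfloor x\rfloor_\varepsilon$, both sides equal $u_\varepsilon(T_{-y/\varepsilon}\omega,\,y+\varepsilon e_i)-u_\varepsilon(T_{-y/\varepsilon}\omega,\,y)$. Pairing against $\varphi(\omega)\eta(x)$ with $\varphi\in L^q(\Omega),\,\eta\in C_c^\infty(\mathbb{R}^d)$, I would transfer $T_{e_i}$ onto $\varphi$ by translation invariance of $\expect{\cdot}$ (producing $D^*_i\varphi$) and absorb the $\varepsilon e_i$-shift on $\eta$ as an $o(1)$ error via translation continuity; the LHS then converges to $\expect{\int U\,D^*_i\varphi\,\eta\,dx}=\expect{\int (D_iU)\,\varphi\,\eta\,dx}$ by horizontal integration by parts. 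After multiplying the identity by $\varepsilon^{\gamma-1}$, the RHS becomes $\varepsilon^{\gamma}\mathcal T_\varepsilon\nabla^{\varepsilon}_i u_\varepsilon$, with weak limit $\xi_i$. Reading this in each regime: for $\gamma\in[0,1)$ I would rescale instead by $\varepsilon^{1-\gamma}\to 0$, killing the RHS and forcing $D_iU=0$, i.e.\ $U\in L^p_{\mathsf{inv}}(\Omega)\otimes L^p(\mathbb{R}^d)$; for $\gamma=1$ the scaling is clean and delivers $\xi_i=D_iU$; for $\gamma>1$ the prefactor $\varepsilon^{\gamma-1}\to 0$ annihilates the LHS limit, so $\xi_i=0$.

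\emph{Identifying $\xi$ and the spatial regularity of $U$ for $\gamma\in[0,1)$.} Next I would test $\varepsilon^{\gamma}\nabla^{\varepsilon}u_\varepsilon$ against the stationary solenoidal field $\psi(T_{x/\varepsilon}\omega)\eta(x)$, where $\psi\in\mathsf{ker}D^*\subset L^q(\Omega)^d$ and $\eta\in C_c^\infty(\mathbb{R}^d)$. Discrete integration by parts and the discrete Leibniz rule, together with the cancellation of the singular $\varepsilon^{\gamma-1}(D^*\psi)\eta$ term (which vanishes since $D^*\psi=0$), give
\[
\varepsilon^{\gamma}\nabla^{\varepsilon,*}[\psi(T_{\cdot/\varepsilon}\omega)\eta](x)=\varepsilon^{\gamma}\sum_i\psi_i(T_{(x-\varepsilon e_i)/\varepsilon}\omega)\,\nabla^{\varepsilon,*}_i\eta(x),
\]
whose unfolding one computes explicitly as $\varepsilon^{\gamma}\sum_i\psi_i(T_{-e_i}\omega)\overline{\nabla^{\varepsilon,*}_i\eta}(x)$ and which strongly two-scale converges to $-\sum_i\psi_i(T_{-e_i}\omega)\partial_i\eta(x)$ when $\gamma=0$ (and to $0$ when $\gamma\in(0,1)$). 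By Lemma~\ref{basicfacts}(iv), this yields $\expect{\int\xi\cdot\psi\,\eta\,dx}=0$ for every such $\psi,\eta$ when $\gamma\in(0,1)$; combined with the orthogonality $\mathbf{L}^p_{\mathsf{pot}}=(\mathsf{ker}D^*)^{\bot}$ this promotes $\xi$ to $\mathbf{L}^p_{\mathsf{pot}}(\Omega)\otimes L^p(\mathbb{R}^d)$. For $\gamma=0$, I would decompose $\psi_i(T_{-e_i}\omega)=\psi_i(\omega)+D^*_i\psi_i(\omega)$ and use $D_iU=0$ (which annihilates the $D^*_i\psi_i$ piece via horizontal integration by parts) to reach $\expect{\int\xi\cdot\psi\,\eta\,dx}=-\expect{\int U\,\psi\cdot\nabla\eta\,dx}$ for all $\psi\in\mathsf{ker}D^*$. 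Specialising $\psi$ to invariants then upgrades $U$ to $L^p_{\mathsf{inv}}(\Omega)\otimes W^{1,p}(\mathbb{R}^d)$ with distributional spatial gradient $\nabla U$, and the residual relation $\expect{\int(\xi-\nabla U)\cdot\psi\,\eta\,dx}=0$ for all $\psi\in\mathsf{ker}D^*$ identifies $\xi-\nabla U\in\mathbf{L}^p_{\mathsf{pot}}(\Omega)\otimes L^p(\mathbb{R}^d)$.

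\emph{Main obstacle.} The hard step will be case $\gamma=0$, where the same integration-by-parts must simultaneously yield two structural facts: the $W^{1,p}$-regularity of the invariant random field $U$ in the spatial variable, and the representation $\xi=\nabla U+\chi$ with $\chi$ potential. This hinges on the orthogonality $(\mathsf{ker}D^*)^{\bot}=\mathbf{L}^p_{\mathsf{pot}}$ and reflects a structural feature peculiar to stochastic homogenization already flagged in the introduction: unlike in the periodic case (where Poincar\'e on the unit cell delivers an honest potential $\varphi\in W^{1,p}_{\mathsf{per}}(\Box)$), elements of $\mathbf{L}^p_{\mathsf{pot}}(\Omega)$ need not be of the form $D\varphi$ for any $\varphi\in L^p(\Omega)$.
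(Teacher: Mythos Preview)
Your proposal is correct, and the overall architecture---extracting weak two-scale limits, a commutator identity to detect shift-invariance of $U$ when $\gamma<1$, and testing against divergence-free $\psi\in\ker D^*$ to place $\xi$ (or $\xi-\nabla U$) in $\mathbf L^p_{\mathsf{pot}}(\Omega)\otimes L^p(\mathbb R^d)$---matches the paper's. The ingredients are the same: your ``key commutation identity'' is a repackaging of the paper's commutator formula~(\ref{commutator}), and your test against $\psi(T_{x/\varepsilon}\omega)\eta(x)$ with $D^*\psi=0$ is exactly the paper's choice $v_\varepsilon=\mathcal F_\varepsilon(\eta\varphi)$ with $\varphi\in\ker D^*$ in the proof of part~(i).

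There is one genuine organizational difference. For $\gamma=0$ the paper separates the $W^{1,p}$-regularity of $U$ into its own Lemma~\ref{lemma4}, which applies the invariant projection $P_{\mathsf{inv}}$ to $u_\varepsilon$ and uses the identity $P_{\mathsf{inv}}\circ\widetilde{\mathcal T}_\varepsilon=P_{\mathsf{inv}}$; only afterwards does it identify $\chi=\xi-\nabla U$ by the $\ker D^*$ test. You instead extract \emph{both} conclusions from a single test: the relation $\expect{\int\xi\cdot\psi\,\eta}=-\expect{\int U\,\psi\cdot\nabla\eta}$ for all $\psi\in\ker D^*$, specialized to invariant $\psi$, yields the weak spatial gradient, and then the full relation gives $\xi-\nabla U\perp\ker D^*$. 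This is more economical. One small point you should make explicit: specializing to invariant $\psi$ only tests $U$ against $\varphi\in L^q_{\mathsf{inv}}(\Omega)$, so to conclude $U\in L^p_{\mathsf{inv}}(\Omega)\otimes W^{1,p}(\mathbb R^d)$ you need the observation that, since $U$ is already shift-invariant, $\expect{U\varphi}=\expect{U\,P_{\mathsf{inv}}\varphi}$ for arbitrary $\varphi\in L^q(\Omega)$; this promotes the identity to all test functions and exhibits $\nabla U=P_{\mathsf{inv}}\xi\in L^p(\Omega\times\mathbb R^d)^d$. With that line added, your argument for~(i) is complete.

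For (ii)--(iv) your direct scaling of the commutation identity is slightly cleaner than the paper's trick of reducing (ii) to (i) via $v_\varepsilon=\varepsilon^\gamma u_\varepsilon$ (and (iv) to (iii) via $v_\varepsilon=\varepsilon^{\gamma-1}u_\varepsilon$), but the content is identical.
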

See Section \ref{proofs3.4} for the proof.

The above statement can be adapted to sequences supported in a domain: Let $O\subset \mathbb{R}^d$ be open. We denote by $W^{1,p}_0(O)$ the closure of $C^\infty_c(O)$ in $W^{1,p}(O)$. Since the unfolding operator is naturally defined for functions on $\mathbb{R}^d$, we tacitly identify functions in $L^p(O)$ and $W^{1,p}_0(O)$ with their trivial extension by $0$ to $\mathbb{R}^d$. As a corollary of Proposition~\ref{comp3} we obtain the following.
\begin{corollary}\label{cor111}
Let $O\subset{\mathbb{R}^d}$ be a bounded domain with Lipschitz boundary, and set $O^{+\varepsilon}:=\cb{x \in \mathbb{R}^d: dist(x,O)\leq C \varepsilon}$ where $C>0$ denotes an arbitrary constant independent of $\varepsilon>0$. Consider a sequence $u\e \in L^p(\Omega)\otimes L^p_0(O^{+\varepsilon}\cap \varepsilon\mathbb{Z}^{d})$  satisfying \eqref{comp3:bound}. Then in addition to the convergence statements in Proposition \ref{comp3}, the two-scale limits (from Proposition \ref{comp3}) satisfy
\begin{itemize}
\item if $\gamma=0$,  $U \in L^p_{\mathsf{inv}}(\Omega)\otimes W^{1,p}_0(O)$ and $\chi \in \mathbf{L}^p_{\mathsf{pot}}(\Omega)\otimes L^p(O)$;
\item if $\gamma \in (0,1)$, $U \in L^p_{\mathsf{inv}}(\Omega)\otimes L^{p}(O)$ and $\chi \in \mathbf{L}^p_{\mathsf{pot}}(\Omega)\otimes L^p(O)$;
\item if $\gamma\geq 1$, $U\in L^p(\Omega)\otimes L^p(O)$.
\end{itemize}
\end{corollary}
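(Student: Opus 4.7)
The plan is to invoke Proposition \ref{comp3} as a black box to obtain the two-scale limits $U$ and $\chi$ with the claimed convergences, and then to upgrade the regularity / support information on these limits by exploiting the fact that $u_\varepsilon$ vanishes on $\varepsilon\mathbb{Z}^d\setminus O^{+\varepsilon}$. No new compactness argument is needed, so the work is purely a post-processing of the limits already produced.

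First I would extend every $u_\varepsilon$ by zero to all of $\varepsilon\mathbb{Z}^d$ and apply Proposition \ref{comp3}. In each of the four cases this yields a subsequence along which $u_\varepsilon\overset{2}{\rightharpoonup} U$ and (for $\gamma\in[0,1)$) $\varepsilon^\gamma\nabla^\varepsilon u_\varepsilon\overset{2}{\rightharpoonup}\nabla U+\chi$ or $\chi$, with $U$ and $\chi$ a priori living on $\mathbb{R}^d$.

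The second and main step is to localize the support of $U$ and $\chi$ inside $\bar O$. By definition of the unfolding, $\mathcal{T}_\varepsilon u_\varepsilon(\omega,x)=u_\varepsilon(T_{-\lfloor x\rfloor_\varepsilon/\varepsilon}\omega,\lfloor x\rfloor_\varepsilon)$ vanishes whenever $\lfloor x\rfloor_\varepsilon\notin O^{+\varepsilon}$. Hence for any $\delta>0$ and the open set $O_\delta:=\{x\in\mathbb{R}^d:\mathrm{dist}(x,O)>\delta\}$, one has $\mathcal{T}_\varepsilon u_\varepsilon\equiv 0$ on $\Omega\times O_\delta$ for all $\varepsilon$ sufficiently small (since $C\varepsilon+\varepsilon\sqrt d<\delta$). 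Testing the weak convergence against $\varphi(\omega)\eta(x)$ with $\eta\in C_c^\infty(O_\delta)$ and $\varphi\in L^q(\Omega)$ gives $U=0$ a.e. on $\Omega\times O_\delta$; letting $\delta\downarrow 0$ yields $U=0$ a.e. outside $\Omega\times\bar O$. The identical argument applied to $\mathcal{T}_\varepsilon(\varepsilon^\gamma\nabla^\varepsilon u_\varepsilon)$ (whose support is likewise contained in an $O(\varepsilon)$-neighborhood of $O^{+\varepsilon}$) shows that the two-scale limit of $\varepsilon^\gamma\nabla^\varepsilon u_\varepsilon$ vanishes outside $\Omega\times\bar O$. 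Since $\nabla U$ lives on $\bar O$ anyway (as $U$ does), this forces $\chi$ to vanish outside $\Omega\times\bar O$ as well, giving $\chi\in\mathbf{L}^p_{\mathsf{pot}}(\Omega)\otimes L^p(O)$.

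The third step handles the boundary condition in case $\gamma=0$. From the previous steps, $U\in L^p_{\mathsf{inv}}(\Omega)\otimes W^{1,p}(\mathbb{R}^d)$ with $U(\omega,\cdot)\equiv 0$ outside $\bar O$ for a.e.\ $\omega$. Because $O$ has Lipschitz boundary, the standard characterization of $W^{1,p}_0(O)$ via trivial extension (i.e., a $W^{1,p}(\mathbb{R}^d)$-function vanishing a.e.\ outside $\bar O$ restricts to an element of $W^{1,p}_0(O)$) gives $U(\omega,\cdot)\in W^{1,p}_0(O)$ for a.e.\ $\omega$, and hence $U\in L^p_{\mathsf{inv}}(\Omega)\otimes W^{1,p}_0(O)$ by an approximation with simple tensors.

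I expect the only genuinely delicate point to be the localization in step two: one has to be careful that the support of $\mathcal{T}_\varepsilon u_\varepsilon$ lies inside an $O(\varepsilon)$-thickening of $O^{+\varepsilon}$ (due to the piecewise-constant interpolation in the definition of $\mathcal{T}_\varepsilon$), but this thickening is irrelevant in the limit $\varepsilon\to 0$. All remaining ingredients are either standard Sobolev facts for Lipschitz domains or direct properties of weak convergence.
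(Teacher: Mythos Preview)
The paper explicitly leaves the proof of this corollary to the reader, so there is no proof to compare against. Your approach is correct and is precisely the kind of straightforward post-processing the authors had in mind: invoke Proposition~\ref{comp3} on the trivial extension, then use that $\mathcal{T}_\varepsilon u_\varepsilon$ and $\mathcal{T}_\varepsilon(\varepsilon^\gamma\nabla^\varepsilon u_\varepsilon)$ are supported in an $O(\varepsilon)$-neighborhood of $O$ to force the limits to vanish on $\mathbb{R}^d\setminus\bar O$, and finally appeal to the Lipschitz-domain characterization of $W^{1,p}_0(O)$ via zero extension for the $\gamma=0$ case. The only point worth tightening slightly is the last sentence of your third step: the passage from ``$U(\omega,\cdot)\in W^{1,p}_0(O)$ for a.e.\ $\omega$ and $U\in L^p_{\mathsf{inv}}(\Omega)\otimes W^{1,p}(\mathbb{R}^d)$'' to ``$U\in L^p_{\mathsf{inv}}(\Omega)\otimes W^{1,p}_0(O)$'' is true, but one clean way to justify it is to note that $W^{1,p}_0(O)$ is closed in $W^{1,p}(\mathbb{R}^d)$, approximate $U$ in $L^p(\Omega;W^{1,p}_0(O))$ by simple functions, and then apply $P_{\mathsf{inv}}$ (which fixes $U$ and preserves $W^{1,p}_0(O)$-valued simple functions) to land in $L^p_{\mathsf{inv}}(\Omega)\stackrel{a}{\otimes}W^{1,p}_0(O)$.
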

The proof of the above corollary is left to the reader. We remark that in Proposition \ref{comp3} (i) and (ii) the two-scale limit $U$ is shift-invariant and therefore in the ergodic setting it is deterministic, i.e.,  $U=P_{\mathsf{inv}}U=\expect{U}$.
\begin{corollary}\label{ergodic}
Let $\gamma \in [0,1)$ and $\expect{\cdot}$ be ergodic. Let $u\e$ satisfy the assumptions in Proposition \ref{comp3}. Then the claims in Proposition \ref{comp3} (i) and (ii) hold and we have the following:
\begin{enumerate}[(i)]
\item If $\gamma=0$, then $\expect{u\e}\rightharpoonup U$  in $L^p(\mathbb{R}^d)$, $\expect{\nabla^{\varepsilon}u\e}\rightharpoonup\nabla U$ in $L^p(\mathbb{R}^d)^d$ and $u_{\varepsilon}-\expect{u_{\varepsilon}}\overset{2}{\rightharpoonup} 0$ in $L^p(\Omega\times \mathbb{R}^d)$.
\item If $\gamma \in (0,1)$, then $\expect{u\e}\rightharpoonup U$  in $L^p(\mathbb{R}^d)$, $\expect{\varepsilon^{\gamma} \nabla^{\varepsilon}u\e}\rightharpoonup 0$ in $L^p(\mathbb{R}^d)^d$ and $u_{\varepsilon}-\expect{u_{\varepsilon}}\overset{2}{\rightharpoonup} 0$  in $L^p(\Omega\times \mathbb{R}^d)$.
\item If $\gamma=0$ and $Q\subset{\mathbb{R}^d}$ is open and bounded with Lipschitz boundary, then $\overline{\expect{u_{\varepsilon}}}\rightarrow U$ strongly in $L^p(Q)$.
\item If $\gamma \in [0,1)$ and if, additionally, $u\e \overset{2}{\to}U$  in $L^p(\Omega\times \mathbb{R}^d)$, then for any $\varphi\in L^{\infty}(\Omega)$ we have $\expect{u\e \varphi}\to \expect{\varphi} U $  in $L^p(\mathbb{R}^d)$.
\end{enumerate}
\end{corollary}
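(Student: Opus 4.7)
My plan rests on one basic identity: because the piecewise-constant unfolding satisfies $(\mathcal T_\varepsilon u_\varepsilon)(\omega,x)=u_\varepsilon(T_{-\lfloor x\rfloor_\varepsilon/\varepsilon}\omega,\lfloor x\rfloor_\varepsilon)$ and $T$ is measure-preserving, we have
\begin{equation*}
\expect{\mathcal T_\varepsilon u_\varepsilon}=\overline{\expect{u_\varepsilon}}\quad\text{in }L^p(\mathbb R^d).
\end{equation*}
Under ergodicity, the two-scale limit $U$ from Proposition \ref{comp3} lies in $L^p_{\mathsf{inv}}(\Omega)\otimes(\cdot)\simeq\mathbb R\otimes(\cdot)$, so $U$ is deterministic and equals $\expect{U}$. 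These two facts will drive (i)--(iii); for (iv) I will exploit the strong two-scale convergence together with another application of stationarity.

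\textbf{Proof of (i)--(ii).} The expectation $\expect{\cdot}:L^p(\Omega\times\mathbb R^d)\to L^p(\mathbb R^d)$ is linear and $1$-contractive by Jensen, hence weakly continuous. First I would apply it to $\mathcal T_\varepsilon u_\varepsilon\rightharpoonup U$ to obtain $\overline{\expect{u_\varepsilon}}\rightharpoonup U$ in $L^p(\mathbb R^d)$, which by Lemma \ref{lemma_equivalent_conv} is exactly $\expect{u_\varepsilon}\rightharpoonup U$ in the sense of the definition. Next, I would note that every $\chi\in\mathbf L^p_{\mathsf{pot}}(\Omega)\otimes L^p(\mathbb R^d)$ has $\expect{\chi}=0$: on the dense subspace $\mathsf{ran}\,D\stackrel{a}{\otimes}L^p(\mathbb R^d)$ this is immediate from $\expect{D_i\varphi}=0$, and then extends by continuity. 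Applying $\expect{\cdot}$ to $\nabla^\varepsilon u_\varepsilon\overset{2}{\rightharpoonup}\nabla U+\chi$ (resp.\ $\varepsilon^\gamma\nabla^\varepsilon u_\varepsilon\overset{2}{\rightharpoonup}\chi$) then gives the asserted weak convergence to $\nabla U$ (resp.\ $0$). Finally, since $\expect{u_\varepsilon}$ is deterministic, $\mathcal T_\varepsilon\expect{u_\varepsilon}=\overline{\expect{u_\varepsilon}}$ as constants in $\omega$, and linearity of $\mathcal T_\varepsilon$ gives
\begin{equation*}
\mathcal T_\varepsilon(u_\varepsilon-\expect{u_\varepsilon})=\mathcal T_\varepsilon u_\varepsilon-\overline{\expect{u_\varepsilon}}\rightharpoonup U-U=0,
\end{equation*}
i.e.\ $u_\varepsilon-\expect{u_\varepsilon}\overset{2}{\rightharpoonup}0$.

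\textbf{Proof of (iii).} I would pass to the piecewise-affine interpolation $\widehat{\expect{u_\varepsilon}}$. Its $L^p$-norm is controlled by $\|\expect{u_\varepsilon}\|_{L^p(\varepsilon\mathbb Z^d)}$ and the elementwise gradient is, up to harmless constants, bounded by $\|\expect{\nabla^\varepsilon u_\varepsilon}\|_{L^p(\varepsilon\mathbb Z^d)}$; both are uniformly bounded by the $\gamma=0$ case of hypothesis \eqref{comp3:bound}. Hence $\widehat{\expect{u_\varepsilon}}$ is bounded in $W^{1,p}$ on any neighbourhood of $\overline Q$, so Rellich-Kondrachov delivers a strongly convergent subsequence in $L^p(Q)$. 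Lemma \ref{lemma_equivalent_conv} identifies the weak limit with $U$, and uniqueness of the cluster point upgrades this to convergence of the whole sequence. Since $\|\widehat{\expect{u_\varepsilon}}-\overline{\expect{u_\varepsilon}}\|_{L^p(Q)}\to 0$ (the two interpolations agree at nodes and the discrete gradient is $L^p$-bounded), the same strong convergence holds for $\overline{\expect{u_\varepsilon}}$.

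\textbf{Proof of (iv), the hard part.} This is the step that requires genuine use of strong (not just weak) two-scale convergence, and the bookkeeping with the shift $T_{-x/\varepsilon}$ is the main nuisance. The plan is to first rewrite, via measure-preservation of $T$ applied to the joint integrand $u_\varepsilon\varphi$,
\begin{equation*}
\overline{\expect{u_\varepsilon\varphi}}(x)=\expect{u_\varepsilon(\cdot,\lfloor x\rfloor_\varepsilon)\varphi(\cdot)}=\expect{(\mathcal T_\varepsilon u_\varepsilon)(\cdot,x)\,\varphi(T_{-\lfloor x\rfloor_\varepsilon/\varepsilon}\cdot)}.
\end{equation*}
Since $U$ is deterministic, writing $\mathcal T_\varepsilon u_\varepsilon=U+r_\varepsilon$ with $\|r_\varepsilon\|_{L^p(\Omega\times\mathbb R^d)}\to 0$, the main term becomes $U(x)\expect{\varphi(T_{-\lfloor x\rfloor_\varepsilon/\varepsilon}\cdot)}=\expect{\varphi}U(x)$ by stationarity, while the remainder is estimated by
\begin{equation*}
\|\overline{\expect{u_\varepsilon\varphi}}-\expect{\varphi}U\|_{L^p(\mathbb R^d)}^p\le\|\varphi\|_{L^\infty}^p\,\expect{\|r_\varepsilon\|_{L^p(\mathbb R^d)}^p}=\|\varphi\|_{L^\infty}^p\|r_\varepsilon\|_{L^p(\Omega\times\mathbb R^d)}^p\to 0
\end{equation*}
via Jensen. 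Translating back through Lemma \ref{lemma_equivalent_conv} yields the claimed convergence $\expect{u_\varepsilon\varphi}\to\expect{\varphi}U$ in $L^p(\mathbb R^d)$.
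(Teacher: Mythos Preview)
Your proposal is correct and follows essentially the same route as the paper. The key identity $\expect{\mathcal T_\varepsilon u_\varepsilon}=\overline{\expect{u_\varepsilon}}$ is precisely the ergodic specialization of the commutation $P_{\mathsf{inv}}\circ\widetilde{\mathcal T}_\varepsilon=P_{\mathsf{inv}}$ established in Step~1 of Lemma~\ref{lemma4}, and your use of it in (i)--(iii), together with Rellich for (iii), mirrors the paper's argument. For (iv) the paper is marginally more direct: instead of unfolding and tracking the shift $T_{-\lfloor x\rfloor_\varepsilon/\varepsilon}$ on $\varphi$, it simply writes $\expect{\overline{u_\varepsilon}\varphi}-\expect{\varphi}U=\expect{(\overline{u_\varepsilon}-U)\varphi}$ (using that $U$ is deterministic), bounds by Jensen and $\|\varphi\|_{L^\infty}$, and then observes that $\expect{\int|\overline{u_\varepsilon}-U|^p}=\|\mathcal T_\varepsilon u_\varepsilon-U\|_{L^p(\Omega\times\mathbb R^d)}^p$ by stationarity; your detour through the unfolded representation is equivalent but slightly longer.
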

For the proof of this corollary, see Section \ref{proofs3.4}.

In the following, we show that weak two-scale accumulation points can be recovered in the strong two-scale sense. 
\begin{proposition}\label{prop1}
\begin{enumerate}[(i)]
\item Let $\gamma \in [0,1)$. For $\varepsilon>0$ there exists a linear and bounded operator $\mathcal{G}_{\varepsilon}^{\gamma}: \mathbf{L}^p_{\mathsf{pot}}(\Omega)\otimes L^p(\mathbb{R}^d) \rightarrow L^p(\Omega\times \varepsilon\mathbb{Z}^{d})$ such that
\begin{equation*}
\mathcal{G}^{\gamma}_{\varepsilon} \chi \overset{2}{\rightarrow} 0  \text{ in } L^p(\Omega \times \mathbb{R}^d)\quad\text{ and }\quad
\varepsilon^{\gamma} \nabla^{\varepsilon} \mathcal{G}_{\varepsilon}^{\gamma} \chi \overset{2}{\to} \chi \text{ in } L^p(\Omega \times \mathbb{R}^d)^d
\end{equation*}
for all $\chi\in \mathbf{L}^p_{\mathsf{pot}}(\Omega)\otimes L^p(\mathbb{R}^d)$.  Moreover, the operator norm of $\mathcal{G}\e^{\gamma}$ can be bounded independently of $0<\varepsilon\leq 1$.
\item Let $U\in L^p_{\mathsf{inv}}(\Omega)\otimes W^{1,p}(\mathbb{R}^d)$: We have
\begin{equation*}
\nabla^{\varepsilon}\mathcal{F}\e U\overset{2}{\rightarrow}\nabla U \text{ in }L^{p}(\Omega\times \mathbb{R}^d)^d.
\end{equation*} 
\item Let $\gamma\in (0,1)$. For $\varepsilon>0$ there exists a linear and bounded operator $\mathcal{F}\e^{\gamma}: L^p_{\mathsf{inv}}(\Omega)\otimes L^p(\mathbb{R}^d)\to L^p(\Omega\times \varepsilon\mathbb{Z}^{d})$ such that 
\begin{equation*}
\mathcal{F}\e^{\gamma} U \overset{2}{\rightarrow}U \text{ in }L^{p}(\Omega\times \mathbb{R}^d), \quad
\varepsilon^{\gamma} \nabla^{\varepsilon}\mathcal{F}\e^{\gamma} U\overset{2}{\rightarrow}0 \text{ in }L^{p}(\Omega\times \mathbb{R}^d)^d
\end{equation*}
for all $U \in L^p_{\mathsf{inv}}(\Omega)\otimes L^p(\mathbb{R}^d)$. Moreover, the operator norm of $\mathcal{F}\e^{\gamma}$ can be bounded independently of $0<\varepsilon\leq 1$. 
\item Let $\gamma\geq 1$. For any $U \in L^p(\Omega\times \mathbb{R}^d)$, it holds that
\begin{equation*}
\varepsilon^{\gamma}\nabla^{\varepsilon}\mathcal{F}\e U \overset{2}{\to}a_{\gamma}DU \text{ in } L^p(\Omega\times \mathbb{R}^d)^d,
\end{equation*}
where $a_{\gamma}=\twopartdef{1}{\gamma=1,}{0}{\gamma>1.}$ 
\end{enumerate} 
\end{proposition}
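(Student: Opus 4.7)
My plan is to prove the four parts in the order (ii), (iv), (iii), (i), with (i) being the most technically involved. Parts (ii) and (iv) reduce to direct computations using the definition of $\mathcal F_\varepsilon$; part (iii) I would handle by spatial mollification at a scale intermediate between $1$ and $\varepsilon^\gamma$; part (i) requires an explicit ansatz on simple tensors combined with a regularization to sidestep the absence of a stochastic Poincar\'e inequality.

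For (ii), shift-invariance of $U\in L^p_{\mathsf{inv}}(\Omega)\otimes W^{1,p}(\mathbb R^d)$ gives $\mathcal F_\varepsilon U(\omega,x)=\pi_\varepsilon U(\omega,x)$ for $x\in\varepsilon\mathbb Z^d$ (since $T_{x/\varepsilon}$ acts trivially), so $\mathcal T_\varepsilon\nabla^\varepsilon\mathcal F_\varepsilon U=\overline{\nabla^\varepsilon\pi_\varepsilon U}$, and the classical strong convergence of difference quotients of piecewise-constant discretizations to the true gradient (first for smooth $U$, extended by density via a uniform $L^p$ bound in $\|\nabla U\|_{L^p}$) yields the claim. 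For (iv), a direct manipulation gives the commutation identity
\[
\widetilde{\mathcal T}_\varepsilon(\nabla_i^\varepsilon\mathcal F_\varepsilon U)(\omega,x)=\tfrac{1}{\varepsilon}D_i\pi_\varepsilon U(\omega,x+\varepsilon e_i)+\nabla_i^\varepsilon\pi_\varepsilon U(\omega,x);
\]
multiplying by $\varepsilon^\gamma$, the first summand $\varepsilon^{\gamma-1}D_i\pi_\varepsilon U(\cdot,\cdot+\varepsilon e_i)$ converges to $a_\gamma D_iU$ in $L^p(\Omega\times\mathbb R^d)$ by $\pi_\varepsilon U\to U$, boundedness of $D_i$, and $L^p$-continuity of translations, while the second is bounded by $2\varepsilon^{\gamma-1}\|U\|_{L^p}$, vanishing for $\gamma>1$ and, for $\gamma=1$, equal to $\varepsilon\nabla_i^\varepsilon\pi_\varepsilon U\to 0$ first on smooth $U$ and then by density.

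For (iii), I would set $U_\varepsilon:=U\ast\rho_{\varepsilon^\beta}$ (spatial convolution) with $\beta\in(0,\gamma)$, and $\mathcal F_\varepsilon^\gamma U:=\mathcal F_\varepsilon U_\varepsilon$. Convolution in $x$ preserves shift-invariance in $\omega$, so $U_\varepsilon\in L^p_{\mathsf{inv}}(\Omega)\otimes W^{1,p}(\mathbb R^d)$, and Young's inequality gives $\|\nabla U_\varepsilon\|_{L^p}\le C\varepsilon^{-\beta}\|U\|_{L^p}$. The (ii)-computation then produces $\|\varepsilon^\gamma\mathcal T_\varepsilon\nabla^\varepsilon\mathcal F_\varepsilon^\gamma U\|_{L^p}\le C\varepsilon^{\gamma-\beta}\|U\|_{L^p}\to 0$, and $\mathcal F_\varepsilon^\gamma U\overset{2}{\to}U$ follows from $\overline{\pi_\varepsilon U_\varepsilon}\to U$ in $L^p$ via a triangle inequality with $\overline{\pi_\varepsilon U}\to U$ and $\|U_\varepsilon-U\|_{L^p}\to 0$. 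Uniform boundedness $\|\mathcal F_\varepsilon^\gamma\|\le 1$ is immediate.

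Part (i) is the main difficulty. On elementary tensors $\chi=D\varphi\otimes\eta$ I would set $u_\varepsilon(\omega,x):=\varepsilon^{1-\gamma}\varphi(T_{x/\varepsilon}\omega)\pi_\varepsilon\eta(x)$; its $L^p$-norm is $O(\varepsilon^{1-\gamma})\to 0$ giving $u_\varepsilon\overset{2}{\to}0$, and the telescope
\[
\varepsilon^\gamma\nabla_i^\varepsilon u_\varepsilon=D_i\varphi(T_{x/\varepsilon}\omega)\pi_\varepsilon\eta(x+\varepsilon e_i)+\varphi(T_{x/\varepsilon}\omega)\,\varepsilon\nabla_i^\varepsilon\pi_\varepsilon\eta(x)
\]
combined with unfolding (which trivializes $T_{x/\varepsilon}$) shows the first summand converges to $\chi_i=D_i\varphi\otimes\eta$ in $L^p(\Omega\times\mathbb R^d)$ (using $\overline{\pi_\varepsilon\eta}\to\eta$ and translation continuity) while the second vanishes in norm (bounded by $2\|\varphi\|_{L^p}\|\eta\|_{L^p}$, $\to 0$ for smooth $\eta$, extended by density). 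The hard part is upgrading this bilinear ansatz to a uniformly bounded linear operator on the full space $\mathbf L^p_{\mathsf{pot}}(\Omega)\otimes L^p(\mathbb R^d)$: in the stochastic setting $\|\varphi\|_{L^p(\Omega)}$ is not controlled by $\|D\varphi\|_{L^p(\Omega)^d}$, so the naive bound fails. My remedy is to replace the potential $\varphi$ by $\Psi_{\delta_\varepsilon}\chi$, a resolvent-regularized right inverse of $D$ with $\|\Psi_\delta\|\le C\delta^{-1/2}$ and $D\Psi_\delta\chi\to\chi$ in $L^p$ as $\delta\to 0^+$ (which holds because $\mathbf L^p_{\mathsf{pot}}(\Omega)=(\ker D^*)^\bot$). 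Setting $\delta_\varepsilon:=\varepsilon^{2(1-\gamma)}$ balances $\varepsilon^{1-\gamma}$ against $\|\Psi_{\delta_\varepsilon}\|$, yielding uniform boundedness of $\mathcal G_\varepsilon^\gamma$ in $\varepsilon$; since $\delta_\varepsilon\to 0$ (using $\gamma<1$), the convergence of $\varepsilon^\gamma\nabla^\varepsilon\mathcal G_\varepsilon^\gamma\chi\to\chi$ observed on simple tensors carries through by linearity and density to the full closure.
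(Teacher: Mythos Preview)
Your arguments for (ii) and (iv) are correct and essentially match the paper's proof (direct computation of $\mathcal T_\varepsilon\nabla^\varepsilon\mathcal F_\varepsilon U$ plus density). Your approach to (iii) via spatial mollification at scale $\varepsilon^\beta$ with $\beta\in(0,\gamma)$ is correct and genuinely different from the paper: the paper instead defines $\mathcal F_\varepsilon^\gamma U$ as the solution of $\varepsilon^{-\alpha}u_\varepsilon+\nabla^{\varepsilon,*}\nabla^\varepsilon u_\varepsilon=\varepsilon^{-\alpha}\mathcal F_\varepsilon U$ on $\varepsilon\mathbb Z^d$ (with $0<\alpha<2\gamma$) and appeals to discrete maximal $L^p$-regularity. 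Your mollification argument is more elementary and avoids Calder\'on--Zygmund theory altogether.

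Your approach to (i), however, has a genuine gap. With $\mathcal G_\varepsilon^\gamma\chi:=\varepsilon^{1-\gamma}\mathcal F_\varepsilon(\Psi_{\delta_\varepsilon}\chi)$ and $\delta_\varepsilon=\varepsilon^{2(1-\gamma)}$, the balance $\varepsilon^{1-\gamma}\|\Psi_{\delta_\varepsilon}\|\le C$ gives only $\|\mathcal G_\varepsilon^\gamma\chi\|=O(1)$, not $o(1)$; so the required \emph{strong} two-scale convergence $\mathcal G_\varepsilon^\gamma\chi\overset{2}{\to}0$ is not established. More seriously, your density argument for $\varepsilon^\gamma\nabla^\varepsilon\mathcal G_\varepsilon^\gamma\chi\to\chi$ fails: after unfolding, the telescope reads
\[
\mathcal T_\varepsilon\big(\varepsilon^\gamma\nabla_i^\varepsilon\mathcal G_\varepsilon^\gamma\chi\big)=\overline{\pi_\varepsilon(D_i\Psi_{\delta_\varepsilon}\chi)(\cdot,\cdot+\varepsilon e_i)}+\overline{\varepsilon\nabla_i^\varepsilon\pi_\varepsilon(\Psi_{\delta_\varepsilon}\chi)},
\]
and the second term is bounded only by $2\|\Psi_{\delta_\varepsilon}\chi\|\le C\delta_\varepsilon^{-1/2}\|\chi\|$, which blows up. Hence $\chi\mapsto\varepsilon^\gamma\nabla^\varepsilon\mathcal G_\varepsilon^\gamma\chi$ is \emph{not} uniformly bounded, and the density step cannot be executed. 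Even on simple tensors $D\varphi\otimes\eta$ with smooth $\eta$, the second term is of size $\|\Psi_{\delta_\varepsilon}(D\varphi)\|\cdot O(\varepsilon)=O(\varepsilon^\gamma)$, which does not vanish when $\gamma=0$. Finally, the very existence of $\Psi_\delta$ with $\|\Psi_\delta\|_{L^p\to L^p}\le C\delta^{-1/2}$ for $p\neq 2$ amounts to maximal $L^p$-regularity for $D^*D$ on $L^p(\Omega)$, which you do not justify (it can be obtained by Calder\'on transference from $\mathbb Z^d$, but this is not automatic).

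The paper avoids all of these issues by regularizing in the \emph{physical} space rather than in $\Omega$: it defines $\mathcal G_\varepsilon^\gamma\chi$ as the $L^p(\Omega\times\varepsilon\mathbb Z^d)$-solution of
\[
\varepsilon^{-\alpha}u_\varepsilon+\nabla^{\varepsilon,*}\nabla^\varepsilon u_\varepsilon=\nabla^{\varepsilon,*}\big(\varepsilon^{-\gamma}\mathcal F_\varepsilon\chi\big)\quad\text{on }\varepsilon\mathbb Z^d,\qquad 2\gamma<\alpha<2,
\]
and invokes discrete maximal $L^p$-regularity on $\varepsilon\mathbb Z^d$ (a classical Calder\'on--Zygmund estimate, uniform in $\varepsilon$). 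This immediately yields $\|u_\varepsilon\|\le C\varepsilon^{\alpha/2-\gamma}\|\chi\|\to 0$ together with the uniform gradient bound $\|\varepsilon^\gamma\nabla^\varepsilon u_\varepsilon\|\le C\|\chi\|$; the identification of the limit then proceeds via a separate nonlinear approximation lemma. The point is that $\mathcal F_\varepsilon\chi$ already mixes the $\omega$- and $x$-variables, so the physical-space resolvent handles both at once and no competition between stochastic regularization and spatial continuity arises.
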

\begin{corollary}\label{rem5}
\begin{enumerate}[(i)]
\item The mapping 
  \begin{align*}
     &(L^p_{\mathsf{inv}}(\Omega)\otimes W^{1,p}(\mathbb{R}^d))\times (\mathbf{L}^p_{\mathsf{pot}}(\Omega)\otimes L^p(\mathbb{R}^d))\to L^p(\Omega\times \varepsilon\mathbb{Z}^{d})\\
     &\qquad\qquad (U,\chi)\mapsto \mathcal{F}\e U+\mathcal{G}^0\e \chi=:u\e(U,\chi)
   \end{align*}
   is linear and bounded, and it holds that
   \begin{align*}
     u\e(U,\chi) \overset{2}{\to} U \text{ in }L^{p}(\Omega\times \mathbb{R}^d), \quad  \nabla^{\varepsilon}u\e(U,\chi) \overset{2}{\to}\nabla U +\chi \text{ in }L^{p}(\Omega\times \mathbb{R}^d)^d.
   \end{align*}
   Moreover, its operator norm is bounded uniformly in $0<\varepsilon\leq 1$.
 \item Let $\gamma\in (0,1)$. The mapping 
   \begin{align*}
     &(L^p_{\mathsf{inv}}(\Omega)\otimes L^p(\mathbb{R}^d))\times
     (\mathbf{L}^p_{\mathsf{pot}}(\Omega)\otimes L^p(\mathbb{R}^d))
     \to
     L^p(\Omega\times \varepsilon\mathbb{Z}^{d})\\
     &\qquad\qquad (U,\chi)\mapsto \mathcal{F}\e^{\gamma} U+\mathcal{G}^{\gamma}\e \chi=:u\e(U,\chi)
  \end{align*}
  is linear and bounded and it holds that
\begin{align*}
 & u\e(U,\chi) \overset{2}{\to} U \text{ in }L^{p}(\Omega\times \mathbb{R}^d), \quad  \varepsilon^{\gamma}\nabla^{\varepsilon}u\e(U,\chi) \overset{2}{\to} \chi \text{ in }L^{p}(\Omega\times \mathbb{R}^d)^d.
\end{align*}
Moreover, its operator norm is bounded uniformly in $0<\varepsilon\leq 1$.
\smallskip

Let $O\subset\mathbb{R}^d$ be open and bounded with Lipschitz boundary.
\item For any $(U,\chi)\in ( L^p_{\mathsf{inv}}(\Omega)\otimes W^{1,p}_0(O))\times (\mathbf{L}^p_{\mathsf{pot}}(\Omega)\otimes L^p(O))$, we can find a sequence $u\e \in L^p(\Omega)\otimes L^p_0(O\cap \varepsilon\mathbb{Z}^{d})$ such that
\begin{equation*}
 u\e \overset{2}{\to} U \text{ in }L^{p}(\Omega\times \mathbb{R}^d), \quad \nabla^{\varepsilon}u\e \overset{2}{\to}\nabla U +\chi \text{ in }L^{p}(\Omega\times \mathbb{R}^d)^d.
\end{equation*}
\item  Let $\gamma \in (0,1)$. There exists a mapping 
  \begin{align*}
    &( L^p_{\mathsf{inv}}(\Omega)\otimes L^p(O)) \times (\mathbf{L}^p_{\mathsf{pot}}(\Omega)\otimes L^p(O))
    \to
    L^p(\Omega)\otimes L^p_0(O\cap \varepsilon\mathbb{Z}^{d})\\
    &\qquad\qquad (U,\chi)\mapsto u\e(U,\chi),
  \end{align*}
  which is linear and bounded, and it holds that
  \begin{align*}
    & u\e(U,\chi) \overset{2}{\to} U \text{ in }L^{p}(\Omega\times \mathbb{R}^d), \quad  \varepsilon^{\gamma} \nabla^{\varepsilon}u\e(U,\chi) \overset{2}{\to} \chi \text{ in }L^{p}(\Omega\times \mathbb{R}^d)^d.
\end{align*}
Moreover, its operator norm is bounded uniformly in $0<\varepsilon\leq 1$.
\end{enumerate}  
\end{corollary}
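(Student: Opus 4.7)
The whole corollary is a repackaging of Proposition~\ref{prop1}: parts (i) and (ii) are essentially immediate from the individual convergence properties of $\mathcal{F}_\varepsilon$, $\mathcal{F}^{\gamma}_\varepsilon$ and $\mathcal{G}^{\gamma}_\varepsilon$, while parts (iii) and (iv) require an additional density/cut-off argument to enforce the Dirichlet boundary condition. I would handle (i) and (ii) in one stroke, then treat (iii) and (iv) together via an Attouch-type diagonal argument.

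\textbf{Proof of (i)--(ii).} The mapping $(U,\chi)\mapsto u\e(U,\chi)$ is a sum of two linear operators (Proposition~\ref{prop1}(ii)+(i) for part~(i), Proposition~\ref{prop1}(iii)+(i) for part~(ii)), each of which is bounded uniformly in $0<\varepsilon\le 1$; so the sum is linear and has operator norm bounded uniformly in $\varepsilon$. The two two-scale convergences follow by adding the respective convergences: $\mathcal{F}\e U\overset{2}{\to}U$ (which is Lemma~\ref{Propfold}(iii)) combined with $\mathcal{G}^{0}\e\chi\overset{2}{\to}0$ gives $u\e(U,\chi)\overset{2}{\to}U$; and $\nabla^{\varepsilon}\mathcal{F}\e U\overset{2}{\to}\nabla U$ combined with $\nabla^{\varepsilon}\mathcal{G}^{0}\e\chi\overset{2}{\to}\chi$ gives $\nabla^{\varepsilon}u\e(U,\chi)\overset{2}{\to}\nabla U+\chi$. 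Part~(ii) is identical with $\mathcal{F}^{\gamma}\e$ in place of $\mathcal{F}\e$ and $\varepsilon^{\gamma}\nabla^{\varepsilon}$ in place of $\nabla^{\varepsilon}$.

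\textbf{Proof of (iii)--(iv).} The strategy is to approximate the target pair $(U,\chi)$ by compactly-supported data, apply the operator from (i) (resp.\ (ii)), observe that for $\varepsilon$ small the resulting sequence automatically lies in $L^p_0(O\cap \varepsilon\mathbb{Z}^{d})$, and then extract a diagonal. More precisely, choose sequences
\[
U_n\in L^p_{\mathsf{inv}}(\Omega)\stackrel{a}{\otimes}C^\infty_c(O),\qquad \chi_n\in \mathbf{L}^p_{\mathsf{pot}}(\Omega)\stackrel{a}{\otimes}C^\infty_c(O)
\]
with $U_n\to U$ in $L^p_{\mathsf{inv}}(\Omega)\otimes W^{1,p}_0(O)$ (resp.\ in $L^p_{\mathsf{inv}}(\Omega)\otimes L^p(O)$ for part (iv)) and $\chi_n\to\chi$ in $\mathbf{L}^p_{\mathsf{pot}}(\Omega)\otimes L^p(O)$; such approximations exist by the density of $C^\infty_c(O)$ in $W^{1,p}_0(O)$ and in $L^p(O)$, and by the construction of the tensor-product spaces in Section~\ref{section:297}. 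Set $v^n\e:=\mathcal{F}\e U_n+\mathcal{G}^0\e\chi_n$ (resp.\ $\mathcal{F}^{\gamma}\e U_n+\mathcal{G}^{\gamma}\e\chi_n$). Because the building blocks of $U_n,\chi_n$ are supported in a set $K_n\Subset O$, a direct inspection of the definitions of $\mathcal{F}\e$, $\mathcal{F}^{\gamma}\e$ and $\mathcal{G}^{\gamma}\e$ (which only probe values of the arguments in an $O(\varepsilon)$-neighborhood of a given lattice point) shows that $v^n\e\in L^p(\Omega)\otimes L^p_0(O\cap \varepsilon\mathbb{Z}^{d})$ whenever $\varepsilon\le \varepsilon(n)$ with $\varepsilon(n)>0$ small enough that $K_n+\varepsilon(n)\Box\Subset O$. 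By parts (i)/(ii) we have, for each fixed $n$,
\[
v^n\e\overset{2}{\to}U_n,\qquad \nabla^{\varepsilon}v^n\e\overset{2}{\to}\nabla U_n+\chi_n\qquad(\text{resp.\ }\varepsilon^{\gamma}\nabla^{\varepsilon}v^n\e\overset{2}{\to}\chi_n),
\]
in $L^p(\Omega\times\mathbb{R}^d)$ as $\varepsilon\to 0$.

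\textbf{Diagonal extraction.} Since the norms $\|U_n-U\|+\|\chi_n-\chi\|\to 0$ and the operators $(U,\chi)\mapsto v\e$ are bounded uniformly in $\varepsilon\in(0,1]$, a standard Attouch diagonal argument produces a sequence $\varepsilon_n\downarrow 0$ (with $\varepsilon_n\le\varepsilon(n)$) so that $u\e:=v^{n(\varepsilon)}\e$, with $n(\varepsilon)\to\infty$ chosen accordingly, satisfies the desired two-scale convergences. This proves (iii). For (iv), the same diagonal yields the required mapping; to obtain an honest \emph{linear} operator as stated, I would replace the non-linear selection by composing the linear operator from (ii) with a fixed family of linear approximating/cutoff operators $(U,\chi)\mapsto (U_\varepsilon,\chi_\varepsilon)$ (e.g.\ mollification combined with multiplication by a deterministic cutoff $\rho_\varepsilon\in C^\infty_c(O)$ equal to $1$ on $\{x\in O:\operatorname{dist}(x,\partial O)>\sqrt{\varepsilon}\}$); the uniform bounds on the operator norm then follow because multiplication by $\rho_\varepsilon$ acts boundedly on $L^p(O)$ with norm $\le 1$, and the error $(U-U_\varepsilon,\chi-\chi_\varepsilon)\to(0,0)$ disappears in the two-scale limit.

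\textbf{Main obstacle.} The only non-routine point is verifying the support statement for $\mathcal{G}^{\gamma}\e\chi$ (and, in (iv), constructing a linear cut-off that preserves the uniform operator bound while killing values near $\partial O$ without destroying the convergence of the gradient). Once the explicit formula for $\mathcal{G}^{\gamma}\e$ from Section~\ref{gradients} is unpacked, showing that a test field $\chi$ compactly supported in $O$ gives $\mathcal{G}^{\gamma}\e\chi$ supported in $O\cap\varepsilon\mathbb{Z}^{d}$ for small $\varepsilon$ becomes a local neighborhood check; thereafter the diagonal step is standard.
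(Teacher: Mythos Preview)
Your treatment of (i) and (ii) is fine and matches the paper. The gap is in (iii)--(iv), specifically in the sentence ``a direct inspection of the definitions of $\mathcal{F}\e$, $\mathcal{F}^{\gamma}\e$ and $\mathcal{G}^{\gamma}\e$ (which only probe values of the arguments in an $O(\varepsilon)$-neighborhood of a given lattice point)''. This is false for $\mathcal{G}^{\gamma}\e$ and $\mathcal{F}^{\gamma}\e$: in the proof of Proposition~\ref{prop1}, $\mathcal{G}^{\gamma}\e\chi$ is defined as the $L^p(\Omega\times\varepsilon\mathbb{Z}^{d})$-solution of the \emph{global} discrete elliptic equation $\varepsilon^{-\alpha}u\e+\nabla^{\varepsilon,*}\nabla^{\varepsilon}u\e=\nabla^{\varepsilon,*}\varepsilon^{-\gamma}\mathcal{F}\e\chi$ on all of $\varepsilon\mathbb{Z}^{d}$ (and $\mathcal{F}^{\gamma}\e$ is defined analogously). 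The resolvent of the discrete Laplacian has a Green's function supported on the whole lattice, so $\mathcal{G}^{\gamma}\e\chi$ is \emph{not} compactly supported even when $\chi$ is. Your ``local neighborhood check'' therefore cannot succeed, and the density/diagonal argument collapses at exactly the point you flag as routine.

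The paper fixes this by multiplying by an explicit cut-off and then controlling the extra gradient term this produces. For (iii) one sets $u_{\varepsilon,\delta}=\mathcal{F}\e U_\delta+\eta_\delta\,\mathcal{G}^0\e\chi$ with $\eta_\delta\in C^\infty_c(O)$, $|\nabla\eta_\delta|\le C/\delta$; the dangerous term $(\nabla^{\varepsilon}\eta_\delta)\,\mathcal{G}^0\e\chi$ is handled because $\|\mathcal{T}\e\mathcal{G}^0\e\chi\|_{L^p}\to 0$ beats the $1/\delta$ blow-up when one lets $\varepsilon\to 0$ first and $\delta\to 0$ afterwards, and then one diagonalises. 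For (iv) one takes $u\e=\eta_{\delta(\varepsilon)}(\mathcal{F}^{\gamma}\e U+\mathcal{G}^{\gamma}\e\chi)$ with the specific choice $\delta(\varepsilon)=\varepsilon^{\gamma/2}$, so that the product-rule error $\varepsilon^{\gamma}(\nabla^{\varepsilon}\eta_{\delta(\varepsilon)})(\cdots)$ is of size $\varepsilon^{\gamma/2}$ and vanishes; this is what makes the construction linear and uniformly bounded. Your last paragraph gestures at a cut-off for (iv) but treats it as optional and does not identify the balance $\delta(\varepsilon)\sim\varepsilon^{\gamma/2}$ needed to kill the gradient of the cut-off; without that, the claimed convergence of $\varepsilon^{\gamma}\nabla^{\varepsilon}u\e$ does not follow.
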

For the proof of the above two results, see Section \ref{proofs3.4}. We remark that in the case $\gamma\geq 1$, the recovery sequence for $U \in L^p(\Omega\times \mathbb{R}^d)$ is simply given by $\mathcal{F}\e U$. Moreover, in the case of prescribed boundary data for the recovery sequence, we might consider a cut-off procedure as in (iv) above.  
\begin{remark} 
Note that the construction of the recovery sequence in the whole-space cases (i) and (ii) (and if $\gamma\in (0,1)$ for a domain (iv)) is linear in the sense that the mapping $(U,\chi)\mapsto u\e$ is linear. In contrast, the construction for a domain (iii) is nonlinear, since it relies on a cut-off procedure applied to the whole-space construction. We remark that the cut-off procedure can be avoided in certain cases: For $p=2$, we can construct the recovery sequence, similarly as in the proof of Proposition \ref{prop1} (i), by defining $u\e$ as the unique solution of $\nabla^{\varepsilon,*}\nabla^{\varepsilon}u\e=\nabla^{\varepsilon,*}(\nabla^{\varepsilon}\mathcal{F}\e U+\mathcal{F}\e\chi)$ in the interior of $O\cap \varepsilon\mathbb{Z}^{d}$ and with prescribed homogeneous Dirichlet boundary data. For $p\neq 2$ the same strategy applies as long as the above discrete elliptic equation satisfies maximal $L^p$-regularity. The latter depends on the regularity of the domain $O$.
\end{remark}
\subsection{Unfolding and (lower semi-)continuity of convex energies}\label{section32}
Our $\Gamma$-convergence results for convex energies exploit the following result.
\begin{proposition}\label{pro67}Let $V:\Omega \times \mathbb{R}^k \rightarrow \mathbb{R}$ be jointly measurable (i.e.,  w.r.t. $\mathcal{F}\otimes \mathcal{B}(\mathbb{R}^k)$) and for $P$-a.e. $\omega\in \Omega$, $V(\omega,\cdot)$ be convex. Moreover, we assume that there exists $C>0$ such that
\begin{equation*}
\frac{1}{C}|F|^p-C\leq V(\omega,F) \leq C(|F|^p+1),
\end{equation*}
for $P$-a.e. $\omega\in \Omega$ and all $F\in \mathbb{R}^k$. Let $O,O^{+\varepsilon} \subset{\mathbb{R}^d}$ be bounded domains with Lipschitz boundaries satisfying $ O\subset O^{+\varepsilon}\subset \cb{x \in \mathbb{R}^d: dist(x,O)\leq C \varepsilon}$ for some $C>0$.
\begin{enumerate}[(i)]
\item If $u\e \in L^p(\Omega\times \varepsilon\mathbb{Z}^{d})^k$ and $u\e \overset{2}{\rightharpoonup} U$ in $L^{p}(\Omega\times \mathbb{R}^d)^k$, then
\begin{equation*}
\liminf_{\varepsilon\rightarrow 0}\expect{\int_{O^{+\varepsilon}\cap \varepsilon\mathbb{Z}^{d}}V(T_{\frac{x}{\varepsilon}}\omega,u\e(\omega,x))dm_{\varepsilon}(x)}\geq \expect{\int_{O} V(\omega,U(\omega,x))dx}.
\end{equation*}
\item If $u\e \in L^{p}(\Omega \times \varepsilon \mathbb{Z}^d)^k$ and $u\e \overset{2}{\to} U$ in $L^{p}(\Omega\times \mathbb{R}^d)^k$, then
\begin{equation*}
\lim_{\varepsilon\rightarrow 0}\expect{\int_{O^{+\varepsilon}\cap \varepsilon\mathbb{Z}^{d}} V(T_{\frac{x}{\varepsilon}}\omega,u\e(\omega,x))dm_{\varepsilon}(x)}= \expect{\int_{O}V(\omega,U(\omega,x))dx}.
\end{equation*}
\end{enumerate}
\end{proposition}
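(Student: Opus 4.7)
The plan is to translate both assertions into statements about Lebesgue integrals of $V(\omega,\mathcal{T}\e u\e(\omega,x))$ on $\mathbb{R}^d$ via a slight generalization of the transformation formula from the introduction. Using shift-invariance of $P$ together with the identity $\overline{f\cdot g}=\overline{f}\cdot\overline{g}$ for functions defined on $\varepsilon\mathbb{Z}^{d}$, one readily obtains
\begin{equation*}
\expect{\int_{O^{+\varepsilon}\cap\varepsilon\mathbb{Z}^{d}}V(T_{\frac{x}{\varepsilon}}\omega,u\e(\omega,x))dm\e(x)}=\expect{\int_{\mathbb{R}^d}\psi\e(x)V(\omega,\mathcal{T}\e u\e(\omega,x))dx},
\end{equation*}
where $\psi\e:=\overline{\mathbf{1}_{O^{+\varepsilon}\cap\varepsilon\mathbb{Z}^{d}}}$ is the piecewise-constant extension of the indicator of $O^{+\varepsilon}\cap\varepsilon\mathbb{Z}^{d}$. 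Since $O$ has Lipschitz boundary and $O\subset O^{+\varepsilon}\subset\cb{x:\mathrm{dist}(x,O)\leq C\varepsilon}$, the functions $\psi\e$ are bounded by $1$, supported in a fixed neighborhood of $\overline{O}$, and converge pointwise a.e.\ to $\mathbf{1}_O$; dominated convergence then yields $\psi\e\to\mathbf{1}_O$ strongly in $L^r(\mathbb{R}^d)$ for every $r<\infty$.

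For part (ii), since $\mathcal{T}\e u\e\to U$ strongly in $L^p(\Omega\times\mathbb{R}^d)$ and $V$ is Carath\'eodory with $p$-growth, the classical continuity of the Nemytskii operator gives $V(\omega,\mathcal{T}\e u\e)\to V(\omega,U)$ in $L^1(\Omega\times\mathbb{R}^d)$. Writing
\begin{equation*}
\psi\e V(\omega,\mathcal{T}\e u\e)-\mathbf{1}_O V(\omega,U)=\psi\e\bigl(V(\omega,\mathcal{T}\e u\e)-V(\omega,U)\bigr)+(\psi\e-\mathbf{1}_O)V(\omega,U),
\end{equation*}
the first summand tends to zero in $L^1$ because $\|\psi\e\|_{L^\infty}\leq 1$, while the second vanishes by dominated convergence applied to the fixed integrand $V(\omega,U)\in L^1(\Omega\times\mathbb{R}^d)$; integration delivers the stated equality.

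For part (i), after subtracting a constant I may assume $V\geq 0$. For every compact set $K$ contained in the interior of $O$, the elementary bound $|x-\lfloor x\rfloor\e|\leq\sqrt{d}\,\varepsilon$ together with the inclusion $O\subset O^{+\varepsilon}$ forces $\psi\e\equiv 1$ on $K$ for all sufficiently small $\varepsilon$, hence
\begin{equation*}
\liminf_{\varepsilon\to 0}\expect{\int_{\mathbb{R}^d}\psi\e V(\omega,\mathcal{T}\e u\e)dx}\geq \liminf_{\varepsilon\to 0}\expect{\int_K V(\omega,\mathcal{T}\e u\e)dx}.
\end{equation*}
Since $\mathcal{T}\e u\e\rightharpoonup U$ weakly in $L^p(\Omega\times K)$ and $(\omega,F)\mapsto V(\omega,F)$ is a convex normal integrand with $p$-growth on the finite measure space $\Omega\times K$, the classical weak lower-semicontinuity of convex integral functionals (Ioffe's theorem) gives $\liminf\expect{\int_K V(\omega,\mathcal{T}\e u\e)dx}\geq\expect{\int_K V(\omega,U)dx}$; exhausting $O$ by such compacta $K\nearrow O$ and invoking monotone convergence (using $V\geq 0$) concludes the proof. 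The only genuinely delicate point is controlling the possible boundary-layer concentration of $V(\omega,\mathcal{T}\e u\e)$ on $O^{+\varepsilon}\setminus O$, and the compact-exhaustion lower bound is precisely designed to sidestep this difficulty; the remaining ingredients are standard facts about Nemytskii operators and convex integral functionals on finite measure spaces.
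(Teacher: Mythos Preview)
Your argument is correct and follows essentially the same route as the paper: both proofs rewrite the discrete integral via the unfolding transformation, handle the boundary layer by exhausting $O$ from the inside (your compacta $K$ play the role of the paper's sets $A_k\subset\subset O$), and then invoke weak lower-semicontinuity of the convex integral functional for (i) and strong continuity for (ii). One small imprecision: the assertion that $V(\omega,\mathcal{T}\e u\e)\to V(\omega,U)$ in $L^1(\Omega\times\mathbb{R}^d)$ is not literally true on the full space (the additive constant in the growth bound is not integrable there), but since all $\psi\e$ are supported in a fixed bounded neighborhood $B$ of $\overline O$, the Nemytskii continuity you need only has to hold on $\Omega\times B$, where it does.
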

The proof of this result is in Section \ref{proofs3.4}. We have applications in mind, where such integral functionals are treated and the role of $u\e$ is played by a discrete (symmetrized) gradient (see Section \ref{section:4}).
\subsection{Proofs}\label{proofs3.4} Before proving Proposition \ref{comp3}, we present a couple of auxiliary lemmas. The following commutator identity for $u_{\varepsilon}: \Omega\times \varepsilon\mathbb{Z}^{d}\to \mathbb{R}$, obtained by direct computation, is practical:
\begin{equation}\label{commutator}
\widetilde{\mathcal{T}}_{\varepsilon}\nabla^{\varepsilon}u_{\varepsilon}-\nabla^{\varepsilon}\widetilde{\mathcal{T}}_{\varepsilon}u_{\varepsilon}=\frac{1}{\varepsilon}D\widetilde{\mathcal{T}}_{\varepsilon}u_{\varepsilon}+(D_1\nabla^{\varepsilon}_1,...,D_d\nabla^{\varepsilon}_d)\widetilde{\mathcal{T}}_{\varepsilon}u_{\varepsilon}.
\end{equation} 
\begin{lemma}\label{compactness} Consider a sequence $u_{\varepsilon}\in L^{p}(\Omega \times \varepsilon \mathbb{Z}^d)$. Suppose that $u_{\varepsilon}\overset{2}{\rightharpoonup} U$ in $L^{p}(\Omega\times \mathbb{R}^d)$ and $\varepsilon \nabla^{\varepsilon}u_{\varepsilon}\overset{2}{\rightharpoonup} 0$ in $L^{p}(\Omega\times \mathbb{R}^d)^d$. Then $U \in L^p_{\mathsf{inv}}(\Omega)\otimes L^p(\mathbb{R}^d)$.
\end{lemma}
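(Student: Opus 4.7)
\smallskip
\textbf{Proof plan.} The goal is to show $DU=0$ in $L^p(\Omega\times\mathbb{R}^d)^d$, which, since $\ker D=L^p_{\mathsf{inv}}(\Omega)$ on $L^p(\Omega)$ and both factor spaces are separable, is equivalent to $U\in L^p_{\mathsf{inv}}(\Omega)\otimes L^p(\mathbb{R}^d)$. Equivalently, I will show that
\begin{equation*}
  \expect{\int_{\mathbb{R}^d}U(\omega,x)\,D_i^*\varphi(\omega)\,\eta(x)\,dx}=0
\end{equation*}
for all $i\in\{1,\dots,d\}$, $\varphi\in L^q(\Omega)$, and $\eta\in C_c^{\infty}(\mathbb{R}^d)$, and then invoke the adjointness $\expect{U\cdot D_i^*\varphi}=\expect{D_i U\cdot\varphi}$ together with density.

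\smallskip

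First, rewrite the test against $\mathcal T_\varepsilon u_\varepsilon$ as a sum on $\varepsilon\mathbb Z^d$: using $\mathcal T_\varepsilon=\overline{(\cdot)}\circ\widetilde{\mathcal T}_\varepsilon$ and $\int_{\mathbb R^d}\overline{v}\,\eta\,dx=\int_{\varepsilon\mathbb Z^d}v\,\pi_\varepsilon\eta\,dm_\varepsilon$, followed by the $\omega$-adjointness of $D_i^*$, I obtain
\begin{equation*}
  \expect{\int_{\mathbb R^d}\mathcal T_\varepsilon u_\varepsilon\,D_i^*\varphi\,\eta\,dx}
  =\expect{\int_{\varepsilon\mathbb Z^d}D_i(\widetilde{\mathcal T}_\varepsilon u_\varepsilon)\,\varphi\,\pi_\varepsilon\eta\,dm_\varepsilon}.
\end{equation*}
Next, the commutator identity \eqref{commutator} (in its $i$-th component) gives
\begin{equation*}
  D_i\widetilde{\mathcal T}_\varepsilon u_\varepsilon=\varepsilon\bigl(\widetilde{\mathcal T}_\varepsilon\nabla_i^\varepsilon u_\varepsilon-\nabla_i^\varepsilon\widetilde{\mathcal T}_\varepsilon u_\varepsilon-D_i\nabla_i^\varepsilon\widetilde{\mathcal T}_\varepsilon u_\varepsilon\bigr),
\end{equation*}
which, after substitution, splits the expression into three terms.

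\smallskip

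The first term reverses the unfolding/discretization to yield $\expect{\int_{\mathbb R^d}\mathcal T_\varepsilon(\varepsilon\nabla_i^\varepsilon u_\varepsilon)\,\varphi\,\eta\,dx}$, and this tends to $0$ by the hypothesis $\varepsilon\nabla^\varepsilon u_\varepsilon\overset{2}{\rightharpoonup}0$ since $\varphi\otimes\eta\in L^q(\Omega\times\mathbb R^d)$. The second term, after the discrete integration by parts for $\nabla_i^{\varepsilon}$ (noting that $\varphi$ does not depend on $x$), becomes $-\varepsilon\expect{\int_{\varepsilon\mathbb Z^d}\widetilde{\mathcal T}_\varepsilon u_\varepsilon\cdot\varphi\cdot\nabla_i^{\varepsilon,*}\pi_\varepsilon\eta\,dm_\varepsilon}$; since $\widetilde{\mathcal T}_\varepsilon$ is an isometry and $\|u_\varepsilon\|_{L^p(\Omega\times\varepsilon\mathbb Z^d)}$ is bounded (Lemma~\ref{basicfacts}(i)), while $\|\nabla_i^{\varepsilon,*}\pi_\varepsilon\eta\|_{L^\infty}\le\|\nabla\eta\|_{L^\infty}$ uniformly in $\varepsilon$ on the compactly supported region $\mathrm{supp}\,\eta$ enlarged by $\varepsilon$, this is $O(\varepsilon)$. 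The third term, after using adjointness of $D_i$ in $\omega$ and then integration by parts in $\nabla_i^\varepsilon$ again, reduces analogously to $-\varepsilon\expect{\int\widetilde{\mathcal T}_\varepsilon u_\varepsilon\cdot D_i^*\varphi\cdot\nabla_i^{\varepsilon,*}\pi_\varepsilon\eta\,dm_\varepsilon}$, which is also $O(\varepsilon)$ since $D_i^*$ is bounded on $L^q(\Omega)$.

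\smallskip

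Passing to the limit $\varepsilon\to 0$, the left-hand side converges to $\expect{\int_{\mathbb R^d}U\,D_i^*\varphi\,\eta\,dx}$ by the weak two-scale convergence $u_\varepsilon\overset{2}{\rightharpoonup}U$, while the right-hand side tends to zero. By adjointness this yields $\int_{\mathbb R^d}\expect{D_iU(\cdot,x)\,\varphi}\eta(x)\,dx=0$ for arbitrary $\varphi$ and $\eta$, hence $D_iU=0$ in $L^p(\Omega\times\mathbb R^d)$ for every $i$. Therefore $U(\cdot,x)\in L^p_{\mathsf{inv}}(\Omega)$ for a.e.\ $x$, which identifies $U$ as an element of $L^p_{\mathsf{inv}}(\Omega)\otimes L^p(\mathbb R^d)$. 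The only step requiring care is the estimation of the two $O(\varepsilon)$ error terms: one must ensure that the combined prefactor $\varepsilon$ beats the factor $1/\varepsilon$ implicitly sitting inside $\nabla_i^{\varepsilon,*}\pi_\varepsilon\eta$, and this works uniformly thanks to the smoothness and compact support of~$\eta$.
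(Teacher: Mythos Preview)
Your proof is correct. Both your argument and the paper's reduce the claim to the same orthogonality relation $\expect{\int_{\mathbb{R}^d}U\,D_i^*\varphi\,\eta\,dx}=0$, but the bookkeeping is organized differently. The paper introduces the folded test function $v_\varepsilon=\mathcal{F}_\varepsilon(\varphi\eta)$, pairs it against $\varepsilon\nabla_i^\varepsilon u_\varepsilon$, and then computes $\varepsilon\nabla_i^{\varepsilon,*}v_\varepsilon$ explicitly to show it strongly two-scale converges to $D_i^*\varphi\,\eta$; the conclusion follows from the weak--strong pairing (Lemma~\ref{basicfacts}(iv)). You instead push $D_i^*$ across to obtain $D_i(\widetilde{\mathcal{T}}_\varepsilon u_\varepsilon)$ and invoke the commutator identity~\eqref{commutator} to split this into the unfolded scaled gradient plus two $O(\varepsilon)$ remainders. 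Your route avoids the folding operator entirely and makes the role of the commutator identity explicit; the paper's route is slightly more in the spirit of unfolding (test functions are produced by $\mathcal{F}_\varepsilon$) and appeals to the already-established strong two-scale convergence of folded functions. Both are of comparable length; your last remark about ``$\varepsilon$ beating the $1/\varepsilon$ inside $\nabla_i^{\varepsilon,*}\pi_\varepsilon\eta$'' is harmlessly overstated---the difference quotient of $\pi_\varepsilon\eta$ is uniformly bounded by $\|\nabla\eta\|_{L^\infty}$, so those terms are genuinely $O(\varepsilon)$, not merely $O(1)$.
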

\begin{proof}
Since $L^{p}_{\mathsf{inv}}(\Omega)=\brac{\mathsf{ran} D^*}^{\bot}$, it suffices to show that 
\begin{equation}\label{claim1}
\expect{\int_{\mathbb{R}^d}U(\omega,x)D^*_i\varphi(\omega)\eta(x)dx}=0
\end{equation}
for any $\varphi\in L^q(\Omega)$, $\eta\in C^{\infty}_c(\mathbb{R}^d)$ and $i\in \cb{1,...,d}$.

We consider the sequence $v_{\varepsilon}=\mathcal{F}_{\varepsilon}(\varphi \eta)\in L^{q}(\Omega \times \varepsilon \mathbb{Z}^d)$ and by Lemma \ref{Propfold} (iii), we have $v_{\varepsilon}\overset{2}{\rightarrow} \varphi \eta$ in $L^{q}(\Omega\times \mathbb{R}^d)$. Therefore, using Lemma \ref{basicfacts} (iv), we obtain
\begin{eqnarray}\label{firstconv}
& & \varepsilon \expect{\int_{\varepsilon\mathbb{Z}^{d}} u_{\varepsilon}(\omega,x)  \nabla_i^{\varepsilon,*} v_{\varepsilon}(\omega,x)dm\e(x)}\nonumber \\ &=& \expect{\int_{\varepsilon\mathbb{Z}^{d}} (\varepsilon \nabla^{\varepsilon}_i u_{\varepsilon}(\omega,x)) v_{\varepsilon}(\omega,x)dm\e(x)}\rightarrow 0 \quad \text{as } \varepsilon\to 0.
\end{eqnarray}
Moreover, using the definition of $\mathcal{F}\e$,
\begin{align}\label{secondpart}
\begin{split}
\varepsilon \nabla_i^{\varepsilon,*} v_{\varepsilon}(\omega,x) & =\varphi(T_{\frac{x}{\varepsilon}-e_i}\omega)\pi_{\varepsilon}\eta(x-\varepsilon e_i)-\varphi(T_{\frac{x}{\varepsilon}}\omega)\pi_{\varepsilon}\eta(x) \\
& =\varepsilon \varphi(T_{\frac{x}{\varepsilon}-e_i}\omega) \nabla_i^{\varepsilon,*} \pi_{\varepsilon}\eta(x)+D^*_i\varphi(T_{\frac{x}{\varepsilon}}\omega) \pi_{\varepsilon}\eta(x) ,
\end{split}
\end{align}
which implies $\varepsilon \nabla_i^{\varepsilon,*} v_{\varepsilon} \overset{2}{\rightarrow}  D^*_i\varphi \eta$ in $L^{q}(\Omega\times \mathbb{R}^d)$.
Indeed, the first term on the right-hand side of (\ref{secondpart}) vanishes in the strong two-scale limit since $\eta$ is compactly supported and smooth. The second term strongly two-scale converges to $ D^*_i\varphi \eta$.
This and Lemma \ref{basicfacts} (iv) imply
\begin{equation*}
\lim_{\varepsilon\rightarrow0}\varepsilon \expect{\int_{\varepsilon\mathbb{Z}^{d}} u_{\varepsilon}(\omega,x)  \nabla_i^{\varepsilon,*} v_{\varepsilon}(\omega,x) dm\e(x)}=\expect{\int_{\mathbb{R}^d} U(\omega,x) D^*_i\varphi(\omega) \eta(x) dx},
\end{equation*}
which, together with (\ref{firstconv}), yields (\ref{claim1}).
\end{proof}
\begin{lemma}\label{lemma4}
Let $u_{\varepsilon}\in L^{p}(\Omega \times \varepsilon \mathbb{Z}^d)$ satisfy
\begin{equation*}
\limsup_{\varepsilon\rightarrow 0}\expect{\int_{\varepsilon\mathbb{Z}^{d}}|u_{\varepsilon}(\omega,x)|^{p}+|\nabla^{\varepsilon}u_{\varepsilon}(\omega,x)|^pdm_{\varepsilon}(x)}<\infty.
\end{equation*}
Then there exists $U \in L^p_{\mathsf{inv}}(\Omega)\otimes W^{1,p}(\mathbb{R}^d)$ such that, up to a subsequence, 
\begin{equation*}
u_{\varepsilon}\overset{2}{\rightharpoonup} U,\quad P_{\mathsf{inv}} u_{\varepsilon}\overset{2}{\rightharpoonup} U \text{ in } L^{p}(\Omega\times \mathbb{R}^d), \quad \nabla^{\varepsilon} P_{\mathsf{inv}} u_{\varepsilon} \overset{2}{\rightharpoonup} \nabla U \text{ in } L^{p}(\Omega\times \mathbb{R}^d)^d.
\end{equation*}
\begin{proof}
Step 1. We claim that $\widetilde{\mathcal{T}}_{\varepsilon}\circ P_{\mathsf{inv}}=P_{\mathsf{inv}}\circ \widetilde{\mathcal{T}}_{\varepsilon}=P_{\mathsf{inv}}$. By shift invariance, we have $\widetilde{\mathcal{T}}_{\varepsilon}\circ P_{\mathsf{inv}}=P_{\mathsf{inv}}$. Hence, it suffices to prove $P_{\mathsf{inv}}\circ \widetilde{\mathcal{T}}_{\varepsilon}=P_{\mathsf{inv}}$. Let $\eta \in L^q(\varepsilon\mathbb{Z}^{d})$, $\varphi \in L^q(\Omega)$ and $v_{\varepsilon}\in L^{p}(\Omega \times \varepsilon \mathbb{Z}^d)$. We have
\begin{align*}
\expect{\int_{\varepsilon\mathbb{Z}^{d}} P_{\mathsf{inv}}\widetilde{\mathcal{T}}_{\varepsilon}v_{\varepsilon}(\omega,x)  \varphi(\omega) \eta(x) dm_{\varepsilon}(x)} & =\expect{\int_{\varepsilon\mathbb{Z}^{d}} v_{\varepsilon}(T_{-\frac{x}{\varepsilon}}\omega,x)  P^*_{\mathsf{inv}}\varphi(\omega) \eta(x) dm_{\varepsilon}(x) } \\ &=\expect{\int_{\varepsilon\mathbb{Z}^{d}} v_{\varepsilon}(\omega,x) P^*_{\mathsf{inv}}\varphi(\omega) \eta(x) dm_{\varepsilon}(x) }\\ &=\expect{\int_{\varepsilon\mathbb{Z}^{d}}  P_{\mathsf{inv}}v_{\varepsilon}(\omega,x)  \varphi(\omega) \eta(x) dm_{\varepsilon}(x)}.
\end{align*}
Above, in the second equality we use the fact that $P_{\mathsf{inv}}^* \simeq P_{\mathsf{inv}}$ on  $L^q(\Omega)$ and therefore $\widetilde{\mathcal{T}}\e^{-1} P_{\mathsf{inv}}^*\varphi=P_{\mathsf{inv}}^*\varphi$. Consequently, by a density argument it follows that $P_{\mathsf{inv}}\circ  \widetilde{\mathcal{T}}_{\varepsilon}=P_{\mathsf{inv}}$.
\smallskip

Step 2. Convergence of $P_{\mathsf{inv}} u\e$. Using boundedness of $P_{\mathsf{inv}}$ and the fact that $\nabla^{\varepsilon}$ and $P_{\mathsf{inv}}$ commute, we obtain
\begin{equation*}
\limsup_{\varepsilon\rightarrow 0}\expect{\int_{\varepsilon\mathbb{Z}^{d}}|P_{\mathsf{inv}}u_{\varepsilon}(\omega,x)|^p+|\nabla^{\varepsilon}P_{\mathsf{inv}}u_{\varepsilon}(\omega,x)|^p dm\e(x)}<\infty.
\end{equation*}
Applying Lemma \ref{basicfacts} (ii) and Lemma \ref{compactness}, there exist $V\in L^p_{\mathsf{inv}}(\Omega)\otimes L^p(\mathbb{R}^d)$ and $\widetilde{V}\in L^{p}(\Omega\times \mathbb{R}^d)^d$ such that
\begin{equation}\label{equation:652}
 P_{\mathsf{inv}} u_{\varepsilon}\overset{2}{\rightharpoonup} V \quad \text{in } L^{p}(\Omega\times \mathbb{R}^d), \quad 
 \nabla^{\varepsilon}P_{\mathsf{inv}} u_{\varepsilon}\overset{2}{\rightharpoonup} \widetilde{V} \quad \text{in } L^{p}(\Omega\times \mathbb{R}^d)^d,
\end{equation}
for a (not relabeled) subsequence. Note that, additionally, we have $\widetilde{V}\in L^p_{\mathsf{inv}}(\Omega)\otimes L^p(\mathbb{R}^d)^d$.

Let $\varphi\in L^q(\Omega)$ and $\eta \in C_c^{\infty}(\mathbb{R}^d)$ and denote $v_{\varepsilon}=\mathcal{F}_{\varepsilon}(\varphi \eta)$. Since $v\e \overset{2}{\to} \eta \varphi$ (Lemma \ref{Propfold} (iii)), for $i=1,...,d$, we have
\begin{equation}\label{equation:6521}
\expect{\int_{\varepsilon\mathbb{Z}^{d}} \nabla^{\varepsilon}_i P_{\mathsf{inv}} u_{\varepsilon }(\omega,x) v_{\varepsilon}(\omega,x)dm\e(x)}\rightarrow \expect{\int_{\mathbb{R}^d}\widetilde{V}_i(\omega,x) \varphi(\omega) \eta(x) dx} \quad \text{as }\varepsilon\to 0.
\end{equation}
On the other hand, it holds that
\begin{align}\label{equation:6522}
\begin{split}
\expect{\int_{\varepsilon\mathbb{Z}^{d}} \nabla^{\varepsilon}_i P_{\mathsf{inv}} u_{\varepsilon }(\omega,x) v_{\varepsilon}(\omega,x)dm\e(x)}=&\expect{\int_{\varepsilon\mathbb{Z}^{d}} P_{\mathsf{inv}} u_{\varepsilon }(\omega,x)\varphi(\omega) \nabla_i^{\varepsilon,*}\pi\e\eta(x) dm\e(x)}\\ & \overset{(\varepsilon\rightarrow 0)}{\rightarrow} -\expect{\int_{\mathbb{R}^d}V(\omega,x) \varphi(\omega) \partial_i \eta(x) dx}.
\end{split}
\end{align}
The above convergence is obtained using that $\overline{P_{\mathsf{inv}}u\e} \rightharpoonup V$ weakly in $L^p(\Omega\times \mathbb{R}^d)$ (follows from (\ref{equation:652})) and $\overline{\nabla^{\varepsilon,*}_i \pi_{\varepsilon} \eta} \to -\partial_i \eta$ strongly in $L^q(\mathbb{R}^d)$. The latter may be shown as follows. We have
\begin{eqnarray*}
&& \| \overline{\nabla^{\varepsilon,*}_i \pi_{\varepsilon} \eta} + \partial_i \eta \|_{L^q(\mathbb{R}^d)} \\ & \leq &\norm{ \overline{\pi}_{\varepsilon}\brac{\frac{\eta(\cdot -\varepsilon e_i)-\eta(\cdot)}{\varepsilon}}+ \overline{\pi}_{\varepsilon} \partial_i \eta }_{L^q(\mathbb{R}^d)} + \norm{\overline{\pi}_{\varepsilon}\partial_i \eta-\partial_i \eta}_{L^q(\mathbb{R}^d)} \\
& \leq & \norm{ \frac{\eta(\cdot -\varepsilon e_i)-\eta(\cdot)}{\varepsilon}+ \partial_i \eta }_{L^q(\mathbb{R}^d)} + \norm{\overline{\pi}_{\varepsilon}\partial_i \eta-\partial_i \eta}_{L^q(\mathbb{R}^d)},
\end{eqnarray*}
where we used that $\overline{\pi}_{\varepsilon}$ is a contraction. Since $\eta \in C^{\infty}_{c}(\mathbb{R}^d)$, it follows (by a Taylor expansion argument) that both terms on the right-hand side of the above inequality vanish in the limit $\varepsilon\to 0$.

Combining (\ref{equation:6521}) and (\ref{equation:6522}), we conclude that $V\in L^p_{\mathsf{inv}}(\Omega)\otimes W^{1,p}(\mathbb{R}^d)$ and $\nabla V= \widetilde{V}$. 
\smallskip

Step 3. We show that 
$u_{\varepsilon}\overset{2}{\rightharpoonup} V \text{ in } L^{p}(\Omega\times \mathbb{R}^d)$ (up to another subsequence).
Using Lemmas \ref{basicfacts} (ii) and \ref{compactness}, we conclude that there exist another subsequence (not relabeled) and $U \in L^p_{\mathsf{inv}}(\Omega)\otimes L^p(\mathbb{R}^d)$ such that $u_{\varepsilon}\overset{2}{\rightharpoonup} U \text{ in } L^{p}(\Omega\times \mathbb{R}^d)$. Since $P_{\mathsf{inv}} $ is a linear and bounded operator, it follows that $P_{\mathsf{inv}} (\mathcal{T}_{\varepsilon}u_{\varepsilon})\rightharpoonup P_{\mathsf{inv}} U  \text{ in } L^{p}(\Omega\times \mathbb{R}^d)$, and $P_{\mathsf{inv}} U=U$ by shift invariance of $U$. Furthermore, by Steps 1 and 2 we have that $P_{\mathsf{inv}} \mathcal{T}_{\varepsilon}u_{\varepsilon}=\mathcal{T}_{\varepsilon}P_{\mathsf{inv}} u_{\varepsilon}\rightharpoonup V$ and therefore $U=V$. This completes the proof.
\end{proof}
\end{lemma}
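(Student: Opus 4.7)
The plan is to transfer the assumed bounds from $u_\varepsilon$ to its shift-invariant projection $P_{\mathsf{inv}} u_\varepsilon$, extract two-scale limits of both $P_{\mathsf{inv}} u_\varepsilon$ and $\nabla^{\varepsilon}P_{\mathsf{inv}} u_\varepsilon$, identify them as $U$ and $\nabla U$ for some $U\in L^p_{\mathsf{inv}}(\Omega)\otimes W^{1,p}(\mathbb{R}^d)$ via a test-function argument, and finally argue that $u_\varepsilon$ itself two-scale converges to the same $U$.

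First I would record the commutation identity $P_{\mathsf{inv}}\circ\widetilde{\mathcal{T}}_\varepsilon=\widetilde{\mathcal{T}}_\varepsilon\circ P_{\mathsf{inv}}=P_{\mathsf{inv}}$ on $L^p(\Omega\times\varepsilon\mathbb{Z}^d)$. The direction $\widetilde{\mathcal{T}}_\varepsilon P_{\mathsf{inv}}=P_{\mathsf{inv}}$ is immediate, since shift-invariant random variables are fixed by $T_{-x/\varepsilon}$; the opposite one follows by duality, using that $\widetilde{\mathcal{T}}_\varepsilon^{-1}$ likewise fixes shift-invariant functions in $L^q$ and that $P_{\mathsf{inv}}^*$ is itself a conditional expectation onto the shift-invariant part of $L^q(\Omega)$. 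Since $\overline{(\cdot)}$ and $\nabla^\varepsilon$ act only in $x$, they commute with $P_{\mathsf{inv}}$ for free, so $P_{\mathsf{inv}} u_\varepsilon$ inherits the hypothesis bound, and $\mathcal{T}_\varepsilon\circ P_{\mathsf{inv}}=P_{\mathsf{inv}}\circ\mathcal{T}_\varepsilon$.

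By Lemma \ref{basicfacts} (ii) applied to both $P_{\mathsf{inv}} u_\varepsilon$ and $\nabla^\varepsilon P_{\mathsf{inv}} u_\varepsilon$, along a common subsequence there exist $V\in L^p(\Omega\times\mathbb{R}^d)$ and $\widetilde{V}\in L^p(\Omega\times\mathbb{R}^d)^d$ with $P_{\mathsf{inv}} u_\varepsilon\overset{2}{\rightharpoonup}V$ and $\nabla^\varepsilon P_{\mathsf{inv}} u_\varepsilon\overset{2}{\rightharpoonup}\widetilde{V}$; since $\varepsilon\nabla^\varepsilon P_{\mathsf{inv}} u_\varepsilon\to 0$ in $L^p$, Lemma \ref{compactness} yields $V\in L^p_{\mathsf{inv}}(\Omega)\otimes L^p(\mathbb{R}^d)$. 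To promote $V$ to $W^{1,p}$ in $x$ and identify $\widetilde{V}=\nabla V$, I would test against $v_\varepsilon:=\mathcal{F}_\varepsilon(\varphi\eta)$ with $\varphi\in L^q(\Omega)$, $\eta\in C_c^\infty(\mathbb{R}^d)$, and integrate by parts discretely. The crucial simplification is that, by shift-invariance of $P_{\mathsf{inv}} u_\varepsilon$ in $\omega$, the a priori divergent $\varepsilon^{-1}D_i^*\varphi$ piece of $\nabla_i^{\varepsilon,*}v_\varepsilon$ annihilates against $P_{\mathsf{inv}} u_\varepsilon$ via the identity $P_{\mathsf{inv}}^*D_i^*=0$ (which reflects $L^p_{\mathsf{inv}}(\Omega)=\ker D$), leaving only $\varphi\nabla_i^{\varepsilon,*}\pi_\varepsilon\eta$. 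Combined with the elementary convergence $\overline{\nabla_i^{\varepsilon,*}\pi_\varepsilon\eta}\to-\partial_i\eta$ strongly in $L^q(\mathbb{R}^d)$ (a Taylor argument using contractivity of $\overline{\pi}_\varepsilon$), the limit identity becomes $\langle\widetilde{V}_i,\varphi\eta\rangle=-\langle V,\varphi\partial_i\eta\rangle$ for every test pair, whence $V\in L^p_{\mathsf{inv}}(\Omega)\otimes W^{1,p}(\mathbb{R}^d)$ and $\widetilde{V}=\nabla V$.

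Finally, passing to a further subsequence, Lemma \ref{basicfacts} (ii) yields $u_\varepsilon\overset{2}{\rightharpoonup}U'$; since $\nabla^\varepsilon u_\varepsilon$ is bounded, $\varepsilon\nabla^\varepsilon u_\varepsilon\overset{2}{\to}0$, so Lemma \ref{compactness} forces $U'\in L^p_{\mathsf{inv}}(\Omega)\otimes L^p(\mathbb{R}^d)$. Weak continuity of $P_{\mathsf{inv}}$ together with the commutation identity gives $\mathcal{T}_\varepsilon P_{\mathsf{inv}} u_\varepsilon=P_{\mathsf{inv}}\mathcal{T}_\varepsilon u_\varepsilon\rightharpoonup P_{\mathsf{inv}} U'=U'$, which forces $V=U'$; setting $U:=V$ delivers all three claimed convergences. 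The main obstacle is the identification step above, specifically why the would-be divergent $\varepsilon^{-1}D_i^*\varphi$ contribution in $\nabla_i^{\varepsilon,*}v_\varepsilon$ causes no trouble; the clean resolution is that $P_{\mathsf{inv}}^*D_i^*=0$, which renders this contribution invisible to the shift-invariant function $P_{\mathsf{inv}} u_\varepsilon$.
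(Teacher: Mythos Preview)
Your proposal is correct and follows essentially the same three-step strategy as the paper: establish the commutation $P_{\mathsf{inv}}\circ\widetilde{\mathcal T}_\varepsilon=\widetilde{\mathcal T}_\varepsilon\circ P_{\mathsf{inv}}=P_{\mathsf{inv}}$, extract and identify the two-scale limits of $P_{\mathsf{inv}}u_\varepsilon$ and $\nabla^\varepsilon P_{\mathsf{inv}}u_\varepsilon$ via testing against $\mathcal F_\varepsilon(\varphi\eta)$, and finally match the two-scale limit of $u_\varepsilon$ with $V$ using the commutation and shift-invariance of the limit. Your explanation of why the $\varepsilon^{-1}D_i^*\varphi$ contribution disappears (via $P_{\mathsf{inv}}^*D_i^*=0$, together with the commutation of $D_i^*$ with shifts) is in fact more explicit than the paper's presentation, which simply records the resulting identity $\langle \nabla_i^\varepsilon P_{\mathsf{inv}}u_\varepsilon,\,v_\varepsilon\rangle=\langle P_{\mathsf{inv}}u_\varepsilon,\,\varphi\,\nabla_i^{\varepsilon,*}\pi_\varepsilon\eta\rangle$ without spelling out this step.
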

\begin{proof}[Proof of Proposition \ref{comp3}] (i)
By Lemma \ref{lemma4} we deduce that there exists $U\in L^p_{\mathsf{inv}}(\Omega)\otimes W^{1,p}(\mathbb{R}^d)$ such that $u_{\varepsilon}\overset{2}{\rightharpoonup} U$ and by boundedness of $\nabla^{\varepsilon}u\e$ (Lemma \ref{basicfacts} (ii)) there exists $V\in L^{p}(\Omega\times \mathbb{R}^d)^d$ such that $\nabla^{\varepsilon}u_{\varepsilon}\overset{2}{\rightharpoonup} V$ (up to a subsequence).
In order to prove the claim, it suffices to show that
\begin{equation}\label{ortho}
\expect{\int_{\mathbb{R}^d} V(\omega,x)\cdot \eta(x)\varphi(\omega)dx}=\expect{\int_{\mathbb{R}^d} \nabla U(\omega,x)\cdot \eta(x)\varphi(\omega)dx}
\end{equation}
for any $\varphi \in L^q(\Omega)^d$ with $D^*\varphi=0$ and $\eta\in C_c^{\infty}(\mathbb{R}^d)$. Indeed, this implies that $\chi:=V-\nabla U\in \mathbf{L}^p_{\mathsf{pot}}(\Omega)\otimes L^p(\mathbb{R}^d)$ and thus the claim of the proposition.

For the argument consider $v_{\varepsilon}=\mathcal{F}_{\varepsilon}(\eta \varphi)$, the folding acting componentwise. Since $v_{\varepsilon} \overset{2}{\rightarrow} \eta \varphi$ (Lemma \ref{Propfold} (iii)),  
\begin{equation*}
\expect{\int_{\varepsilon\mathbb{Z}^{d}}\nabla^{\varepsilon} u_{\varepsilon}(\omega,x) \cdot v_{\varepsilon}(\omega,x) dm\e(x)}\rightarrow \expect{\int_{\mathbb{R}^d} V(\omega,x) \cdot \eta(x) \varphi(\omega) dx} \quad \text{as $\varepsilon\to 0$}.
\end{equation*}
On the other hand, the commutator identity (\ref{commutator}) and the definition of $\mathcal{F}\e$ yield
\begin{multline*}
 \expect{\int_{\varepsilon\mathbb{Z}^{d}}\nabla^{\varepsilon}u_{\varepsilon}(\omega,x) \cdot v_{\varepsilon}(\omega,x) dm\e(x)} \\
=  \bigg\langle \int_{\varepsilon\mathbb{Z}^{d}} \brac{ \nabla^{\varepsilon}\widetilde{\mathcal{T}}_{\varepsilon}u_{\varepsilon}(\omega,x)+\frac{1}{\varepsilon}D\widetilde{\mathcal{T}}_{\varepsilon}u_{\varepsilon}(\omega,x)+(D_1\nabla^{\varepsilon}_{1},...,D_d\nabla^{\varepsilon}_{d})\widetilde{\mathcal{T_{\varepsilon}}}u_{\varepsilon}(\omega,x)}\\ \cdot \pi_{\varepsilon} \eta(x) \varphi(\omega) dm\e(x)\bigg\rangle.
\end{multline*}
Since $D^*\varphi=0$, the contribution from the second term on the right-hand side above vanishes. After a discrete integration by parts, the right-hand side reduces to 
\begin{align*}
& \sum_{i=1}^{d}\expect{\int_{\varepsilon\mathbb{Z}^{d}} \brac{\widetilde{\mathcal{T}}_{\varepsilon}u_{\varepsilon}(\omega,x)+D_i\widetilde{\mathcal{T}}_{\varepsilon}u_{\varepsilon}(\omega,x)} \nabla_i^{\varepsilon,*} \pi_{\varepsilon}\eta(x) \varphi_i(\omega) dm\e(x)}\\ & {\rightarrow} -\sum_{i=1}^{d}\expect{\int_{\mathbb{R}^d} \brac{ U(\omega,x)+D_iU(\omega,x)} \partial_i\eta(x) \varphi_i(\omega) dx} \quad \text{as }\varepsilon\to 0,
\end{align*}
which is concluded by using that $u_{\varepsilon}\overset{2}{\rightharpoonup} U$ and that $\eta$ is smooth and compactly supported. Since $U$ is shift-invariant, the second term on the right-hand side vanishes. After an integration by parts, we are able to infer (\ref{ortho}) and conclude the proof of part (i).
\smallskip

(ii) By Lemma \ref{basicfacts} (ii), there exists $U\in L^p(\Omega\times \mathbb{R}^d)$ such that $u\e \overset{2}{\rightharpoonup} U$ (up to a subsequence). Since $\gamma \in (0,1)$, $u\e$ satisfies the assumptions in Lemma \ref{compactness} and therefore $U \in L^p_{\mathsf{inv}}(\Omega)\otimes L^p(\mathbb{R}^d)$. With the help of (i), we obtain that for the sequence $v\e:=\varepsilon^{\gamma} u\e$, there exist $V \in  L^p_{\mathsf{inv}}(\Omega)\otimes W^{1,p}(\mathbb{R}^d)$ and $\chi \in \mathbf{L}^p_{\mathsf{pot}}(\Omega)\otimes L^p(\mathbb{R}^d)$ such that (up to another subsequence)
\begin{equation*}
v\e\overset{2}{\rightharpoonup} V \text{ in }L^p(\Omega\times \mathbb{R}^d), \quad \nabla^{\varepsilon} v\e \overset{2}{\rightharpoonup} \nabla V+ \chi \text{ in }L^p(\Omega\times \mathbb{R}^d)^d.
\end{equation*} 
However, using that $u\e \overset{2}{\rightharpoonup} U$, we conclude that $V=0$ and the claim is proved.
\smallskip

(iii) Lemma \ref{basicfacts} (ii) implies that there exist $U \in L^p(\Omega\times \mathbb{R}^d)$ and $V\in L^p(\Omega\times \mathbb{R}^d)^d$ such that (up to a subsequence)
\begin{equation*}
u\e \overset{2}{\rightharpoonup} U \text{ in }L^p(\Omega\times \mathbb{R}^d),\quad \varepsilon \nabla^{\varepsilon}u\e \overset{2}{\rightharpoonup} V \text{ in }L^p(\Omega\times \mathbb{R}^d)^d.
\end{equation*}
Following the same strategy as in Lemma \ref{compactness} it can be obtained that $V=DU$.
\smallskip

(iv) Lemma \ref{basicfacts} (ii) implies that there exists $U \in L^p(\Omega\times \mathbb{R}^d)$ such that $u\e\overset{2}{\rightharpoonup} U$ in $L^p(\Omega\times \mathbb{R}^d)$ (up to a subsequence). Also, using part (iii), for the sequence $v\e:=\varepsilon^{\gamma-1}u\e$, there exists $V\in  L^p(\Omega\times \mathbb{R}^d)$ such that (up to another subsequence)
\begin{equation*}
v\e \overset{2}{\rightharpoonup} V, \quad \varepsilon \nabla^{\varepsilon}v\e \overset{2}{\rightharpoonup} DV.
\end{equation*}
The fact that $u\e\overset{2}{\rightharpoonup} U$ implies that $V=0$ and the proof is complete.
\end{proof}
\smallskip

\begin{proof}[Proof of Corollary \ref{ergodic}]
(i) The claim follows directly from Lemmas \ref{lemma4} and \ref{lemma_equivalent_conv}.
\smallskip

(ii) Exploiting linearity and boundedness of $P_{\mathsf{inv}}$ and Step 1 in the proof of Lemma \ref{lemma4}, we obtain that
\begin{align*}
\overline{\expect{u\e}}=P_{\mathsf{inv}}\mathcal{T}\e u\e \rightharpoonup P_{\mathsf{inv}}U=U,\quad \overline{\expect{\varepsilon^{\gamma}\nabla^{\varepsilon}u\e}}=P_{\mathsf{inv}}\mathcal{T}\e \varepsilon^{\gamma}\nabla^{\varepsilon}u\e\rightharpoonup P_{\mathsf{inv}}\chi=\expect{\chi}.
\end{align*}
The above, Lemma \ref{lemma_equivalent_conv}, and the fact that $\expect{\chi}=0$ allow us to conclude the proof.
\smallskip

(iii) Lemma \ref{lemma4} implies that $\overline{\expect{u\e}}\rightharpoonup u$ and $\overline{\nabla^{\varepsilon}\expect{u\e}}\rightharpoonup \nabla u$ weakly in $L^p(\mathbb{R}^d)$. Lemma \ref{lemma_equivalent_conv} implies that $\widehat{\expect{u\e}}\rightharpoonup u$ weakly in $L^p(\mathbb{R}^d)$. Furthermore, for any $\eta \in L^q(\mathbb{R}^d)$ it holds that
\begin{equation*}
\int_{\mathbb{R}^d}\brac{\nabla \widehat{\expect{u\e}}(x)-\overline{\nabla^{\varepsilon}\expect{u\e}}(x)}\eta(x)dx \rightarrow 0 \text{ as }\varepsilon\to 0.
\end{equation*}
As a result of this, $\widehat{\expect{u\e}}\rightharpoonup u$ weakly in $W^{1,p}(\mathbb{R}^d)$. Rellich's embedding theorem implies that $\widehat{\expect{u\e}} \to U$ strongly in $L^p(Q)$ and using Lemma \ref{lemma_equivalent_conv} we conclude that $\overline{\expect{u\e}} \rightarrow U$ strongly in $L^p(Q)$.
\smallskip

(iv) We have by Jensen's inequality and boundedness of $\varphi$
\begin{equation*}
\int_{\mathbb{R}^d}|\expect{\overline{u}\e(\omega,x) \varphi(\omega)}-\expect{\varphi(\omega)}U(x)|^pdx\leq C \expect{\int_{\mathbb{R}^d}|\overline{u}\e(\omega,x)-U(x)|^p dx}.
\end{equation*}
The right-hand side of the above inequality equals $\expect{\int_{\mathbb{R}^d}|\mathcal{T}_{\varepsilon} u\e(\omega,x)-U(x)|^p dx}$ and therefore it vanishes as $\varepsilon\to 0$.
\end{proof}
\smallskip

Before presenting the proof of Proposition \ref{prop1}, we provide an auxiliary lemma providing a nonlinear approximation for $\chi$ in the case $\gamma=0$. 
\begin{lemma}[Nonlinear approximation]\label{lemmaaa}
 For $\chi \in \mathbf{L}^p_{\mathsf{pot}}(\Omega)\otimes
    L^p(\mathbb{R}^d)$ and $\delta>0$, there exists a sequence
    $g_{\delta,\varepsilon}\in L^{p}(\Omega \times \varepsilon \mathbb{Z}^d)$ such that
    \begin{equation*}
       \|g_{\delta,\varepsilon}\|_{L^{p}(\Omega \times \varepsilon \mathbb{Z}^d)}\leq \varepsilon C(\delta),  \quad
       \limsup_{\varepsilon\to 0}\|\mathcal{T}_{\varepsilon}
      \nabla^{\varepsilon}g_{\delta,\varepsilon}-\chi\|_{L^{p}(\Omega\times \mathbb{R}^d)^d}\leq
      \delta.
    \end{equation*}
\end{lemma}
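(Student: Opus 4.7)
The plan is to first reduce, by density, to a simple tensor situation, and then to write down $g_{\delta,\varepsilon}$ explicitly by sampling the $\Omega$-factor along the lattice and multiplying by the physical-space factor. The main work is then a discrete Leibniz rule, and a clean passage to the limit using the fact that the unfolding of $T_{x/\varepsilon}$ is the identity on $\varepsilon\mathbb{Z}^d$.

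\emph{Step 1 (Density reduction).} Since $\mathbf{L}^p_{\mathsf{pot}}(\Omega)=\overline{\mathsf{ran}\,D}^{L^p(\Omega)^d}$ and $C^\infty_c(\mathbb{R}^d)$ is dense in $L^p(\mathbb{R}^d)$, the algebraic tensor product $\mathsf{ran}\,D \overset{a}{\otimes} C^\infty_c(\mathbb{R}^d)$ is dense in $\mathbf{L}^p_{\mathsf{pot}}(\Omega)\otimes L^p(\mathbb{R}^d)$. Hence I may pick a finite sum
\begin{equation*}
\chi_{\delta}=\sum_{j=1}^{N} D\varphi_j\,\eta_j,\qquad \varphi_j\in L^p(\Omega),\ \eta_j\in C^\infty_c(\mathbb{R}^d),
\end{equation*}
with $\|\chi-\chi_\delta\|_{L^p(\Omega\times\mathbb{R}^d)^d}\leq \delta/2$.

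\emph{Step 2 (Construction and discrete Leibniz rule).} For $x\in\varepsilon\mathbb{Z}^d$ I define
\begin{equation*}
g_{\delta,\varepsilon}(\omega,x) \;:=\; \varepsilon \sum_{j=1}^{N} \varphi_j(T_{x/\varepsilon}\omega)\,\eta_j(x).
\end{equation*}
A direct computation using $T_{x/\varepsilon+e_i}=T_{e_i}\circ T_{x/\varepsilon}$ gives the discrete Leibniz rule
\begin{equation*}
\nabla^{\varepsilon}_i g_{\delta,\varepsilon}(\omega,x)
=\sum_{j=1}^{N}\Bigl(D_i\varphi_j(T_{x/\varepsilon}\omega)\,\eta_j(x+\varepsilon e_i)
+\varepsilon\,\varphi_j(T_{x/\varepsilon}\omega)\,\nabla^{\varepsilon}_i\eta_j(x)\Bigr).
\end{equation*}
Applying $\widetilde{\mathcal T}_\varepsilon$, and using that for $x\in\varepsilon\mathbb{Z}^d$ one has $T_{-x/\varepsilon}\circ T_{x/\varepsilon}=\mathrm{Id}$, I obtain
\begin{equation*}
\widetilde{\mathcal T}_\varepsilon\nabla^{\varepsilon}_i g_{\delta,\varepsilon}(\omega,x)
=\sum_{j=1}^{N}\Bigl(D_i\varphi_j(\omega)\,\eta_j(x+\varepsilon e_i)
+\varepsilon\,\varphi_j(\omega)\,\nabla^{\varepsilon}_i\eta_j(x)\Bigr),
\end{equation*}
and then $\mathcal T_\varepsilon\nabla^{\varepsilon}_i g_{\delta,\varepsilon}$ is the piecewise-constant extension of this expression.

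\emph{Step 3 (Passage to the limit and error control).} Since each $\eta_j\in C^\infty_c(\mathbb{R}^d)$, the piecewise-constant extension of $\eta_j(\cdot+\varepsilon e_i)$ converges to $\eta_j$ strongly in $L^p(\mathbb{R}^d)$, and $\varepsilon \overline{\nabla^{\varepsilon}_i \eta_j}\to 0$ strongly in $L^p(\mathbb{R}^d)$. Tensoring with the fixed $L^p(\Omega)$-factors $D_i\varphi_j$ resp.\ $\varphi_j$ yields
\begin{equation*}
\mathcal T_\varepsilon\nabla^{\varepsilon}g_{\delta,\varepsilon}\;\xrightarrow[\varepsilon\to 0]{}\;\chi_\delta \quad\text{strongly in } L^p(\Omega\times\mathbb{R}^d)^d,
\end{equation*}
and a triangle inequality with Step 1 gives $\limsup_{\varepsilon\to 0}\|\mathcal T_\varepsilon\nabla^{\varepsilon}g_{\delta,\varepsilon}-\chi\|_{L^p(\Omega\times\mathbb{R}^d)^d}\leq \delta/2\leq \delta$. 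Finally, since each $\eta_j$ is continuous and compactly supported, $\|\eta_j\|_{L^p(\varepsilon\mathbb{Z}^d)}\to\|\eta_j\|_{L^p(\mathbb{R}^d)}$ as $\varepsilon\to 0$, so the triangle inequality yields
\begin{equation*}
\|g_{\delta,\varepsilon}\|_{L^p(\Omega\times\varepsilon\mathbb{Z}^d)}\leq \varepsilon\sum_{j=1}^{N}\|\varphi_j\|_{L^p(\Omega)}\,\|\eta_j\|_{L^p(\varepsilon\mathbb{Z}^d)}\leq \varepsilon\, C(\delta).
\end{equation*}

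\emph{Expected main difficulty.} There is no serious analytic obstacle here; the construction is essentially dictated by the target identity. The only point requiring care is the double density reduction: one must simultaneously approximate in $\mathbf{L}^p_{\mathsf{pot}}(\Omega)$ (for which $\mathsf{ran}\,D$ is dense only after taking the closure) and in $L^p(\mathbb{R}^d)$ (where one needs $C^\infty_c$-regularity to make the translation $\eta_j(\cdot+\varepsilon e_i)\to\eta_j$ work strongly), so that the sampling $\varphi_j(T_{x/\varepsilon}\omega)$ decouples cleanly from $\eta_j(x)$ under unfolding.
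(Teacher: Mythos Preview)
Your proof is correct and follows essentially the same approach as the paper. The paper writes the approximant as $g_{\varepsilon}=\varepsilon\,\mathcal F_\varepsilon V$ with $V=\sum_j\varphi_j\eta_j$ (so that $\eta_j$ is replaced by its box-average $\pi_\varepsilon\eta_j$), whereas you sample $\eta_j$ pointwise on the lattice; since the $\eta_j$ are smooth and compactly supported this distinction is immaterial, and the Leibniz computation, the unfolding identity, and the limit argument are the same in both versions.
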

\begin{proof}
 Let $\chi \in \mathbf{L}^p_{\mathsf{pot}}(\Omega)\otimes L^p(\mathbb{R}^d)$ and $\delta>0$ be fixed. By density, there exists $V=\sum_{j=1}^{n} \varphi_j \eta_j$ with $\varphi_j \in L^p(\Omega)$, $\eta_j \in C_c^{\infty}(\mathbb{R}^d)$ and
\begin{equation*}
\|DV-\chi\|_{L^{p}(\Omega\times \mathbb{R}^d)^d}\leq \delta.
\end{equation*}
We define $g_{\varepsilon}:=\varepsilon \mathcal{F}\e V$ and remark that $\|g_{\varepsilon}\|_{L^{p}(\Omega \times \varepsilon \mathbb{Z}^d)}\leq \varepsilon \|V\|_{L^p(\Omega\times \mathbb{R}^d)}$, which follows from the boundedness of $\mathcal{F}\e$. This proves the first part.

Note that  $\nabla^{\varepsilon}g_{\varepsilon}(\omega,x)=D\pi\e V(T_{\frac{x}{\varepsilon}}\omega,x)+\varepsilon \nabla^{\varepsilon}\pi\e V(T_{\frac{x}{\varepsilon}}\omega,x)$ and therefore we obtain $\mathcal{T}_{\varepsilon} \nabla^{\varepsilon}g_{\varepsilon}(\omega,x)=\overline{\pi}_{\varepsilon}DV(\omega,x)+\varepsilon \overline{\nabla^{\varepsilon}\pi_{\varepsilon}V}(\omega,x)$. Hence 
\begin{eqnarray*}
&& \|\mathcal{T}_{\varepsilon} \nabla^{\varepsilon}g_{\varepsilon}-\chi\|_{L^{p}(\Omega\times \mathbb{R}^d)^d}\\ & \leq & \|\overline{\pi}\e DV-DV \|_{L^{p}(\Omega\times \mathbb{R}^d)^d}+\|DV-\chi\|_{L^{p}(\Omega\times \mathbb{R}^d)^d} +\varepsilon\|\nabla^{\varepsilon}\pi\e V\|_{L^{p}(\Omega \times \varepsilon \mathbb{Z}^d)^d}. 
\end{eqnarray*}
The first and last terms on the right-hand side above vanish as $\varepsilon\to 0$ and therefore the claim follows (since we can choose $\delta$ arbitrarily small). Indeed, for the first term it is sufficient to note that $DV$ is smooth and has compact support w.r.t. its $x$-variable. Also, the last term vanishes thanks to the boundedness of $\pi\e$ and the boundedness of difference quotients by gradients; specifically (for $i=1,...,d$)
\begin{align*}
\varepsilon^{p}\|\nabla_i^{\varepsilon}\pi\e V\|^p_{L^{p}(\Omega \times \varepsilon \mathbb{Z}^d)^d} & \leq C \varepsilon^p \expect{\int_{\mathbb{R}^d}\Big| \frac{V(\omega,x+\varepsilon e_i)-V(\omega,x)}{\varepsilon}\Big|^p dx}\\ & \leq  C\varepsilon^p \|\nabla V\|^p_{L^{p}(\Omega\times \mathbb{R}^d)^d}.
\end{align*}
\end{proof}
\smallskip

\begin{proof}[Proof of Proposition \ref{prop1}] 
In the following proof we appeal to discrete maximal $L^p$-regularity for the equation
\begin{equation*}
  \lambda u+\nabla^{\varepsilon,*}\nabla^{\varepsilon} u=\nabla^*F+g \quad \text{in }\varepsilon\mathbb{Z}^{d}, \quad (\text{for some }F\in L^p(\varepsilon\mathbb{Z}^{d})^d, g\in L^p(\varepsilon\mathbb{Z}^{d}))
\end{equation*}
in the form of
\begin{equation*}
  {\lambda}^\frac{1}{2}\|u\|_{L^p(\varepsilon\mathbb{Z}^{d})}+\|\nabla^{\varepsilon} u\|_{L^p(\varepsilon\mathbb{Z}^{d})^d}\leq C(d,p)\Big(\|F\|_{L^p(\varepsilon\mathbb{Z}^{d})^d}+{\lambda}^{-\frac{1}{2}}\|g\|_{L^p(\varepsilon\mathbb{Z}^{d})}\Big)
\end{equation*}
which is uniform in $\varepsilon$. For $p=2$ this is a standard a priori estimate. For $1<p<\infty$, in the continuum setting this is a classical result (see e.g. \cite[Chapter 4, Sec. 4, Theorem 2]{krylov2008lectures}), and follows from the Calder\'on-Zygmund estimate $\|\partial_{ij}u\|_{L^p(\mathbb{R}^d)}\leq C(d,p)\|\triangle u\|_{L^p(\mathbb{R}^d)}$. The estimate above follows by the same argument from the Calder\'on-Zygmund estimate for the discrete Laplacian on $\varepsilon\mathbb{Z}^{d}$, for the latter see e.g. \cite{gloria2015quantification, ben2017moment}.
\smallskip

(i) Let $2 \gamma <\alpha < 2$. For a given $\chi \in \mathbf{L}^p_{\mathsf{pot}}(\Omega)\otimes L^p(\mathbb{R}^d)$ we define $\mathcal{G}_{\varepsilon}^{\gamma}\chi:=u\e$ as the unique solution to the following equation in $L^{p}(\Omega \times \varepsilon \mathbb{Z}^d)$ (for $P$-a.e. $\omega\in\Omega$)
\begin{equation*}
\varepsilon^{-\alpha} u\e +\nabla^{\varepsilon,*}\nabla^{\varepsilon}u\e =\nabla^{\varepsilon,*}\varepsilon^{-\gamma} \mathcal{F}\e \chi \quad \text{in }\varepsilon\mathbb{Z}^{d}.
\end{equation*}
The discrete maximal $L^p$-regularity theory implies that
\begin{equation*}
\varepsilon^{-\frac{\alpha}{2}}\| u_{\varepsilon} \|_{L^{p}(\Omega \times \varepsilon \mathbb{Z}^d)}+\|\nabla^{\varepsilon}u_{\varepsilon}\|_{L^{p}(\Omega \times \varepsilon \mathbb{Z}^d)^d} \leq \varepsilon^{-\gamma} C \|\mathcal{F}\e \chi \|_{L^p(\Omega\times \varepsilon\mathbb{Z}^{d})^d}.
\end{equation*} 
As a result of this, we have $\| u_{\varepsilon} \|_{L^{p}(\Omega \times \varepsilon \mathbb{Z}^d)}\leq \varepsilon^{\frac{\alpha}{2}-\gamma}C\|\chi\|_{L^p(\Omega\times \mathbb{R}^d)^d}$ and therefore $u\e \overset{2}{\to} 0$ as $\varepsilon\to 0$.

We consider the sequence $g_{\delta,\varepsilon}$ from Lemma \ref{lemmaaa} corresponding to $\chi$.
Note that $w_{\delta,\varepsilon}:=u\e-\varepsilon^{-\gamma}g_{\delta,\varepsilon}$ is the unique solution in $L^{p}(\Omega \times \varepsilon \mathbb{Z}^d)$ to (for $P$-a.e. $\omega\in \Omega$)
\begin{equation*}
\varepsilon^{-\alpha} w_{\delta,\varepsilon}+\nabla^{\varepsilon,*}\nabla^{\varepsilon} w_{\delta,\varepsilon}=\nabla^{\varepsilon,*}\varepsilon^{-\gamma}(\mathcal{F}\e\chi-\nabla^{\varepsilon}g_{\delta,\varepsilon})-\varepsilon^{-\alpha-\gamma}g_{\delta,\varepsilon}\text{ in } \varepsilon\mathbb{Z}^{d}.
\end{equation*}
We employ again the discrete maximal $L^p$-regularity theory to obtain 
\begin{equation*}
\|\nabla^{\varepsilon}w_{\delta,\varepsilon}\|_{L^{p}(\Omega \times \varepsilon \mathbb{Z}^d)^d}\leq C \brac{\varepsilon^{-\gamma} \|\mathcal{F}\e \chi- \nabla^{\varepsilon}g_{\delta,\varepsilon}\|_{L^{p}(\Omega \times \varepsilon \mathbb{Z}^d)^d}+\varepsilon^{-\frac{\alpha}{2}-\gamma}\|g_{\delta,\varepsilon}\|_{L^{p}(\Omega \times \varepsilon \mathbb{Z}^d)}}.
\end{equation*}
Multiplication of the above inequality by $\varepsilon^{\gamma}$ yields
\begin{equation*}
\|\varepsilon^{\gamma}\nabla^{\varepsilon}u\e- \nabla^{\varepsilon}g_{\delta,\varepsilon}\|_{L^{p}(\Omega \times \varepsilon \mathbb{Z}^d)^d}\leq C \brac{\|\mathcal{F}\e \chi- \nabla^{\varepsilon}g_{\delta,\varepsilon}\|_{L^{p}(\Omega \times \varepsilon \mathbb{Z}^d)^d}+\varepsilon^{-\frac{\alpha}{2}}\|g_{\delta,\varepsilon}\|_{L^{p}(\Omega \times \varepsilon \mathbb{Z}^d)}}.
\end{equation*}
As a result of this and with help of the isometry property of $\mathcal{T}_{\varepsilon}$, we obtain
\begin{eqnarray*}
&& \|\mathcal{T}_{\varepsilon} \varepsilon^{\gamma}\nabla^{\varepsilon}u\e-\chi\|_{L^{p}(\Omega\times \mathbb{R}^d)^d} \\ 
& \leq &  C \brac{\|\mathcal{F}\e \chi- \nabla^{\varepsilon}g_{\delta,\varepsilon}\|_{L^{p}(\Omega \times \varepsilon \mathbb{Z}^d)^d}+\varepsilon^{-\frac{\alpha}{2}}\|g_{\delta,\varepsilon}\|_{L^{p}(\Omega \times \varepsilon \mathbb{Z}^d)}+\| \mathcal{T}_{\varepsilon} \nabla^{\varepsilon}g_{\delta,\varepsilon}-\chi \|_{L^{p}(\Omega\times \mathbb{R}^d)^d}}.
\end{eqnarray*}
Letting first $\varepsilon\to 0$ and then $\delta \to 0$, the right-hand side of the above inequality vanishes using Lemma \ref{lemmaaa}. This completes the proof of (i).
\smallskip

(ii)  We consider a sequence $U_{\delta}=\sum_{i=1}^{n(\delta)} \varphi_i^{\delta}\eta_i^{\delta}$ such that $\varphi_{i}^{\delta} \in L^p_{\mathsf{inv}}(\Omega)$, $\eta_i^{\delta}\in C^{\infty}_c(\mathbb{R}^d)$, and
\begin{equation*}
\|U_{\delta}-U\|_{L^{p}(\Omega\times \mathbb{R}^d)}+\|\nabla U_{\delta}-\nabla U\|_{L^{p}(\Omega\times \mathbb{R}^d)^d}\rightarrow 0 \quad \text{as }\delta \to 0.
\end{equation*}  
Using the triangle inequality, it follows that
\begin{multline}\label{expression}
 \|\mathcal{T}_{\varepsilon} \nabla^{\varepsilon}\mathcal{F}\e U-\nabla U\|_{L^{p}(\Omega\times \mathbb{R}^d)^d} \leq  \|\mathcal{T}_{\varepsilon} \nabla^{\varepsilon} \mathcal{F}\e U-\mathcal{T}_{\varepsilon} \nabla^{\varepsilon} \mathcal{F}\e U_{\delta}\|_{L^{p}(\Omega\times \mathbb{R}^d)^d} \\  +\|\mathcal{T}_{\varepsilon} \nabla^{\varepsilon} \mathcal{F}\e U_{\delta}-\nabla U_{\delta}\|_{L^{p}(\Omega\times \mathbb{R}^d)^d} +\|\nabla U_{\delta}-\nabla U\|_{L^{p}(\Omega\times \mathbb{R}^d)^d}.
\end{multline}
First, we treat the first term on the right-hand side. For $i=1,...,d$, by the isometry property of $\mathcal{T}_{\varepsilon}$ and contraction property of $\mathcal{F}\e$,
\begin{eqnarray}\label{inequa1111}
&& \|\mathcal{T}_{\varepsilon} \nabla^{\varepsilon}_i \mathcal{F}\e U-\mathcal{T}_{\varepsilon} \nabla^{\varepsilon}_i \mathcal{F}\e U_{\delta}\|_{L^p(\Omega\times \mathbb{R}^d)}^p \nonumber \\ & \leq & \expect{\int_{\mathbb{R}^d} \Big|\frac{U(\omega,x+\varepsilon e_i)-U_{\delta}(\omega,x+\varepsilon e_i)-U(\omega,x)+U_{\delta}(\omega,x)}{\varepsilon}\Big|^p dx} \nonumber \\ & \leq & C \expect{\int_{\mathbb{R}^d}|\partial_i U(\omega,x)-\partial_i U_{\delta}(\omega,x)|^p dx}.
\end{eqnarray}
The last inequality follows using the fact that for any function $\eta \in W^{1,p}(\mathbb{R}^d)$, we have $\eta(x+\varepsilon e_i)-\eta(x)=\varepsilon \int_{0}^1 \partial_i \eta(x+\varepsilon t e_i)dt$ and therefore $\int_{\mathbb{R}^d}|\frac{\eta(x+\varepsilon e_i)-\eta(x)}{\varepsilon}|^pdx \leq \int_0^1 \int_{\mathbb{R}^d}|\partial_i \eta(x+\varepsilon t e_i)|^p dx dt = \int_{\mathbb{R}^d}|\partial_i \eta(x)|^p dx$. Second, we compute ($i=1,...,d$)
\begin{eqnarray}\label{equation:841}
&& \mathcal{T}_{\varepsilon} \nabla^{\varepsilon}_i \mathcal{F}\e U_{\delta}(\omega,x)  \\ &=&\frac{1}{\varepsilon}\brac{\overline{\pi}_{\varepsilon} U_{\delta}(T_{e_i} \omega , x+\varepsilon e_i)- \overline{\pi} \e U_{\delta}(T_{e_i}\omega,x)}+\frac{1}{\varepsilon}\brac{\overline{\pi}\e U_{\delta}(T_{e_i}\omega,x)-\overline{\pi}\e U_{\delta}(\omega,x)}.\nonumber
\end{eqnarray}
The second part of the right-hand side of the above equality vanishes (for $P$-a.e. $\omega\in \Omega$) by shift invariance of $U_{\delta}$. Further, we have
\begin{eqnarray*}
&& \|\mathcal{T}_{\varepsilon} \nabla^{\varepsilon}_i \mathcal{F}\e U_{\delta}-\partial_i U_{\delta}\|_{L^{p}(\Omega\times \mathbb{R}^d)} \\ &\leq & \expect{\int_{\mathbb{R}^d}|\frac{\overline{\pi}_{\varepsilon} U_{\delta}(\omega , x+\varepsilon e_i)- \overline{\pi} \e U_{\delta}(\omega,x)}{\varepsilon}-\partial_i U_{\delta}(\omega,x)|^p dx}^{\frac{1}{p}}. 
\end{eqnarray*}
For any $\delta>0$ the last expression converges to 0 as $\varepsilon\rightarrow 0$ since $U_{\delta}$ is smooth in its $x$-variable.
Finally, in (\ref{expression}) we first let $\varepsilon\rightarrow 0$ and then $\delta \rightarrow 0$ to conclude the proof.
\smallskip

(iii) Let $0<\alpha<2\gamma$. For a given $U \in L^p_{\mathsf{inv}}(\Omega)\otimes L^p(\mathbb{R}^d)$ we define $\mathcal{F}\e^{\gamma}U:=u\e$ as the unique solution to the following equation in $L^p(\Omega\times \varepsilon\mathbb{Z}^{d})$ (for $P$-a.e. $\omega\in \Omega$)
\begin{equation*}
\varepsilon^{-\alpha}u\e+ \nabla^{\varepsilon,*}\nabla^{\varepsilon}u\e=\varepsilon^{-\alpha}\mathcal{F}\e U \quad \text{in }\varepsilon\mathbb{Z}^{d}.
\end{equation*} 
The maximal $L^p$-regularity theory and boundedness of $\mathcal{F}\e$ imply that
\begin{equation*}
\|\nabla^{\varepsilon}u\e\|_{L^p(\Omega\times \varepsilon\mathbb{Z}^{d})^d}\leq \varepsilon^{-\frac{\alpha}{2}} C \|U\|_{L^p(\Omega\times \mathbb{R}^d)}.
\end{equation*}
As a result of this and the isometry property of $\mathcal{T}_{\varepsilon}$, we obtain that $\varepsilon^{\gamma}\nabla^{\varepsilon}u\e \overset{2}{\to} 0$.

We consider a sequence $U_{\delta}=\sum_{i=1}^{n(\delta)} \varphi_i^{\delta}\eta_i^{\delta}$ such that $\varphi_{i}^{\delta} \in L^p_{\mathsf{inv}}(\Omega)$, $\eta_i^{\delta}\in C^{\infty}_c(\mathbb{R}^d)$, and
\begin{equation*}
\|U_{\delta}-U\|_{L^{p}(\Omega\times \mathbb{R}^d)} \rightarrow 0 \text{ as }\delta \to 0.
\end{equation*}  
Note that $w_{\delta,\varepsilon}:=u\e-\mathcal{F}\e U_{\delta}$ is the unique solution in $L^p(\Omega\times \varepsilon\mathbb{Z}^{d})$ to (for $P$-a.e. $\omega\in \Omega$)
\begin{equation*}
\varepsilon^{-\alpha}w_{\delta,\varepsilon}+\nabla^{\varepsilon,*}\nabla^{\varepsilon} w_{\delta,\varepsilon}=\varepsilon^{-\alpha}\brac{\mathcal{F}\e U-\mathcal{F}\e U_{\delta}}-\nabla^{\varepsilon,*}\nabla^{\varepsilon}\mathcal{F}\e U_{\delta} \quad \text{in }\varepsilon\mathbb{Z}^{d}.
\end{equation*}
The maximal $L^p$-regularity theory implies that
\begin{equation*}
\varepsilon^{-\frac{\alpha}{2}}\|w_{\delta,\varepsilon}\|_{L^p(\Omega\times \varepsilon\mathbb{Z}^{d})}\leq C \brac{ \varepsilon^{-\frac{\alpha}{2}} \|\mathcal{F}\e U- \mathcal{F}\e U_{\delta} \|_{L^p(\Omega\times \varepsilon\mathbb{Z}^{d})}+\|\nabla^{\varepsilon}\mathcal{F}\e U_{\delta}\|_{L^p(\Omega \times \varepsilon\mathbb{Z}^{d})^d}}.
\end{equation*}
We multiply the above inequality by $\varepsilon^{\frac{\alpha}{2}}$ and use boundedness of $\mathcal{F}\e$, to obtain
\begin{equation*}
\|u\e- \mathcal{F}\e U_{\delta}\|_{L^p(\Omega\times \varepsilon\mathbb{Z}^{d})}\leq C \brac{\|U-U_{\delta}\|_{L^p(\Omega \times \mathbb{R}^d)}+\varepsilon^{\frac{\alpha}{2}}\|\nabla^{\varepsilon}\mathcal{F}\e U_{\delta}\|_{L^p(\Omega\times \varepsilon\mathbb{Z}^{d})^d}}.
\end{equation*}
Using the above inequality and the isometry property of $\mathcal{T}_{\varepsilon}$, we obtain
\begin{eqnarray*}
&& \| \mathcal{T}\e u\e- U\|_{L^p(\Omega\times \mathbb{R}^d)} \\ & \leq & C \brac{\|U-U_{\delta}\|_{L^p(\Omega \times \mathbb{R}^d)}+\varepsilon^{\frac{\alpha}{2}}\|\nabla^{\varepsilon}\mathcal{F}\e U_{\delta}\|_{L^p(\Omega\times \varepsilon\mathbb{Z}^{d})^d}+\|\mathcal{T}\e\mathcal{F}\e U_{\delta}-U_{\delta}\|_{L^p(\Omega \times \mathbb{R}^d)}}.
\end{eqnarray*}
Letting $\varepsilon\to 0$, the last two terms on the right-hand side of the above inequality vanish. Indeed, the middle term is bounded by $C \varepsilon^{\frac{\alpha}{2}}\|\nabla U_{\delta}\|_{L^p(\Omega\times \mathbb{R}^d)^d}$ (cf. part (ii) (\ref{inequa1111})) and the last term vanishes using Lemma \ref{Propfold} (iii). Finally, letting $\delta \to 0$ we conclude that $u\e \overset{2}{\to} U$.
\smallskip

(iv) We consider a sequence $U_{\delta}=\sum_{i=1}^{n(\delta)} \varphi_i^{\delta} \eta_i^{\delta}$ such that $\varphi_{i}^{\delta} \in L^p(\Omega)$, $\eta_i^{\delta}\in C^{\infty}_c(\mathbb{R}^d)$, and
\begin{equation*}
\|U_{\delta}-U\|_{L^{p}(\Omega\times \mathbb{R}^d)} \rightarrow 0 \text{ as }\delta \to 0.
\end{equation*}  
We have
\begin{multline}\label{equation:889}
 \| \mathcal{T}_{\varepsilon} \varepsilon^{\gamma} \nabla^{\varepsilon} \mathcal{F}\e U - a_{\gamma} DU \|_{L^p(\Omega\times \mathbb{R}^d)^d} \leq 
\| \mathcal{T}_{\varepsilon} \varepsilon^{\gamma} \nabla^{\varepsilon} \mathcal{F}\e (U-U_{\delta}) \|_{L^p(\Omega\times \mathbb{R}^d)^d} \\ +\|a_{\gamma} D \brac{U_{\delta} - U} \|_{L^p(\Omega\times \mathbb{R}^d)^d} +\| \mathcal{T}_{\varepsilon} \varepsilon^{\gamma} \nabla^{\varepsilon} \mathcal{F}\e U_{\delta}- a_{\gamma}DU_{\delta} \|_{L^p(\Omega\times \mathbb{R}^d)^d}.
\end{multline}
The first term on the right-hand side above is bounded by $\varepsilon^{\gamma-1} C \|U-U_{\delta}\|_{L^p(\Omega\times \mathbb{R}^d)}$ (using boundedness of all of the appearing operators). We compute, as in (\ref{equation:841}) (part (ii)), for $i=1,...,d$
\begin{equation*}
\mathcal{T}_{\varepsilon} \varepsilon^{\gamma} \nabla^{\varepsilon}_i \mathcal{F}\e U_{\delta}(\omega,x)=\varepsilon^{\gamma-1}\brac{\overline{\pi}_{\varepsilon} U_{\delta}(T_{e_i} \omega , x+\varepsilon e_i)- \overline{\pi} \e U_{\delta}(T_{e_i}\omega,x)}+ \varepsilon^{\gamma-1} \overline{\pi}\e D_i U_{\delta}(\omega,x).
\end{equation*}
As a result of this, we obtain 
\begin{eqnarray*}
& & \| \mathcal{T}_{\varepsilon} \varepsilon^{\gamma} \nabla^{\varepsilon}_i \mathcal{F}\e U_{\delta}- a_{\gamma}D_i U_{\delta} \|_{L^p(\Omega\times \mathbb{R}^d)} \\ & \leq & \varepsilon^{\gamma} \| \frac{\overline{\pi}\e U_{\delta}(\cdot, \cdot+\varepsilon e_i)-\overline{\pi}\e U_{\delta}(\cdot, \cdot)}{\varepsilon}\|_{L^p(\Omega\times \mathbb{R}^d)}+\|\varepsilon^{\gamma-1} \overline{\pi}\e D_i U_{\delta}-a_{\gamma}D_i U_{\delta}\|_{L^p(\Omega\times \mathbb{R}^d)}.
\end{eqnarray*}
The first term on the right-hand side above is bounded by $\varepsilon^{\gamma} C \|\nabla U_{\delta}\|_{L^p(\Omega\times \mathbb{R}^d)^d}$ and therefore it vanishes in the limit $\varepsilon\to 0$. The second term vanishes as well in the limit $\varepsilon\to 0$. 

Collecting the above claims and letting first $\varepsilon\to 0$ and then $\delta\to 0$ in (\ref{equation:889}), we conclude the proof.
\end{proof}
\smallskip

\begin{proof}[Proof of Corollary \ref{rem5}]
(i) and (ii) are obtained directly from Proposition \ref{prop1} and Lemma \ref{Propfold} (iii).
\smallskip

(iii) For $\delta>0$ we consider a cut-off function $\eta_{\delta} \in C_c^{\infty}(\mathbb{R}^d)$ such that $0\leq \eta_{\delta} \leq 1$, $\eta_{\delta}=0$ in $\mathbb{R}^d \setminus O$, $\eta_{\delta}=1$ in $O^{-\delta}:=\cb{x\in O: dist(x,\partial O)\geq \delta}$ and $|\nabla \eta_{\delta}|\leq \frac{C}{\delta}$.

 Also, by density we can choose a sequence $U_{\delta}(\omega,x)=\sum_{i=1}^{n(\delta)}\varphi_i^{\delta}(\omega) \xi_i^{\delta}(x)$ such that $\varphi_i^{\delta}\in L^p_{\mathsf{inv}}(\Omega)$ and $\xi_i^{\delta} \in C^{\infty}_c(\mathbb{R}^d)$, $dist(supp( U_{\delta}),\partial O)\geq \mu(\delta)$ (with $\mu(\delta)\to 0$ as $\delta\to 0$) and 
\begin{align*}
U_{\delta}\to U \quad \text{strongly in } L^p(\Omega)\otimes W^{1,p}(\mathbb{R}^d) \quad \text{as }\delta \to 0.
\end{align*}

Let $u_{\varepsilon,\delta}=\mathcal{F}\e U_{\delta} +\eta_{\delta} \mathcal{G}\e^{0} \chi$, where $\mathcal{G}\e^{0}$ denotes the operator given in Proposition \ref{prop1}. 
We have
\begin{multline}\label{estimate_cor3.3}
 \|u_{\varepsilon,\delta} -(\mathcal{F}\e U+\mathcal{G}\e^{0}\chi)\|_{L^p(\Omega\times \varepsilon\mathbb{Z}^{d})}+\|\nabla^{\varepsilon}(u_{\varepsilon,\delta}-(\mathcal{F}\e U+\mathcal{G}\e^{0}\chi))\|_{L^p(\Omega \times \varepsilon\mathbb{Z}^{d})^d}  \\
 \leq  \| \mathcal{F}\e U_{\delta}-\mathcal{F}\e U \|_{L^p(\Omega\times \varepsilon\mathbb{Z}^{d})}+ \| \brac{\eta_{\delta}-1}\mathcal{G}\e^{0}\chi\|_{L^p(\Omega\times \varepsilon\mathbb{Z}^{d})} + \|\nabla^{\varepsilon}\brac{\mathcal{F}\e U_{\delta}-\mathcal{F}\e U}\|_{L^p(\Omega\times \varepsilon\mathbb{Z}^{d})^d} \\
 + \|\nabla^{\varepsilon}\brac{\eta_{\delta}\mathcal{G}\e^{0}\chi-\mathcal{G}\e^{0} \chi}\|_{L^p(\Omega\times \varepsilon\mathbb{Z}^{d})^d}. 
\end{multline}
Above on the right-hand side, the first term can be bounded by $\| U_{\delta}- U \|_{L^p(\Omega\times \mathbb{R}^d)}$ (by boundedness of $\mathcal{F}\e$), the second term is bounded by $\|\mathcal{T}_{\varepsilon} \mathcal{G}\e^{0}\chi \|_{L^p(\Omega\times \mathbb{R}^d\setminus O^{-\delta})}$ (using the properties of $\eta_{\delta}$) and the third term is bounded by $C\|\nabla U_{\delta}- \nabla U\|_{L^p(\Omega\times \mathbb{R}^d)^d}$ (similarly as in (\ref{inequa1111})). The last term is treated as follows. We take advantage of the following product rule: For $f,g: \varepsilon\mathbb{Z}^{d} \to \mathbb{R}$ it holds that $\nabla^{\varepsilon}_i f(x)g(x)=f(x+\varepsilon e_i)\nabla^{\varepsilon}_ig(x)+g(x)\nabla^{\varepsilon}_i f(x)$. Consequently, we obtain
\begin{multline}\label{eqnarray1.1}
\|\nabla^{\varepsilon}\brac{\eta_{\delta} \mathcal{G}\e^{0}\chi - \mathcal{G}\e^{0}\chi }\|_{L^p(\Omega\times \varepsilon\mathbb{Z}^{d})^d}   \leq  \|\brac{\eta_{\delta}-1}\nabla^{\varepsilon} \mathcal{G}\e^{0}\chi \|_{L^p(\Omega\times \varepsilon\mathbb{Z}^{d})^d} \\ + C \sum_{i=1}^d \expect{\int_{\varepsilon\mathbb{Z}^{d} }|\mathcal{G}\e^{0} \chi(\omega,x+\varepsilon e_i)\nabla^{\varepsilon}_i \eta_{\delta}(x)|^pdm_{\varepsilon}(x)}^{\frac{1}{p}}.
\end{multline}
The first term on the right-hand side of (\ref{eqnarray1.1}) is bounded by $\|\mathcal{T}_{\varepsilon} \nabla^{\varepsilon} \mathcal{G}\e^{0}\chi \|_{L^p(\Omega\times \mathbb{R}^d\setminus O^{-\delta})^d}$ and for small enough $\varepsilon$, the second term is bounded by $\frac{C}{\delta}\|\mathcal{T}_{\varepsilon} \mathcal{G}\e^{0} \chi\|_{L^p(\Omega\times \mathbb{R}^d)}$. Note that 
\begin{equation*}
\limsup_{\delta\to 0} \limsup_{\varepsilon\to 0} \brac{\|\mathcal{T}_{\varepsilon} \nabla^{\varepsilon} \mathcal{G}\e^{0} \chi \|_{L^p(\Omega\times \mathbb{R}^d\setminus O^{-\delta})^d}+\frac{C}{\delta}\|\mathcal{T}_{\varepsilon} \mathcal{G}\e^{0} \chi\|_{L^p(\Omega\times \mathbb{R}^d)}}=0
\end{equation*}
since $\mathcal{T}_{\varepsilon} \mathcal{G}_{\varepsilon}^0 \to 0$ and $\mathcal{T}_{\varepsilon} \nabla^{\varepsilon}\mathcal{G}_{\varepsilon}^0 \to \chi$ as $\varepsilon\to 0$ (Proposition \ref{prop1} (i)).

Collecting all the above bounds for the inequality (\ref{estimate_cor3.3}), using the isometry property of $\mathcal{T}_{\varepsilon}$ and with the help of part (i), we obtain that 
\begin{eqnarray*}
&& \limsup_{\delta\to 0}\limsup_{\varepsilon \to 0} \brac{ \|\mathcal{T}_{\varepsilon} u_{\varepsilon,\delta} - U\|_{L^p(\Omega\times \mathbb{R}^d)}+\|\mathcal{T}_{\varepsilon} \nabla^{\varepsilon}u_{\varepsilon,\delta}-  \nabla U-\chi\|_{L^p(\Omega \times \mathbb{R}^d)^d}+g(\varepsilon, \delta)}\\&=&0,
\end{eqnarray*}
where $g(\varepsilon, \delta)=\twopartdef{0}{\varepsilon\leq \frac{\mu(\delta)}{C(d)}}{1}{\varepsilon>\frac{\mu(\delta)}{C(d)}}$ and $C(d)$ is the diameter of $\Box$. Hence, there exists a diagonal sequence $u\e:=u_{\varepsilon,\delta(\varepsilon)}$ which satisfies the claim of the corollary.
\smallskip

(iv) For a given $(U,\chi)\in ( L^p_{\mathsf{inv}}(\Omega)\otimes L^p(O))\times (\mathbf{L}^p_{\mathsf{pot}}(\Omega)\otimes L^p(O))$ we set $u\e(U,\chi)=\eta_{\delta(\varepsilon)}\brac{\mathcal{F}\e^{\gamma}U+\mathcal{G}\e^{\gamma}\chi}$, where $\eta_{\delta(\varepsilon)}$ is the cut-off function from part (iii) with $\delta(\varepsilon)=\varepsilon^{\frac{\gamma}{2}}$. For notational convenience, we write $u\e$ instead of $u\e(U,\chi)$. We have
\begin{eqnarray*}
& & \|\mathcal{T}_{\varepsilon} u\e- U\|_{L^p(\Omega\times \mathbb{R}^d)}+\|\mathcal{T}_{\varepsilon} \varepsilon^{\gamma} \nabla^{\varepsilon}u\e-\chi\|_{L^p(\Omega\times \mathbb{R}^d)^d} \nonumber \\ &\leq & \|\mathcal{T}_{\varepsilon} u\e- \mathcal{T}_{\varepsilon} \brac{\mathcal{F}\e^{\gamma}U+\mathcal{G}\e^{\gamma}\chi} \|_{L^p(\Omega\times \mathbb{R}^d)} + \|\mathcal{T}_{\varepsilon} \brac{\mathcal{F}\e^{\gamma}U+\mathcal{G}\e^{\gamma}\chi} - U\|_{L^p(\Omega\times \mathbb{R}^d)}\\ & & +\|\mathcal{T}_{\varepsilon} \varepsilon^{\gamma} \nabla^{\varepsilon}u\e-\mathcal{T}_{\varepsilon} \varepsilon^{\gamma} \nabla^{\varepsilon}\brac{\mathcal{F}\e^{\gamma}U+\mathcal{G}\e^{\gamma}\chi}\|_{L^p(\Omega\times \mathbb{R}^d)^d} \\ && +\|\mathcal{T}_{\varepsilon} \varepsilon^{\gamma}\nabla^{\varepsilon} \brac{\mathcal{F}\e^{\gamma}U +\mathcal{G}\e^{\gamma}\chi}-\chi\|_{L^p(\Omega\times \mathbb{R}^d)^d}. \nonumber
\end{eqnarray*}
The second and last terms on the right-hand side above vanish as $\varepsilon\to 0$ using the claim of part (ii). The first term is bounded by $ \|\mathcal{T}_{\varepsilon} \brac{\mathcal{F}\e^{\gamma}U+\mathcal{G}\e^{\gamma}\chi}\|_{L^p(\Omega\times \mathbb{R}^d\setminus O^{-\delta(\varepsilon)})} $ (cf. part (iii)) and therefore it vanishes as $\varepsilon\to 0$ using the fact that $\mathcal{T}_{\varepsilon} \brac{\mathcal{F}\e^{\gamma}U+\mathcal{G}\e^{\gamma}\chi}$ converges strongly (and therefore it is uniformly integrable). For small enough $\varepsilon$, the third term is bounded (up to a constant) by $\|\mathcal{T}_{\varepsilon} \varepsilon^{\gamma} \nabla^{\varepsilon}\brac{\mathcal{F}\e^{\gamma} U + \mathcal{G}\e^{\gamma}\chi} \|_{L^p(\Omega\times \mathbb{R}^d\setminus O^{-\delta(\varepsilon)})^d}+\varepsilon^{\frac{\gamma}{2}}\|\mathcal{T}_{\varepsilon} \brac{ \mathcal{F}\e^{\gamma}U+\mathcal{G}\e^{\gamma}\chi}\|_{L^p(\Omega\times \mathbb{R}^d)}$ (cf. (\ref{eqnarray1.1}) in part (iii)). The last expression vanishes in the limit $\varepsilon\to 0$ using the properties of $\mathcal{F}\e^{\gamma}U+\mathcal{G}\e^{\varepsilon}\chi$. The proof is complete.
\end{proof}
\smallskip

\begin{proof}[Proof of Proposition \ref{pro67}]
Let $\mathcal{E}\e(u\e):=\expect{\int_{O^{+\varepsilon}\cap \varepsilon\mathbb{Z}^{d} }V(T_{\frac{x}{\varepsilon}}\omega, u\e(\omega,x))dm_{\varepsilon}(x) }$.
\smallskip

(i)
Consider a sequence of open sets $A_k\subset \subset O$ which satisfy $A_k \subset A_{k+1}$ and $|O\setminus A_k|\to 0$ as $k \to \infty$. For small enough $\varepsilon$, we have
\begin{align*}
 \mathcal{E}\e(u\e)=& \expect{\int_{O^{+\varepsilon}\cap \varepsilon\mathbb{Z}^{d}}V(\omega, \widetilde{\mathcal{T}}\e u_{\varepsilon}(\omega,x))dm_{\varepsilon}(x)}\\ = & \sum_{x\in O^{+\varepsilon}\cap \varepsilon\mathbb{Z}^{d}} \expect{\int_{x+\varepsilon \Box} V(\omega,\mathcal{T}_{\varepsilon} u\e(\omega,x))dx} \\  = & \expect{\int_{A_k} V(\omega,\mathcal{T}_{\varepsilon} u_{\varepsilon}(\omega,x))dx}+\expect{\int_{L_{\varepsilon,k}} V(\omega,\mathcal{T}_{\varepsilon} u_{\varepsilon}(\omega,x))dx},
\end{align*}
for a suitable small boundary layer set $L_{\varepsilon,k} \subset \cb{x: dist(x, O^{+\varepsilon}\setminus A_k)<C_1 \varepsilon} $, where $C_1$ is a fixed constant depending only on the dimension $d$.
The growth conditions of $V$ yield 
\begin{equation*}
\mathcal{E}\e(u_{\varepsilon})\geq \expect{\int_{A_k}V(\omega,\mathcal{T}_{\varepsilon} u_{\varepsilon}(\omega,x))dx}-C|L_{\varepsilon,k}|.
\end{equation*}
Letting $\varepsilon \to 0$, we obtain
\begin{align*}
\liminf_{\varepsilon \to 0}\mathcal{E}\e(u\e) & \geq \liminf_{\varepsilon\to 0}\expect{\int_{A_k} V(\omega, \mathcal{T}_{\varepsilon} u\e(\omega,x))dx}- C |O\setminus A_k| \\ & \geq \expect{\int_{A_k} V(\omega,U(\omega,x))dx}-C |O\setminus A_k|.
\end{align*}
The last inequality is obtained using the fact that the functional $U\mapsto \expect{\int_{A_k} V(\omega, U)}$ is weakly lower-semicontinuous (since it is convex and continuous w.r.t. the strong $L^p$-topology). Finally, letting $k\to \infty$ we obtain the claimed result.
\smallskip

(ii) Similarly as in part (i), we find suitable boundary layer sets $L\e^+$ and $L\e^-$ (with $|L\e^{\pm}|\to 0$ as $\varepsilon\to 0$), such that
\begin{eqnarray*}
&& \mathcal{E}\e(u_{\varepsilon})\\ & = &\expect{\int_{O}V(\omega,\mathcal{T}\e u\e(\omega,x))dx}+ \expect{\int_{L\e^+}V(\omega,\mathcal{T}\e u\e(\omega,x))dx-\int_{L\e^-}V(\omega,\mathcal{T}\e u\e(\omega,x))dx} \\ & \leq & \expect{\int_{O}V(\omega,\mathcal{T}_{\varepsilon} u_{\varepsilon}(\omega,x))}+C(|L_{\varepsilon}^+|+|L_{\varepsilon}^-|)+ C\expect{\int_{L_{\varepsilon}^+}|\mathcal{T}_{\varepsilon}u_{\varepsilon}(\omega,x)|^p},
\end{eqnarray*}
where we use the growth conditions of the integrand $V$.
The terms in the middle on the right-hand side vanish as $\varepsilon\to 0$, as well as the last term (using strong convergence of $\mathcal{T}_{\varepsilon} u\e$). As a result of this and using strong continuity of $U \mapsto \expect{\int_{O}V(\omega,U)}$, we obtain that $\limsup_{\varepsilon\to 0}\mathcal{E}\e(u\e)\leq \expect{\int_O V(\omega,U(\omega,x))dx}$. Using part (i), we conclude the proof.
\end{proof}
\section{Stochastic homogenization of spring networks}\label{section:4}
In this section, we illustrate the capabilities of the stochastic unfolding operator in homogenization of energy driven problems that invoke convex functionals. We treat a multidimensional analogon of the problem presented in the introduction - a network of springs which exhibit either elastic or elasto-plastic response. The material coefficients are assumed to be rapidly oscillating random fields and we derive effective models in the sense of a discrete-to-continuum transition. 
\smallskip

In Section \ref{lattice_graphs}, we briefly present the setting of periodic lattice graphs and the corresponding differential calculus. If the springs display only elastic behavior and the forces acting on the system do not depend on time, the static equilibrium of the spring network is determined by a convex minimization problem. Accordingly, in Section \ref{section_870} we present homogenization results for convex functionals. On the other hand, in the case of elasto-plastic springs, the evolution of the system is embedded in the framework of evolutionary rate-independent systems (ERIS). A short description of that framework can be found in Appendix \ref{appendix2}, and for a detailed study we refer to \cite{mielke2005evolution,mielke2015rate}. In the limit, as the characteristic size of the springs vanishes, we obtain a homogenized model, which is also described by an ERIS on a continuum physical space (Section \ref{sectionERIS} and \ref{S:gradient}).
\smallskip

We remark that homogenization results concerning minimization problems in the discrete-to-continuum setting are already well established. Earlier works (e.g. \cite{alicandro2011integral}) treat more general problems than we do in this paper. Namely, the considered potentials might be nonconvex and if the media are ergodic, previous works feature quenched homogenization results. This means that for almost every configuration, the energy functional $\Gamma$-converges to a homogenized energy functional (cf. Remark \ref{remark2000}). In our results for convex minimization problems, we obtain weaker, averaged homogenization results (see Section \ref{section_870}). Despite our results being weaker in a general situation, we would like to point out that our strategy relies only on the simple idea of the unfolding operator; namely the compactness properties of the unfolding (which are closely related to compactness statements in usual $L^p$-spaces) and lower-semicontinuity of convex functionals play a central role in our analysis. On the other hand, previous works are based on more involved techniques, such as the subadditive ergodic theorem \cite{akcoglu1981ergodic} (\cite{alicandro2011integral,neukamm2016stochastic}), or the notion of quenched stochastic two-scale convergence \cite{heida2017stochastic}.
\subsection{Functions on lattice graphs}\label{lattice_graphs}
Let $\mathsf{E}_0=\cb{b_1,...,b_k}\in \mathbb{Z}^d\setminus \cb{0}$ be the \textit{edge generating set} and we assume that $\mathsf{E}_0$ includes $\cb{e_i}_{i=1,...,d}$. We consider the rescaled periodic lattice graph $(\varepsilon \mathbb{Z}^d ,\varepsilon\mathsf{E} )$, where the set of edges is given by $\mathsf{E}=\cb{[x,x+b_i]:x\in \mathbb{Z}^d, i=1,...,k}$. For $u:\varepsilon\mathbb{Z}^{d}\rightarrow \mathbb{R}^d$ the difference quotient along the edge generated by $b_i$ is
\begin{equation*}
\partial_i^{\varepsilon} u:\varepsilon\mathbb{Z}^{d} \rightarrow \mathbb{R}^d, \quad
\partial_i^{\varepsilon} u(x)=\frac{u(x+\varepsilon b_i)-u(x)}{\varepsilon|b_i|}.
\end{equation*}
Note that for each $b_i$ there exists $B_i:\mathbb{Z}^d \rightarrow \mathbb{Z}^d$ such that\footnote{$B_i$ are not uniquely determined, however we consider one such choice corresponding to a path between $0$ and $b_i$.}
\begin{equation*}
\partial_i^{\varepsilon} u(x)=\sum_{y\in\mathbb{Z}^{d}}\nabla^{\varepsilon}u(x-\varepsilon y)B_i(y),
\end{equation*}
where the discrete gradient $\nabla^{\varepsilon}u$ is defined as in Section \ref{Physical_space}.
We define the \textit{discrete symmetrized gradient} as $\nabla_{s}^{\varepsilon} u:\varepsilon\mathbb{Z}^{d}\rightarrow \mathbb{R}^k$ 
\begin{equation*}
\nabla_{s}^{\varepsilon} u(x)=\brac{\frac{b_1}{|b_1|}\cdot \partial_1^{\varepsilon} u(x),...,\frac{b_k}{|b_k|}\cdot \partial_k^{\varepsilon} u(x)}.
\end{equation*}
Furthermore, we introduce a suitable symmetrization operator for random fields as follows. For a matrix $F \in \mathbb{R}^{d\times d}$, we denote by $F_s\in \mathbb{R}^k$ the vector with entries $(F_s)_i=\frac{b_i}{|b_i|}\cdot F \frac{b_i}{|b_i|}$ ($i=1,...,k$). Analogously, for $F:\Omega\rightarrow \mathbb{R}^{d\times d}$ measurable, we set $ F_s:\Omega\rightarrow \mathbb{R}^k$,
\begin{equation*}
(F_s)_i(\omega)=\frac{b_i}{|b_i|}\cdot \sum_{y\in\mathbb{Z}^d}F(T_{-y}\omega)B_i(y), \quad (i=1,...,k).
\end{equation*}
If $F=\nabla U$ or $F=DU$, instead of $F_s$ we write $\nabla_s U$ or $D_s U$.

Throughout the paper, we assume that $(\mathbb{Z}^d ,\mathsf{E})$ satisfies a discrete version of Korn's inequality:
\begin{align}
\text{There exists } & C(d,p)>0 \text{ such that} \text{ for all compactly supported } u:\mathbb{Z}^{d}\rightarrow \mathbb{R}^d \nonumber \\ & \int_{\mathbb{Z}^{d}} |\nabla u(x)|^p dm(x) \leq C(d,p) \int_{\mathbb{Z}^{d}} |\nabla_{s}u(x)|^p dm(x). \tag{Korn}\label{Korn_assumpt} 
\end{align}
\begin{remark} An example of a lattice satisfying the above Korn's inequality is $\brac{\mathbb{Z}^d,\mathsf{E}}$ with $\mathsf{E}_0=\cb{\sum_{i=1}^d \delta_i e_i: \delta \in \cb{0,1}^d\setminus 0}$.
\end{remark}
The assumption (\ref{Korn_assumpt}) implies a continuum version of Korn's inequality. Namely, let $O\subset \mathbb{R}^d$ be open and bounded, there exists $C(p)>0$ such that
\begin{equation}\label{continuum_korn}
\int_O |\nabla U|^p dx \leq C(p) \int_O|\nabla_s U|^p dx \quad \text{for any } U \in W^{1,p}_0(O).
\end{equation}
This inequality is obtained applying (\ref{Korn_assumpt}) to $\pi\e u_{\delta}$, where $u_{\delta}$ is a smooth approximation of $u$, and passing to the limits $\varepsilon\to 0$ and $\delta\to 0$ (cf. Lemma \ref{symmet} in Section \ref{section_1111}).
Note that (\ref{Korn_assumpt}) implies another, stochastic version of Korn's inequality - see Lemma \ref{kornstoch} in Section \ref{section_1111}.
\subsection{Static problem}\label{section_870}
As a preparation for the rate-independent evolutionary problem, we first discuss a homogenization procedure for a static convex minimization problem, and then discuss different notions of convergence in the homogenization result. 
We consider a set of particles with reference positions at $\varepsilon\mathbb{Z}^{d}$. It is assumed that the edges $\varepsilon \mathsf{E}$ represent springs with elastic response (cf. the introduction with internal variable $z=0$ and loading $l(t)=l$). The equilibrium state of the system is determined by a minimization problem which (in a slightly more general setting) reads as
\begin{multline}\label{minimum}
\min_{u\in \brac{L^p(\Omega)\otimes L^p_0(O\cap \varepsilon\mathbb{Z}^{d})}^d}\bigg\langle \int_{O^{+\varepsilon}\cap \varepsilon\mathbb{Z}^{d}} V(T_{\frac{x}{\varepsilon}}\omega,\nabla_{s}^{\varepsilon} u(\omega,x))dm_{\varepsilon}(x) \\ -\int_{O\cap \varepsilon\mathbb{Z}^{d}}l\e(\omega,x)\cdot u(\omega,x)dm\e(x)\bigg\rangle.
\end{multline}
We assume the following:
\begin{itemize}
\item[(A0)] $O\subset \mathbb{R}^d$ is a bounded domain with Lipschitz boundary. 

We set $O^{+\varepsilon}=O\cup \cb{x \in \mathbb{R}^d: (x,x+\varepsilon b_i)\cap O\neq \emptyset \text{ for some }b_i \in \mathsf{E}_0}$.
\item[(A1)] $V:\Omega \times \mathbb{R}^k \rightarrow \mathbb{R}$ is jointly measurable (i.e.,  w.r.t. the product $\sigma$-algebra $\mathcal{F}\otimes \mathcal{B}(\mathbb{R}^k)$).
\item[(A2)]For $P$-a.e. $\omega\in \Omega$, $V(\omega,\cdot)$ is convex.
\item[(A3)] There exists $C>0$ such that
\begin{equation*}
\frac{1}{C}|F|^p-C\leq V(\omega,F) \leq C(|F|^p+1),
\end{equation*}
for $P$-a.e. $\omega\in \Omega$ and all $F\in \mathbb{R}^k$.
\end{itemize}
In the case that the loading $l\e$ converges in a sufficiently strong sense (see Remark \ref{remark_930}), in order to describe the asymptotic behavior of minimizers in (\ref{minimum}), it is sufficient to consider the energy functional $\mathcal{E}\e:\brac{L^p(\Omega)\otimes L^p_0(O\cap \varepsilon\mathbb{Z}^{d})}^d \rightarrow \mathbb{R},$
\begin{equation*}
\mathcal{E}\e(u)=\expect{\int_{O^{+\varepsilon}\cap \varepsilon\mathbb{Z}^{d}} V(T_{\frac{x}{\varepsilon}}\omega,\nabla_{s}^{\varepsilon} u(\omega,x))dm_{\varepsilon}(x)}.
\end{equation*}
As shown below for $\varepsilon\rightarrow 0$, we derive the effective two-scale functional 
\begin{align*}
& \mathcal{E}_0:(L^p_{\mathsf{inv}}(\Omega)\otimes W^{1,p}_0(O))^d \times(\mathbf{L}^p_{\mathsf{pot}}(\Omega)\otimes L^p(O))^d \rightarrow \mathbb{R},\\
& \mathcal{E}_0(U,\chi)=\expect{\int_{O} V(\omega,\nabla_s U(\omega,x) + \chi_s(\omega,x)) dx}.
\end{align*}
Moreover, if we assume that $\expect{\cdot}$ is ergodic, the effective energy reduces to a single-scale functional
 \begin{equation*}
\mathcal{E}_{\mathsf{hom}}:W^{1,p}_0(O)^d \rightarrow \mathbb{R}, \quad \mathcal{E}_{\mathsf{hom}}(U)=\int_{O} V_{\mathsf{hom}}(\nabla U(x)) dx,
\end{equation*}
where the homogenized energy density $V_{\mathsf{hom}}:\mathbb{R}^{d\times d}\rightarrow \mathbb{R}$ is defined by the corrector problem
\begin{equation}\label{problem}
V_{\mathsf{hom}}(F)=\inf_{\chi\in \mathbf{L}^p_{\mathsf{pot}}(\Omega)^d}\expect{ V(\omega, F_s + \chi_s(\omega))}.
\end{equation}

\begin{theorem}[Two-scale homogenization]\label{gammatheorem}
Assume $(A0)$-$(A3)$.
\begin{enumerate}[(i)]
\item (Compactness) For $u_{\varepsilon}\in \brac{L^p(\Omega)\otimes L^p_0(O\cap \varepsilon\mathbb{Z}^{d})}^d$ with $\limsup_{\varepsilon\rightarrow 0}\mathcal{E}\e(u_{\varepsilon})<\infty$, there exist a subsequence (not relabeled), $U\in (L^p_{\mathsf{inv}}(\Omega)\otimes W^{1,p}_0(O))^d$ and $\chi \in (\mathbf{L}^p_{\mathsf{pot}}(\Omega)\otimes L^p(O))^d$ such that 
\begin{equation}\label{c5}
u_{\varepsilon} \overset{2}{\rightharpoonup} U \text{ in }L^{p}(\Omega\times \mathbb{R}^d)^d, \quad \nabla^{\varepsilon}u_{\varepsilon} \overset{2}{\rightharpoonup} \nabla U+\chi \text{ in } L^{p}(\Omega\times \mathbb{R}^d)^{d\times d}.
\end{equation}
\item (Lower bound) Assume that the convergence (\ref{c5}) holds for the whole sequence $u_{\varepsilon}$. Then 
\begin{equation*}
 \liminf_{\varepsilon\rightarrow 0} \mathcal{E}\e(u_{\varepsilon})\geq \mathcal{E}_{0}(U,\chi).
\end{equation*}
\item (Upper bound) For any $U\in (L^p_{\mathsf{inv}}(\Omega)\otimes W^{1,p}_0(O))^d$ and $\chi\in(\mathbf{L}^p_{\mathsf{pot}}(\Omega)\otimes L^p(O))^d$, there exists a sequence $ u_{\varepsilon}\in \brac{L^p(\Omega)\otimes L^p_0(O\cap \varepsilon\mathbb{Z}^{d})}^d$ such that
\begin{align*}
& u_{\varepsilon} \overset{2}{\rightarrow} U \text{ in } L^p(\Omega\times \mathbb{R}^d)^d, \quad 
\nabla^{\varepsilon}u_{\varepsilon}\overset{2}{\rightarrow} \nabla U+\chi \text{ in } L^{p}(\Omega\times \mathbb{R}^d)^{d\times d}, \\
& \lim_{\varepsilon\rightarrow 0}\mathcal{E}\e(u_{\varepsilon})=\mathcal{E}_{0}(U,\chi).
\end{align*}
\end{enumerate}
\end{theorem}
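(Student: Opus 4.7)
The plan is to reduce all three parts of Theorem~\ref{gammatheorem} to the compactness and closure theory of Section~\ref{gradients} combined with the semicontinuity statement Proposition~\ref{pro67}. A key auxiliary fact I would establish once and for all is the following \emph{symmetrization transfer}: if $\nabla^{\varepsilon} u_{\varepsilon} \overset{2}{\rightharpoonup} \nabla U + \chi$ (resp.\ $\overset{2}{\to}$) in $L^p(\Omega\times\mathbb{R}^d)^{d\times d}$, then $\nabla_s^{\varepsilon} u_{\varepsilon} \overset{2}{\rightharpoonup} \nabla_s U + \chi_s$ (resp.\ $\overset{2}{\to}$) in $L^p(\Omega\times\mathbb{R}^d)^k$. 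This is the step I expect to require the most care, since the random symmetrization $(\cdot)_s$ is defined through the finite stencils $B_i$ that entangle lattice shifts in $x$ with $T$-shifts in $\omega$. The identification rests on the elementary commutation $\mathcal{T}_{\varepsilon}(v_{\varepsilon}(\cdot+\varepsilon y))(\omega,x) = (\mathcal{T}_{\varepsilon} v_{\varepsilon})(T_{-y}\omega,x) + r_{\varepsilon}$, with a boundary remainder $r_{\varepsilon}\to 0$ in $L^p$, combined with measure-preservation of $T$; together these allow weak and strong $\overset{2}{\to}$ to pass through the finite sum that defines $(\cdot)_s$.

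For part~(i), the coercivity in (A3) yields $\limsup_{\varepsilon\to 0}\expect{\int_{O^{+\varepsilon}\cap \varepsilon\mathbb{Z}^{d}}|\nabla_s^{\varepsilon} u_{\varepsilon}|^p\, dm_{\varepsilon}}<\infty$. Since $u_{\varepsilon}$ vanishes outside $O^{+\varepsilon}$, applying the discrete Korn inequality~(\ref{Korn_assumpt}) $\omega$-wise (to the trivial extension by zero) promotes this to a uniform bound on $\nabla^{\varepsilon} u_{\varepsilon}$ in $L^p(\Omega\times\varepsilon\mathbb{Z}^{d})^{d\times d}$, and the Dirichlet version of discrete Poincar\'e bounds $u_{\varepsilon}$ itself in $L^p$. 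Corollary~\ref{cor111} with $\gamma=0$, applied componentwise, then extracts a subsequence along which the claimed convergence~(\ref{c5}) holds with $U \in (L^p_{\mathsf{inv}}(\Omega)\otimes W^{1,p}_0(O))^d$ and $\chi \in (\mathbf{L}^p_{\mathsf{pot}}(\Omega)\otimes L^p(O))^{d\times d}$.

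For part~(ii), the symmetrization transfer gives $\nabla_s^{\varepsilon} u_{\varepsilon} \overset{2}{\rightharpoonup} \nabla_s U + \chi_s$. Proposition~\ref{pro67}~(i) applied to the convex integrand $V$ and to this sequence on the sets $O^{+\varepsilon}\supset O$ then immediately yields
\begin{equation*}
\liminf_{\varepsilon\to 0}\mathcal{E}_{\varepsilon}(u_{\varepsilon}) \geq \expect{\int_O V(\omega,\nabla_s U(\omega,x)+\chi_s(\omega,x))\,dx} = \mathcal{E}_0(U,\chi).
\end{equation*}

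For part~(iii), Corollary~\ref{rem5}~(iii), applied componentwise, produces a sequence $u_{\varepsilon}\in (L^p(\Omega)\otimes L^p_0(O\cap\varepsilon\mathbb{Z}^{d}))^d$ with $u_{\varepsilon} \overset{2}{\to} U$ and $\nabla^{\varepsilon} u_{\varepsilon} \overset{2}{\to} \nabla U + \chi$ \emph{strongly}. The symmetrization transfer (in its strong version) upgrades this to $\nabla_s^{\varepsilon} u_{\varepsilon} \overset{2}{\to} \nabla_s U + \chi_s$, whereupon Proposition~\ref{pro67}~(ii) delivers $\mathcal{E}_{\varepsilon}(u_{\varepsilon}) \to \mathcal{E}_0(U,\chi)$, which is exactly the recovery property. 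Thus, aside from the symmetrization transfer, no genuinely new ingredient beyond the tools already proved in Sections~\ref{section31}--\ref{section32} is required.
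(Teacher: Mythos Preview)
Your proposal is correct and follows essentially the same route as the paper. The ``symmetrization transfer'' you isolate is precisely what the paper proves as Lemma~\ref{symmet}, and with it in hand the three parts reduce exactly as you describe to (A3)+(\ref{Korn_assumpt})+Poincar\'e for compactness, then Corollary~\ref{cor111}, Proposition~\ref{pro67}~(i), and Corollary~\ref{rem5}~(iii)+Proposition~\ref{pro67}~(ii) respectively; one small simplification is that the commutation identity $\widetilde{\mathcal{T}}_\varepsilon\big(v_\varepsilon(\cdot,\cdot-\varepsilon y)\big)(\omega,x)=\widetilde{\mathcal{T}}_\varepsilon v_\varepsilon(T_{-y}\omega,x-\varepsilon y)$ is in fact exact (no remainder $r_\varepsilon$ is needed), so the transfer is a direct computation plus the observation that spatial shifts by $\varepsilon y$ disappear under weak/strong $L^p$ limits.
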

\begin{remark}[Convergence of minimizers]\label{remark_930}
Under the assumptions of Theorem \ref{gammatheorem} and if the loadings $l\e \in L^{q}(\Omega \times \varepsilon \mathbb{Z}^d)^d$ satisfy $l\e\overset{2}{\to}l$, where $l\in L^q(\Omega\times O)^d$, the above theorem implies (by a standard argument from $\Gamma$-convergence) that minimizers $u\e$ in (\ref{minimum}) satisfy (up to a subsequence)
\begin{equation*}
u\e \overset{2}{\rightharpoonup} U \text{ in }L^p(\Omega\times \mathbb{R}^d)^d \quad \text{and} \quad \nabla^{\varepsilon}u\e \overset{2}{\rightharpoonup} \nabla U + \chi  \text{ in }L^p(\Omega \times \mathbb{R}^d)^{d\times d},
\end{equation*}
where $(U,\chi)\in (L^p_{\mathsf{inv}}(\Omega)\otimes W^{1,p}_0(O))^d \times(\mathbf{L}^p_{\mathsf{pot}}(\Omega)\otimes L^p(O))^d$ is a minimizer of the two-scale functional $\mathcal{I}_{0}: (L^p_{\mathsf{inv}}(\Omega)\otimes W^{1,p}_0(O))^d \times(\mathbf{L}^p_{\mathsf{pot}}(\Omega)\otimes L^p(O))^d \rightarrow \mathbb{R},$ 
\begin{equation*}
\mathcal{I}_{0}(U,\chi)=\mathcal{E}_{0}(U,\chi)-\expect{\int_O l \cdot U dx}.
\end{equation*}
\end{remark}

In the ergodic case, the limit is deterministic:
\begin{theorem}[Ergodic case]\label{gamma2}
Assume $(A0)-(A3)$ and that $\expect{\cdot}$ is ergodic.
\begin{enumerate}[(i)]
\item (Compactness and lower bound) Let $u_{\varepsilon}\in \brac{L^p(\Omega)\otimes L^p_0(O\cap \varepsilon\mathbb{Z}^{d})}^d$ satisfy
\begin{equation*}
\limsup_{\varepsilon\rightarrow 0}\mathcal{E}\e(u_{\varepsilon})<\infty.
\end{equation*}
There exists $U\in W^{1,p}_0(O)^d$ such that, up to a subsequence, 
\begin{align*}
& \expect{u_{\varepsilon}}\rightarrow U \text{ in } L^p(\mathbb{R}^d)^d,\quad
\expect{\nabla^{\varepsilon}u\e}\rightharpoonup{\nabla U} \text{ in }L^p(\mathbb{R}^d)^{d \times d},\\
 & \mathcal{E}_{\mathsf{hom}}(U)\leq \liminf_{\varepsilon\rightarrow 0} \mathcal{E}\e(u_{\varepsilon}). 
\end{align*}
\item (Upper bound) For any $U\in W^{1,p}_0(O)^d$, there exists $u\e \in L^p(\Omega)\otimes L^p_0(O\cap \varepsilon\mathbb{Z}^d)^d$ such that 
\begin{equation*}
\expect{u_{\varepsilon}}\rightarrow U \text{ in } L^p(\mathbb{R}^d)^d,\quad
\expect{\nabla^{\varepsilon} u\e}\rightharpoonup \nabla U \text{ in }L^p(\mathbb{R}^d)^{d \times d},\quad
\lim_{\varepsilon\rightarrow 0}\mathcal{E}\e(u_{\varepsilon})=\mathcal{E}_{\mathsf{hom}}(U).
\end{equation*}
\end{enumerate}
\end{theorem}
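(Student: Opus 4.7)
The strategy is to deduce Theorem~\ref{gamma2} from the two-scale result Theorem~\ref{gammatheorem} by using ergodicity to collapse the random two-scale limit to a deterministic single-scale object.

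\textbf{Part (i), compactness and lower bound.} Coercivity from (A3), the discrete Korn inequality (\ref{Korn_assumpt}), and a discrete Poincar\'e inequality (admissible since $u_\varepsilon\in (L^p_0(O\cap \varepsilon\mathbb{Z}^{d}))^d$) together yield the uniform bound $\limsup_{\varepsilon\to 0}\expect{\int_{\varepsilon\mathbb{Z}^{d}}|u_\varepsilon|^p+|\nabla^\varepsilon u_\varepsilon|^p\,dm_\varepsilon}<\infty$. Theorem~\ref{gammatheorem}(i) then supplies a (not relabeled) subsequence, $U\in(L^p_{\mathsf{inv}}(\Omega)\otimes W^{1,p}_0(O))^d$ and $\chi\in(\mathbf{L}^p_{\mathsf{pot}}(\Omega)\otimes L^p(O))^d$ with $u_\varepsilon\overset{2}{\rightharpoonup}U$ and $\nabla^\varepsilon u_\varepsilon\overset{2}{\rightharpoonup}\nabla U+\chi$. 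Ergodicity collapses $L^p_{\mathsf{inv}}(\Omega)\simeq\mathbb{R}$, so $U\in W^{1,p}_0(O)^d$ is deterministic; Corollary~\ref{ergodic}(i) gives $\expect{\nabla^\varepsilon u_\varepsilon}\rightharpoonup\nabla U$, and Corollary~\ref{ergodic}(iii) upgrades $\expect{u_\varepsilon}\to U$ to strong convergence in $L^p(\mathbb{R}^d)^d$ (using that $u_\varepsilon$ is supported in a neighborhood of $O$). For the lower bound, Theorem~\ref{gammatheorem}(ii) produces $\liminf\mathcal{E}_\varepsilon(u_\varepsilon)\geq \mathcal{E}_0(U,\chi)$. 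For a.e.\ $x\in O$ one has $\chi(\cdot,x)\in\mathbf{L}^p_{\mathsf{pot}}(\Omega)^d$ (by separability of $L^p(\Omega)$ and approximation by elementary tensors), so the corrector definition (\ref{problem}) applied pointwise yields $\expect{V(\omega,\nabla_s U(x)+\chi_s(\omega,x))}\geq V_{\mathsf{hom}}(\nabla U(x))$; integration in $x$ gives $\mathcal{E}_0(U,\chi)\geq \mathcal{E}_{\mathsf{hom}}(U)$.

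\textbf{Part (ii), recovery sequence.} The plan is to avoid a measurable selection for $F\mapsto\chi^F$ by first discretizing $\nabla U$. Approximate $U\in W^{1,p}_0(O)^d$ by $U_\delta$ with components piecewise affine on a simplicial mesh compactly supported in $O$, so that $\nabla U_\delta=\sum_{i=1}^{N_\delta}F_i^\delta\mathbf{1}_{A_i^\delta}$ takes finitely many values on Lipschitz pieces $A_i^\delta$, and $U_\delta\to U$ in $W^{1,p}(O)^d$. For each $i$, pick $\chi_i^\delta\in\mathbf{L}^p_{\mathsf{pot}}(\Omega)^d$ that is $\delta$-optimal in the corrector problem (\ref{problem}) for $F_i^\delta$, and define $\chi_\delta(\omega,x):=\sum_i\mathbf{1}_{A_i^\delta}(x)\chi_i^\delta(\omega)$, which belongs to $\mathbf{L}^p_{\mathsf{pot}}(\Omega)\otimes L^p(O)^d$. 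By construction $\mathcal{E}_0(U_\delta,\chi_\delta)\leq \mathcal{E}_{\mathsf{hom}}(U_\delta)+\delta|O|$. Theorem~\ref{gammatheorem}(iii) furnishes, for each $\delta$, a sequence $u_{\varepsilon,\delta}\in (L^p(\Omega)\otimes L^p_0(O\cap \varepsilon\mathbb{Z}^{d}))^d$ recovering $(U_\delta,\chi_\delta)$ in the strong two-scale sense with $\mathcal{E}_\varepsilon(u_{\varepsilon,\delta})\to \mathcal{E}_0(U_\delta,\chi_\delta)$. Local Lipschitz continuity of $V_{\mathsf{hom}}$ (from convexity and the $p$-growth (A3)) gives $\mathcal{E}_{\mathsf{hom}}(U_\delta)\to \mathcal{E}_{\mathsf{hom}}(U)$, so a diagonal extraction $u_\varepsilon:=u_{\varepsilon,\delta(\varepsilon)}$ combined with Corollary~\ref{ergodic}(iii)-(iv) yields $\expect{u_\varepsilon}\to U$ strongly and $\expect{\nabla^\varepsilon u_\varepsilon}\rightharpoonup \nabla U$ weakly, while $\limsup_\varepsilon\mathcal{E}_\varepsilon(u_\varepsilon)\leq \mathcal{E}_{\mathsf{hom}}(U)$. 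The matching $\liminf$ comes from part~(i) applied to this very sequence, so equality holds.

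\textbf{Main obstacle.} The delicate point is the construction of $\chi_\delta$ for the upper bound: selecting a corrector $\chi^F\in\mathbf{L}^p_{\mathsf{pot}}(\Omega)^d$ depending measurably on $F\in\mathbb{R}^{d\times d}$ would require a measurable selection argument, which I sidestep by discretizing $\nabla U$ to take only finitely many values and then diagonalizing in $\delta$ and $\varepsilon$. This in turn requires the continuity of $V_{\mathsf{hom}}$ and enough regularity of the pieces $A_i^\delta$ to invoke Theorem~\ref{gammatheorem}(iii). A secondary subtlety is the pointwise (in $x$) identification $\chi(\cdot,x)\in\mathbf{L}^p_{\mathsf{pot}}(\Omega)^d$ used in the lower bound, which follows from separability of $L^p(\Omega)$ and the characterization of the tensor product $\mathbf{L}^p_{\mathsf{pot}}(\Omega)\otimes L^p(O)$ as a closed subspace of $L^p(\Omega\times O)^d$.
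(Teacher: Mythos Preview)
Your argument is correct, and part (i) matches the paper's proof essentially line for line. Part (ii), however, takes a different route. You discretize $U$ by piecewise-affine $U_\delta$, pick $\delta$-approximate correctors on each simplex, invoke Theorem~\ref{gammatheorem}(iii) at every level $\delta$, and then diagonalize in $(\varepsilon,\delta)$. The paper instead constructs a \emph{single} optimal corrector $\chi\in(\mathbf{L}^p_{\mathsf{pot}}(\Omega)\otimes L^p(O))^d$ with $\mathcal{E}_0(U,\chi)=\mathcal{E}_{\mathsf{hom}}(U)$, after which one application of Theorem~\ref{gammatheorem}(iii) finishes the job with no diagonal argument. Concretely, the paper discretizes $\nabla U$ by piecewise-constant $F_n\to\nabla U$ in $L^p(O)^{d\times d}$, selects \emph{exact} correctors $\chi_i^n$ for each value $F_i^n$ (their existence is Lemma~\ref{corector}), and then uses the stochastic Korn inequality (Lemma~\ref{kornstoch}) to bound $\|\chi_n\|_{L^p}$ uniformly from the energy bound on $(\chi_n)_s$; weak compactness yields a limit $\chi$, and weak lower semicontinuity of $\mathcal{E}_0$ gives $\mathcal{E}_0(U,\chi)\leq\int_O V_{\mathsf{hom}}(\nabla U)$. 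What this buys: the paper avoids any diagonal extraction and produces an actual minimizing $\chi$, at the price of invoking Lemma~\ref{kornstoch} as the key new ingredient. Your approach sidesteps both the existence of exact correctors and the stochastic Korn bound on the corrector sequence, trading them for a diagonalization that must be done with some care (you need uniform energy bounds along the diagonal to identify the limit via part (i), which you have). Both arguments rely on the continuity of $V_{\mathsf{hom}}$ to handle the $\delta\to 0$ (respectively $n\to\infty$) passage.
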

\begin{remark}\label{remark2000}
In the ergodic case, $\mathcal{E}\e$ $\Gamma$-converges to the deterministic functional $\mathcal{E}_{\mathsf{hom}}$. In fact, it is known that for $P$-a.e. $\omega\in \Omega$ the functional 
\begin{equation*}
\mathcal{E}\e(\omega, \cdot):u\in L^p_0(O\cap \varepsilon\mathbb{Z}^{d} )^d\mapsto \int_{O^{+\varepsilon}\cap \varepsilon\mathbb{Z}^{d} }V(T_{\frac{x}{\varepsilon}}\omega, \nabla^{\varepsilon}_s u\e(x))dm_{\varepsilon}(x) \in \mathbb{R}
\end{equation*}
$\Gamma$-converges to $\mathcal{E}_{\mathsf{hom}}$. This quenched convergence result can be found, e.g., in \cite{alicandro2011integral}, where even  nonconvex integrands are treated. Based on stochastic unfolding, we obtain the weaker ``averaged'' result of Theorem~\ref{gamma2} as a corollary of Theorem \ref{gammatheorem} and Corollary \ref{ergodic}. While our argument is relatively easy, the analysis of the stronger quenched convergence result is based on the subadditive ergodic theorem \cite{akcoglu1981ergodic} and is more involved. We remark that minimizers $\omega \mapsto u\e(\omega)$ of the functionals $\mathcal{E}\e(\omega,\cdot)$ present a random field (under the above assumptions), which minimizes the averaged energy $\mathcal{E}\e$ (and vice versa).
\end{remark}

If we, additionally, assume the following assumption, we obtain strong convergence for minimizers.
\begin{itemize}
\item[(A4)]  For $P$-a.e. $\omega \in \Omega$, $V(\omega,\cdot)$ is uniformly convex with modulus $(\cdot)^p$, i.e.,  there exists $C>0$ such that for all $F, G\in \mathbb{R}^{k}$ and $t\in [0,1]$
\begin{align*}
V(\omega,tF+(1-t)G)\leq t V(\omega,F)+(1-t) V(\omega,G)-(1-t)tC|F-G|^p.
\end{align*} 
\end{itemize}
\begin{proposition}[Strong convergence]\label{prop41} Let the assumptions of Theorem \ref{gamma2} and $(A4)$ hold. Let $l\e\in L^{q}(\Omega \times \varepsilon \mathbb{Z}^d)^d$ satisfy $l\e\overset{2}{\to}l$ in $L^q(\Omega\times \mathbb{R}^d)^d$, where $l\in L^q(O)^d$. The problem (\ref{minimum}) admits a unique minimizer $u\e \in \brac{L^p(\Omega)\otimes L^p_0(O\cap \varepsilon\mathbb{Z}^{d})}^d$, which satisfies
\begin{equation}\label{eq:additional}
\quad u\e \overset{2}{\to} U  \text{ in }L^p(\Omega\times\mathbb{R}^d)^d,
\end{equation}
where $U\in W^{1,p}_0(O)^d$ is the unique minimizer of 
\begin{align*}
\mathcal{I}_{\mathsf{hom}}: W^{1,p}_0(O)^d \rightarrow \mathbb{R}, \quad
\mathcal{I}_{\mathsf{hom}}(U)=\mathcal{E}_{\mathsf{hom}}(U)-\int_O l \cdot U dx.
\end{align*}
\end{proposition}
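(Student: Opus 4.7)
The plan is to treat existence/uniqueness first. For each $\varepsilon>0$, (A3) combined with the discrete Korn inequality (\ref{Korn_assumpt}) and a discrete Poincaré inequality (both applicable since trial functions vanish outside $O\cap\varepsilon\mathbb{Z}^{d}$) makes the functional $u\mapsto \mathcal{E}\e(u)-\expect{\int_{O\cap\varepsilon\mathbb{Z}^{d}} l\e\cdot u\,dm\e}$ coercive on $(L^p(\Omega)\otimes L^p_0(O\cap\varepsilon\mathbb{Z}^{d}))^d$, while (A2) and (A4) provide strict convexity; hence a unique minimizer $u\e$ exists. The same direct method using (\ref{continuum_korn}) yields a unique minimizer $U\in W^{1,p}_0(O)^d$ of $\mathcal{I}_{\mathsf{hom}}$, since $V_{\mathsf{hom}}$ inherits strict convexity from (A4) through the corrector infimum in (\ref{problem}).

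\textbf{Weak two-scale convergence of minimizers.} Applying Theorem \ref{gamma2} together with Remark \ref{remark_930} (adapted to the ergodic setting) and using $l\e\overset{2}{\to}l$, one obtains $u\e\overset{2}{\rightharpoonup}U$ in $L^p(\Omega\times\mathbb{R}^d)^d$ and $\mathcal{I}\e(u\e)\to \mathcal{I}_{\mathsf{hom}}(U)$ along a subsequence; uniqueness of $U$ upgrades this to convergence of the full sequence.

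\textbf{Strong two-scale convergence via uniform convexity.} To upgrade to strong two-scale convergence, I would introduce the unique optimal corrector $\chi^*\in (\mathbf{L}^p_{\mathsf{pot}}(\Omega)\otimes L^p(O))^d$ realizing $\mathcal{E}_{0}(U,\chi^*)=\mathcal{E}_{\mathsf{hom}}(U)$ (existence follows by the direct method on the convex closed space, uniqueness from (A4)), and invoke Theorem \ref{gammatheorem}(iii) (see Corollary \ref{rem5}(iii)) to obtain a recovery sequence $\tilde u\e\in (L^p(\Omega)\otimes L^p_0(O\cap\varepsilon\mathbb{Z}^{d}))^d$ with $\tilde u\e\overset{2}{\to}U$ and $\nabla^{\varepsilon}\tilde u\e\overset{2}{\to}\nabla U+\chi^*$ strongly, and consequently $\mathcal{I}\e(\tilde u\e)\to \mathcal{I}_{\mathsf{hom}}(U)$. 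The uniform convexity (A4) applied pointwise to $(\nabla^{\varepsilon}_{s}u\e,\nabla^{\varepsilon}_{s}\tilde u\e)$, together with linearity of the load term, gives
\begin{equation*}
\mathcal{I}\e\!\left(\tfrac{u\e+\tilde u\e}{2}\right)\leq \tfrac{1}{2}\mathcal{I}\e(u\e)+\tfrac{1}{2}\mathcal{I}\e(\tilde u\e)-\tfrac{C}{4}\expect{\int_{O^{+\varepsilon}\cap\varepsilon\mathbb{Z}^{d}}|\nabla^{\varepsilon}_{s}u\e-\nabla^{\varepsilon}_{s}\tilde u\e|^{p}\,dm\e}.
\end{equation*}
Minimality of $u\e$ implies the left-hand side is $\geq \mathcal{I}\e(u\e)$, so the symmetrized-gradient difference satisfies $\|\nabla^{\varepsilon}_{s}(u\e-\tilde u\e)\|_{L^{p}}^{p}\lesssim \mathcal{I}\e(\tilde u\e)-\mathcal{I}\e(u\e)\to 0$. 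Since $u\e-\tilde u\e$ is supported in $O\cap\varepsilon\mathbb{Z}^{d}$ for each $\omega$, the discrete Korn inequality (\ref{Korn_assumpt}) and a discrete Poincaré inequality (applied $\omega$-wise and then integrated) yield $\|u\e-\tilde u\e\|_{L^{p}(\Omega\times\varepsilon\mathbb{Z}^{d})^{d}}\to 0$. Combining with $\tilde u\e\overset{2}{\to}U$ and the isometry property of $\mathcal{T}_{\varepsilon}$ delivers \eqref{eq:additional}.

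\textbf{Main obstacle.} The essential ingredient is the simultaneous \emph{strong} two-scale recovery of both $U$ and its optimal corrector $\chi^*$, provided by Theorem \ref{gammatheorem}(iii); once this is in hand, uniform convexity converts the energy gap into a quantitative control of the symmetrized gradient, and Korn plus Poincaré transport this control to the functions themselves. A subtle point worth checking is that (A4) only guarantees strict convexity (not uniform convexity) of $V_{\mathsf{hom}}$, which is nevertheless enough for the uniqueness of $U$ used above.
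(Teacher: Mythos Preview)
Your proposal is correct and follows essentially the same route as the paper: construct a strong two-scale recovery sequence $\tilde u_\varepsilon$ for the pair $(U,\chi^*)$ via Theorem~\ref{gammatheorem}(iii), exploit the uniform convexity (A4) together with the minimality of $u_\varepsilon$ to bound $\|\nabla_s^{\varepsilon}(u_\varepsilon-\tilde u_\varepsilon)\|_{L^p}^p$ by the energy gap $\mathcal I_\varepsilon(\tilde u_\varepsilon)-\mathcal I_\varepsilon(u_\varepsilon)\to 0$, and then use the discrete Korn and Poincar\'e inequalities plus the isometry of $\mathcal T_\varepsilon$ to conclude. The paper organizes the same computation by first splitting $\mathcal I_\varepsilon$ into $\mathcal E_\varepsilon$ and the load term before invoking minimality, whereas you keep them bundled and apply (A4) directly to $\mathcal I_\varepsilon$; this is a purely cosmetic difference.
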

\begin{remark}[Periodic homogenization]\label{remark3000}
As mentioned earlier, one knows that $\mathcal{E}\e(\omega, \cdot) \overset{\Gamma}{\to} \mathcal{E}_{\mathsf{hom}}$ in the quenched sense, whereas we obtain convergence for minimizers in an averaged sense (as above in (\ref{eq:additional})). Yet if we consider the setting for periodic homogenization, using the above convergence in the mean, we recover a standard (pointwise) periodic homogenization result. In particular, for $N\in \mathbb{N}$ we set $\Omega=N\Box\cap \mathbb{Z}^d_{/N\mathbb{Z}^d}$ the discrete $N$-torus with a corresponding (rescaled) counting measure. The dynamical system $(T_x)$ is defined as $T_x\omega= \omega+x \mod N$. The above example of the probability space with the dynamical system $(T_x)$ satisfies the assumptions given in Section \ref{section:297} and is ergodic. We remark that in this case $\Omega$ is a finite set and therefore (\ref{eq:additional}) implies that $u_{\varepsilon}(\omega)\to U$ in $L^p(\mathbb{R}^d)^d$ for all $\omega\in \Omega$.

\end{remark}
\subsection{Rate-independent evolutionary problem}\label{sectionERIS}
Let us first describe the system we have in mind. A system of particles connected with springs is represented using $(\varepsilon\mathbb{Z}^{d}, \varepsilon \mathsf{E})$. Namely, $\varepsilon\mathbb{Z}^{d}\cap O$ serves as the reference configuration of particles. The edges $\varepsilon \mathsf{E}$ represent springs with elasto-plastic response (cf. the introduction). Upon an external loading $l$, the system evolves according to an ERIS (see Appendix \ref{appendix2}). Let $O,O^{+\varepsilon}\subset \mathbb{R}^d$ be open (see below for specific assumptions). The following model is a random and discrete counterpart of the model considered in \cite{mielke2007two}, where the periodic continuum case is treated.

\begin{itemize}
\item The state space is $Y\e=\brac{L^2(\Omega)\otimes L^2_0(O\cap \varepsilon\mathbb{Z}^{d})}^d \times \brac{L^2(\Omega)\otimes L^2_0(O^{+\varepsilon}\cap \varepsilon\mathbb{Z}^{d})}^k$, and the displacement $u\e$ and the internal variable $z\e$ are merged into a joint variable $y\e=\brac{u\e, z\e}$. We equip $Y\e$ with the scalar product
\begin{eqnarray*}
&&\expect{y_1, y_2}_{Y\e}\\ &=&  \expect{\int_{\varepsilon\mathbb{Z}^{d}}u_1(\omega,x)\cdot u_2(\omega,x) dm_{\varepsilon}(x)} +\expect{\int_{\varepsilon\mathbb{Z}^{d}}\nabla^{\varepsilon}u_1(\omega,x) : \nabla^{\varepsilon}u_2(\omega,x) dm_{\varepsilon}(x)} \\ && +\expect{\int_{\varepsilon\mathbb{Z}^{d}} z_1(\omega,x)\cdot z_2(\omega,x) dm_{\varepsilon}(x)}.
\end{eqnarray*}
\item The total energy functional is $\mathcal{E}\e:[0,T]\times Y_{\varepsilon} \rightarrow \mathbb{R}$,
\begin{align*}
& \mathcal{E}\e(t,y_{\varepsilon})=\frac{1}{2} \expect{ \mathbb{A}_{\varepsilon}y_{\varepsilon},y_{\varepsilon} }_{Y\e^*,Y\e} -\expect{\int_{O\cap \varepsilon\mathbb{Z}^{d}}\pi\e l(t)(x)\cdot u_{\varepsilon}(\omega,x) dm_{\varepsilon}(x)},\\
& \expect{ \mathbb{A}_{\varepsilon}y_1,y_2}_{Y\e^*,Y\e}=\expect{\int_{O^{+\varepsilon}\cap \varepsilon\mathbb{Z}^{d} }A(T_{\frac{x}{\varepsilon}}\omega)\binom{\nabla_{s}^{\varepsilon} u_1(\omega,x)}{z_1(\omega,x)}\cdot \binom{\nabla_{s}^{\varepsilon} u_2(\omega,x)}{z_2(\omega,x)} dm_{\varepsilon}(x)}.
\end{align*}
\item The dissipation potential is $\Psi\e:Y_{\varepsilon}\rightarrow [0,\infty)$,
\begin{equation*}
\Psi\e(y_{\varepsilon})=\expect{\int_{O^{+\varepsilon}\cap \varepsilon\mathbb{Z}^{d}}\rho(T_{\frac{x}{\varepsilon}}\omega,z_{\varepsilon}(\omega,x))dm_{\varepsilon}(x)}.
\end{equation*}
\end{itemize}

We assume the following:
\begin{itemize}
\item[(B0)] $O\subset \mathbb{R}^d$ is a bounded domain with Lipschitz boundary. 

We set $O^{+\varepsilon}=O\cup \cb{x \in \mathbb{R}^d: (x,x+\varepsilon b_i)\cap O\neq \emptyset \text{ for some }b_i \in \mathsf{E}_0}$.
\item[(B1)] $A\in L^{\infty}(\Omega,\mathbb{R}^{2k\times 2k}_{sym})$ and it satisfies: there exists $C>0$ such that $A(\omega)F\cdot F\geq C|F|^2$ for $P$-a.e. $\omega\in \Omega$ and every $F\in \mathbb{R}^{2k}$.
\item[(B2)] $\rho:\Omega\times \mathbb{R}^k\rightarrow [0,\infty)$ is jointly measurable (i.e.,  w.r.t. $\mathcal{F}\otimes \mathcal{B}(\mathbb{R}^k)$) and for $P$-a.e. $\omega$, $\rho(\omega,\cdot)$ is convex and positively homogeneous of order 1, i.e.,  $\rho(\omega,\alpha F)=\alpha \rho(\omega, F)$ for all $\alpha\geq 0$ and $F\in \mathbb{R}^k$ (we also say \textit{positively 1-homogeneous}).
\end{itemize}
We consider the ERIS (see Appendix) associated with $\brac{\mathcal{E}\e, \Psi\e}$ and we denote by $
S_{\varepsilon}(t) := \cb{y \in Y\e:\mathcal{E}\e(t,y)\leq \mathcal{E}\e(t,\widetilde{y})+\Psi\e(\widetilde{y}-y) \text{ for all }\widetilde{y}\in Y\e  } 
$ the set of stable states corresponding to $(\mathcal{E}\e,\Psi\e)$ at time $t\in [0,T]$.
\begin{remark}\label{remark10}
If we assume $(B0)$-$(B2)$, $l\in C^1([0,T],L^2(O)^d)$ and $y_{\varepsilon}^0\in S_{\varepsilon}(0)$, using Theorem \ref{abstract} we obtain that there exists a unique energetic solution $y_{\varepsilon}\in C^{Lip}([0,T],Y_{\varepsilon})$ to the ERIS associated with $\brac{\mathcal{E}\e,\Psi\e}$ with $y\e(0)=y\e^0$, i.e.,  for all $t\in [0,T]$ we have $y\e(t)\in S\e(t)$ and
\begin{equation}\label{enbal1}
\mathcal{E}\e(t,y\e(t))+\int_0^t \Psi\e(\dot{y}\e(s))ds=\mathcal{E}\e(0,y\e(0))-\int_0^t \expect{\int_{O\cap \varepsilon\mathbb{Z}^{d}}\pi\e \dot{l}(s)\cdot u\e(s)dm_{\varepsilon}}ds, 
\end{equation}
and, moreover, $\|y_{\varepsilon}(t)-y_{\varepsilon}(s)\|_{Y_{\varepsilon}}\leq C|t-s|$ for all $t,s \in [0,T]$.

\end{remark}
The passage to the limit model (as $\varepsilon\rightarrow 0$) is conducted in the setting of evolutionary $\Gamma$-convergence \cite{mielke2008gamma} and involves a discrete-to-continuum transition. The homogenized model as well is described by an ERIS:
\begin{itemize}
\item The state space is given by $Y=H^1_0(O)^d\times L^2(\Omega\times O)^k\times (\mathbf{L}^2_{\mathsf{pot}}(\Omega)\otimes L^2(O))^d$ and we denote the state variable by $y=\brac{U,Z,\chi}$. 
\item The energy functional is 
\begin{align*}
& \mathcal{E}_{0}:[0,T]\times Y\rightarrow \mathbb{R}, \quad
\mathcal{E}_{0}(t,y)=\frac{1}{2}\expect{\mathbb{A}y,y}_{Y^*,Y}-\int_{O} l(t)\cdot U dx, \\
& \expect{\mathbb{A}y,y}_{Y^*,Y}=\int_O\expect{A(\omega)\binom{\nabla_s U(x)+\chi_s(\omega,x)}{Z(\omega,x)}\cdot \binom{\nabla_s U(x)+\chi_s(\omega,x)}{Z(\omega,x)}}dx.
\end{align*}
\item The limit dissipation functional is given by
\begin{equation*}
\Psi_0:Y\rightarrow [0,\infty], \quad
\Psi_0(y)=\int_O \expect{\rho(\omega, Z(\omega,x))}dx.
\end{equation*}   
\end{itemize}
We consider the set of stable states corresponding to $\brac{\mathcal{E}_{0},\Psi_0}$ at time $t\in [0,T]$
$
S(t):= \cb{y \in Y:\mathcal{E}_0(t,y)\leq \mathcal{E}_0(t,\widetilde{y})+\Psi_0(\widetilde{y}-y) \text{ for all }\widetilde{y}\in Y  }.
$
\begin{remark}\label{remark_1030}
If we assume (B0)-(B2), $l\in C^1([0,T],L^2(O)^d)$ and $y^0\in S(0)$, then the assumptions of Theorem \ref{abstract} are satisfied (see Lemma \ref{lemm12}) and therefore there exists a unique energetic solution $y \in C^{Lip}([0,T],Y)$ to the ERIS associated with $\brac{\mathcal{E}_{0},\Psi_0}$ with $y(0)=y^0$, i.e.,  for all $t\in [0,T]$ we have $y(t)\in S(t)$ and
\begin{equation}\label{enbal2}
\mathcal{E}_{0}(t,y(t))+\int_0^t \Psi_0(\dot{y}(s))ds = \mathcal{E}_{0}(0,y(0))-\int_0^t \int_O \dot{l}(s)\cdot U(s)dx ds,
\end{equation} 
and, moreover, $\|y(t)-y(s)\|_{Y}\leq C|t-s|$ for all $t,s \in [0,T]$.

\end{remark}
For notational convenience, we introduce the following abbreviation: For $y_{\varepsilon}\in Y_{\varepsilon}$ and $y\in Y$,
\begin{equation*}
y_{\varepsilon}\overset{c2}{\rightharpoonup} y \quad \Leftrightarrow \quad u_{\varepsilon}\overset{2}{\rightharpoonup} U  , \; \nabla^{\varepsilon}u_{\varepsilon}\overset{2}{\rightharpoonup} \nabla U+\chi \;\text{and } z_{\varepsilon}\overset{2}{\rightharpoonup} Z \quad \text{(in the corresp. } L^2 \text{-spaces}\text{)}.
\end{equation*}
Also, we use $\overset{c2}{\rightarrow}$ if the above quantities strongly two-scale converge. The ``c" in this shorthand refers to ``cross" convergence as in the periodic case. The proof of the following homogenization theorem closely follows the strategy developed in \cite{mielke2007two} (see also \cite{attouch1984variational,mielke2016evolutionary} for general strategies for evolutionary $\Gamma$-convergence for abstract gradient systems). In that paper, the periodic unfolding method is applied to a similar (continuum) problem with periodic coefficients.
\begin{theorem}\label{evgamma}
Assume $(B0)$-$(B2)$, $\expect{\cdot}$ is ergodic, $l \in C^1([0,T],L^2(O)^d)$ and $y_{\varepsilon}^0\in S_{\varepsilon}(0)$ with
\begin{equation*}
y_{\varepsilon}^0\overset{c2}{\rightarrow} y_0\in Y.
\end{equation*}
Let $y\e \in C^{Lip}([0,T],Y\e)$ be the unique energetic solution associated with $\brac{\mathcal{E}\e,\Psi\e}$ and $y\e(0)=y\e^0$. Then 
\begin{equation*}
y_0\in S(0) \text{ and } 
\text{for every }t\in [0,T]: \quad y_{\varepsilon}(t)\overset{c2}{\rightarrow}y(t),
\end{equation*}
where $y\in C^{Lip}([0,T],Y)$ is the unique energetic solution associated with $\brac{\mathcal{E}_{0},\Psi_0}$ and $y(0)=y^0$.
\end{theorem}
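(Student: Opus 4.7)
The plan is to follow the Mielke--Roub\'\i\v cek strategy for evolutionary $\Gamma$-convergence of rate-independent systems, adapting the periodic-unfolding argument of \cite{mielke2007two} to the stochastic discrete setting with the aid of the tools from Section \ref{section:3}. The essential inputs are: two-scale compactness for gradients (Proposition \ref{comp3} and Corollary \ref{cor111}), the strong two-scale recovery sequences with Dirichlet boundary data (Corollary \ref{rem5}(iii)), the lower/upper convergence for convex integrals along unfolded sequences (Proposition \ref{pro67}), the weak--strong pairing of Lemma \ref{basicfacts}(iv), and the existence-uniqueness statement for the limit energetic solution (Remark \ref{remark_1030}).

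First I will pass to a subsequential limit. The a priori Lipschitz estimate of Remark \ref{remark10} has a constant that depends only on $\|l\|_{C^1}$, the coercivity of $A$ in (B1), and $\mathcal E_\varepsilon(0, y_\varepsilon^0)$; the last quantity is bounded because $y_\varepsilon^0 \overset{c2}{\to} y^0$ strongly and Proposition \ref{pro67}(ii) gives $\mathcal E_\varepsilon(0, y_\varepsilon^0) \to \mathcal E_0(0, y^0)$. A diagonal argument over a countable dense subset of $[0,T]$ combined with Lemma \ref{basicfacts}(ii) then produces a subsequence and $y \in C^{Lip}([0,T], Y)$ with $y_\varepsilon(t) \overset{c2}{\rightharpoonup} y(t)$ for every $t \in [0,T]$. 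The structural constraints $U(t) \in H^1_0(O)^d$ and $\chi(t) \in (\mathbf{L}^2_{\mathsf{pot}}(\Omega) \otimes L^2(O))^d$ follow from Corollary \ref{cor111}, and ergodicity forces $U(t)$ to be deterministic.

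The heart of the argument is the stability $y(t) \in S(t)$. Given a competitor $\tilde y \in Y$ I construct a mutual recovery sequence $\tilde y_\varepsilon := y_\varepsilon(t) + w_\varepsilon$, with $w_\varepsilon \in Y_\varepsilon$ obtained by applying Corollary \ref{rem5}(iii) to the $(U,\chi)$-components of $\tilde y - y(t)$ and $\mathcal F_\varepsilon$ (Lemma \ref{Propfold}) to the $Z$-component, so that $w_\varepsilon \overset{c2}{\to} \tilde y - y(t)$ strongly and $\tilde y_\varepsilon$ respects the homogeneous Dirichlet condition. Exploiting the quadratic structure of $\mathcal E_\varepsilon$, the energy difference $\mathcal E_\varepsilon(t, \tilde y_\varepsilon) - \mathcal E_\varepsilon(t, y_\varepsilon(t))$ decomposes into a pure $w_\varepsilon$-quadratic term (limit via Proposition \ref{pro67}(ii) since $w_\varepsilon$ is strongly two-scale), a weak--strong cross term $\langle \mathbb A_\varepsilon y_\varepsilon(t), w_\varepsilon\rangle$ (limit via Lemma \ref{basicfacts}(iv) applied to the unfolded sequences), and the linear loading, and it converges to precisely $\mathcal E_0(t, \tilde y) - \mathcal E_0(t, y(t))$. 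Combined with $\Psi_\varepsilon(w_\varepsilon) \to \Psi_0(\tilde y - y(t))$ (again Proposition \ref{pro67}(ii), applied to the convex $1$-homogeneous $\rho$), the discrete stability $\mathcal E_\varepsilon(t, y_\varepsilon(t)) \leq \mathcal E_\varepsilon(t, \tilde y_\varepsilon) + \Psi_\varepsilon(w_\varepsilon)$ passes to $\mathcal E_0(t, y(t)) \leq \mathcal E_0(t, \tilde y) + \Psi_0(\tilde y - y(t))$. The same construction at $t=0$ yields $y^0 \in S(0)$.

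Once stability is known, the lower energy inequality of the limit ERIS follows from the standard Riemann-sum argument of \cite{mielke2005evolution}, while the upper one is obtained by passing to the $\liminf$ in the discrete energy balance (\ref{enbal1}): initial energies converge as above, the power $\int_0^t \langle \pi_\varepsilon \dot l(s), u_\varepsilon(s)\rangle\, ds$ converges by Lemma \ref{basicfacts}(iv) and dominated convergence in $s$ (the integrand is uniformly bounded via the Lipschitz estimate), and the total dissipation is lower-semicontinuous by a Helly-type lemma exploiting convexity and $1$-homogeneity of $\rho$. Thus $y$ is an energetic solution; uniqueness (Remark \ref{remark_1030}) promotes subsequential to full-sequence convergence. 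Matching the two energy inequalities forces $\mathcal E_\varepsilon(t, y_\varepsilon(t)) \to \mathcal E_0(t, y(t))$, and combining this convergence of the quadratic form with the established weak two-scale convergence, the uniform coercivity from (B1), and a two-scale version of the discrete Korn inequality yields the strong two-scale convergence $y_\varepsilon(t) \overset{c2}{\to} y(t)$ at every $t$. The delicate point I anticipate is the admissibility of the mutual recovery sequence: in the periodic case a closed-form two-scale construction is available, whereas in the stochastic case Corollary \ref{rem5}(iii) relies on a nonlinear cut-off, and I will have to check that the boundary layer it introduces is negligible both for the quadratic energy difference and for the $1$-homogeneous dissipation, which is essentially where ergodicity and the representation of $\mathbf L^2_{\mathsf{pot}}(\Omega)$ enter.
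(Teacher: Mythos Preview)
Your proposal follows essentially the same route as the paper: Arzel\`a--Ascoli/diagonal compactness on the unfolded trajectories, a mutual recovery sequence built from Corollary~\ref{rem5}(iii) for the $(U,\chi)$-part and $\mathcal F_\varepsilon$ for the $Z$-part (this is precisely Lemma~\ref{stab111}), passage to the limit in the energy balance via a partition/Riemann-sum argument for the dissipation, and finally the upgrade to strong two-scale convergence from the matching of energies. The paper then uses uniqueness of the limit solution for a subsequence-to-full-sequence argument, exactly as you describe.

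Two small corrections are in order. First, you invoke Proposition~\ref{pro67}(ii) for the convergence $\Psi_\varepsilon(w_\varepsilon)\to\Psi_0(\tilde y-y(t))$, but $\rho$ is positively $1$-homogeneous and does \emph{not} satisfy the $p$-growth hypotheses of that proposition (in particular no coercivity $\tfrac1C|F|^2-C\le\rho$). The paper handles this differently: the upper bound $\Psi_\varepsilon(w_\varepsilon)\le\Psi_0(\tilde y-y(t))$ comes from Jensen's inequality applied to the averaging in $\mathcal F_\varepsilon=\widetilde{\mathcal T}_\varepsilon^{-1}\circ\pi_\varepsilon$, and the lower bound from Fatou's lemma together with continuity of $\rho(\omega,\cdot)$. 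Second, your closing worry about the cut-off in Corollary~\ref{rem5}(iii) being ``where ergodicity enters'' is a misconception. The mutual-recovery construction and the negligibility of the boundary layer are insensitive to ergodicity; the paper even remarks that Theorem~\ref{evgamma} holds in the non-ergodic case with $Y$ replaced by $(L^2_{\mathsf{inv}}(\Omega)\otimes H^1_0(O))^d\times L^2(\Omega\times O)^k\times(\mathbf L^2_{\mathsf{pot}}(\Omega)\otimes L^2(O))^d$. Ergodicity is used only to identify $L^2_{\mathsf{inv}}(\Omega)$ with $\mathbb R$, so that the limit displacement $U(t)$ is deterministic.
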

\begin{remark}
We remark that the above result holds true in the case that $\expect{\cdot}$ is not ergodic (with minor changes in the proof) with a modified state space for the continuum model, specifically $Y=(L^2_{\mathsf{inv}}(\Omega)\otimes H^1_0(O))^d \times  L^2(\Omega\times O)^k\times (\mathbf{L}^2_{\mathsf{pot}}(\Omega)\otimes L^2(O))^d$.
\end{remark}
\subsection{Gradient plasticity}\label{S:gradient} The limit rate-independent system from the previous section cannot be equivalently recast as a rate-independent system with deterministic properties as in the case of convex minimization (Theorem \ref{gamma2}). The reason for this is that the limiting internal variable $Z$ is in general not deterministic. The microscopic problem might be regularized by adding a gradient term of the internal variable $z\e$ and in that way homogenization yields a deterministic limit problem. This strategy was demonstrated in \cite{hanke2011homogenization}, where periodic homogenization of gradient plasticity in the continuum setting is discussed. In the following, we show that the same applies in our stochastic, discrete-to-continuum setting. 
\smallskip

Let $\gamma \in (0,1)$. The new microscopic system involves the same dissipation potential $\Psi\e$ as before, as well as the same state space $Y\e$, yet now equipped with the scalar product
\begin{eqnarray*}
&& \expect{y_1, y_2}_{Y\e^{\gamma}} \\ & = & \expect{\int_{\varepsilon\mathbb{Z}^{d}}u_1(\omega,x)\cdot u_2(\omega,x) dm_{\varepsilon}(x)}+\expect{\int_{\varepsilon\mathbb{Z}^{d}}\nabla^{\varepsilon}u_1(\omega,x) : \nabla^{\varepsilon}u_2(\omega,x) dm_{\varepsilon}(x)}\\ & &+ \expect{\int_{\varepsilon\mathbb{Z}^{d}} z_1(\omega,x)\cdot z_2(\omega,x) dm_{\varepsilon}(x)}+\expect{ \int_{\varepsilon \mathbb{Z}^d} \varepsilon^{\gamma} \nabla^{\varepsilon} z_1(\omega,x) :\varepsilon^{\gamma} \nabla^{\varepsilon} z_2(\omega,x) dm_{\varepsilon}(x) }.
\end{eqnarray*} 
We consider a modified energy functional $\mathcal{E}^{\gamma}_{\varepsilon}:[0,T]\times Y\e\to \mathbb{R} $ 
\begin{equation*}
\mathcal{E}^{\gamma}_{\varepsilon}(t,y\e)=\mathcal{E}_{\varepsilon}(t,y\e)+\expect{\int_{\varepsilon\mathbb{Z}^{d}} G(T_{\frac{x}{\varepsilon}} \omega) \varepsilon^{\gamma}\nabla^{\varepsilon} z\e (\omega,x) : \varepsilon^{\gamma} \nabla^{\varepsilon} z\e(\omega,x) dm_{\varepsilon}(x) },
\end{equation*}
where $G: \Omega \to \mathbb{R}^{k\times d}$. We assume the following:
\begin{itemize}
\item[(B3)] $G \in L^{\infty}(\Omega,Lin(\mathbb{R}^{k \times d},\mathbb{R}^{k \times d}))$ and it satisfies the following: There exists $C>0$ such that $G(\omega)F_1: F_1\geq C|F_1|^2$ and $G(\omega)F_1:F_2=F_1:G(\omega)F_2$ for $P$-a.e. $\omega\in \Omega$ and all $F_1,F_2 \in \mathbb{R}^{k\times d}$.
\end{itemize}
The set of stable states at time $t\in [0,T]$ corresponding to $\brac{\mathcal{E}^{\gamma}_{\varepsilon},\Psi\e}$ is denoted by 
$
S\e^{\gamma}(t):=\cb{y \in Y\e:\mathcal{E}\e^{\gamma}(t,y)\leq \mathcal{E}\e^{\gamma}(t,\widetilde{y})+\Psi\e(\widetilde{y}-y) \text{ for all }\widetilde{y}\in Y\e}.
$
\begin{remark}\label{remark:1292}
If we assume $(B0)$-$(B3)$, $l\in C^1([0,T],L^2(O)^d)$ and $y_{\varepsilon}^0\in S_{\varepsilon}^{\gamma}(0)$, then, using Theorem \ref{abstract}, there exists a unique energetic solution $y_{\varepsilon}\in C^{Lip}([0,T],Y_{\varepsilon})$ to the ERIS associated with $\brac{\mathcal{E}\e^{\gamma},\Psi\e}$ with $y\e(0)=y\e^0$, i.e.,  for all $t\in [0,T]$ we have $y\e(t)\in S\e^{\gamma}(t)$ and
\begin{equation}\label{enbal3}
\mathcal{E}\e^{\gamma}(t,y\e(t))+\int_0^t \Psi\e(\dot{y}\e(s))ds = \mathcal{E}\e^{\gamma}(0,y\e(0))-\int_0^t \expect{\int_{O\cap \varepsilon\mathbb{Z}^{d}}\pi\e \dot{l}(s)\cdot u\e(s)dm_{\varepsilon}}ds,
\end{equation}  
and, moreover, $\|y\e(t)-y\e(s)\|_{Y\e^{\gamma}} \leq C|t-s|$ for all $t,s \in [0,T]$.
\end{remark}

In the limit $\varepsilon \to 0$, we obtain a deterministic rate-independent system described as follows:
\begin{itemize}
\item The state space is $Q=H^1(O)^d \times L^2(O)^k$ and the state variable is denoted by $q=(U,Z)$.
\item The energy functional is $\mathcal{E}_{\mathsf{hom}}:[0,T]\times Q\to \mathbb{R}$
\begin{equation*}
\mathcal{E}_{\mathsf{hom}}(t,q)=\int_{O} V_{\mathsf{hom}}(\nabla_s U,Z) dx- \int_{O}l(t)\cdot U dx,
\end{equation*}
where $V_{\mathsf{hom}}$ is given by the corrector problem: For $F_1, F_2\in \mathbb{R}^k$,
\begin{align*}
V_{\mathsf{hom}}(F_1,F_2)= & \inf_{\chi\in \mathbf{L}^2_{\mathsf{pot}}(\Omega)^d } \expect{A(\omega)\binom{F_1+\chi_s(\omega)}{F_2}\cdot \binom{ F_1+\chi_s(\omega)}{F_2}}.
\end{align*}
In fact, it can be shown that $V_{\mathsf{hom}}$ is quadratic: There exists $A_{\mathsf{hom}}\in \mathbb{R}^{2k \times 2k}_{sym}$ positive-definite such that $V_{\mathsf{hom}}(F_1, F_2)=A_{\mathsf{hom}}\binom{F_1}{F_2}\cdot \binom{F_1}{F_2}$ for all $F_1,F_2 \in \mathbb{R}^k$. 
\item The dissipation potential is given by $\Psi_{\mathsf{hom}}: Q \to [0,\infty)$
\begin{equation*}
\Psi_{\mathsf{hom}}(q)=\int_{O}\expect{\rho(\omega,Z(x))}dx.
\end{equation*}
\end{itemize}
The set of stable states at time $t\in [0,T]$ corresponding to the functionals $\brac{\mathcal{E}_{\mathsf{hom}},\Psi_{\mathsf{hom}}}$ is denoted by 
$
{S}_{\mathsf{hom}}(t)=\cb{q \in Q:\mathcal{E}_{\mathsf{hom}}(t,q)\leq \mathcal{E}_{\mathsf{hom}}(t,\widetilde{q})+\Psi_{\mathsf{hom}}(\widetilde{q}-q) \text{ for all }\widetilde{q}\in Q}.
$
\begin{remark}
If we assume (B0)-(B3), $l\in C^1([0,T],L^2(O)^d)$ and $q^0\in {S}_{\mathsf{hom}}(0)$, then, using Theorem \ref{abstract}, there exists a unique energetic solution $q \in C^{Lip}([0,T],Q)$ to the ERIS associated with $\brac{\mathcal{E}_{\mathsf{hom}},\Psi_{\mathsf{hom}}}$ with $q(0)=q^0$, i.e.,  for all $t\in [0,T]$ we have $q(t)\in S_{\mathsf{hom}}(t)$ and
\begin{equation}\label{enbal4}
\mathcal{E}_{\mathsf{hom}}(t,q(t))+\int_0^t \Psi_{\mathsf{hom}}(\dot{q}(s))ds = \mathcal{E}_{\mathsf{hom}}(0,q(0))-\int_0^t \int_O \dot{l}(s)\cdot U(s)dx ds.
\end{equation} 
\end{remark}
\begin{theorem}\label{evgamma4}
Assume $(B0)$-$(B3)$, $\expect{\cdot}$ is ergodic. Let $l \in C^1([0,T],L^2(O)^d)$, $y_{\varepsilon}^0\in S_{\varepsilon}^{\gamma}(0)$, $q_0\in Q$, $\chi \in (\mathbf{L}^2_{\mathsf{pot}}(\Omega)\otimes L^2(O))^d$ satisfy
\begin{equation*}
y_{\varepsilon}^0\overset{c2}{\rightarrow} (q_0, \chi), \quad \varepsilon^{\gamma}\nabla^{\varepsilon}z\e^0 \overset{2}{\to}0 \text{ in }L^2(\Omega\times \mathbb{R}^d)^{k\times d},\quad  \mathcal{E}_{\mathsf{hom}}(0,q_0)=\mathcal{E}_0(0,(q_0,\chi)).
\end{equation*}
Let $y\e \in C^{Lip}([0,T],Y\e)$ be the unique energetic solution associated with $\brac{\mathcal{E}\e^{\gamma},\Psi\e}$ and $y\e(0)=y\e^0$. Then $q_0\in S_{\mathsf{hom}}(0)$ and for every $t\in [0,T]$,
\begin{equation*}
 u\e(t) \overset{2}{\to}U(t) \text{ in }L^2(\Omega\times \mathbb{R}^d)^d, \quad z\e(t) \overset{2}{\to} Z(t) \text{ in }L^2(\Omega\times \mathbb{R}^{d})^k,
\end{equation*}
where $q=(U,Z)\in C^{Lip}([0,T],Q)$ is the unique energetic solution associated with $\brac{\mathcal{E}_{\mathsf{hom}},\Psi_{\mathsf{hom}}}$ and $q(0)=q_0$.
\end{theorem}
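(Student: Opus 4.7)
The strategy follows the standard evolutionary $\Gamma$-convergence recipe for rate-independent systems (as in \cite{mielke2007two}) but adapted to stochastic unfolding. The key new point compared with Theorem \ref{evgamma} is that the gradient penalty $\varepsilon^\gamma \nabla^\varepsilon z\e$, combined with Proposition \ref{comp3}(ii) and ergodicity, forces the two-scale limit $Z$ to be shift invariant and therefore deterministic, so the limit system collapses from the two-scale space $Y$ onto the smaller deterministic space $Q$. First I would extract a priori bounds from the energy balance (\ref{enbal3}): using $(B1)$, $(B3)$, $l\in C^1([0,T],L^2(O)^d)$ and a Gronwall-type argument, one obtains $\sup_{t\in[0,T]}\|y\e(t)\|_{Y\e^\gamma}\leq C$ uniform in $\varepsilon$, together with the uniform Lipschitz estimate from Remark \ref{remark:1292}. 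In particular $\|\varepsilon^\gamma \nabla^{\varepsilon} z\e(t)\|_{L^2(\Omega\times\varepsilon\mathbb{Z}^{d})}$ stays bounded.

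Next, for fixed $t$ I would apply the compactness results: Proposition \ref{comp3}(i) (via Corollary \ref{cor111}) to $u\e(t)$ yields, along a subsequence, $u\e(t)\overset{2}{\rightharpoonup} U(t)\in H^1_0(O)^d$ and $\nabla^{\varepsilon}u\e(t)\overset{2}{\rightharpoonup}\nabla U(t)+\chi(t)$ with $\chi(t)\in(\mathbf{L}^2_{\mathsf{pot}}(\Omega)\otimes L^2(O))^d$ (deterministic $U$ by ergodicity and Corollary \ref{ergodic}). Applying Proposition \ref{comp3}(ii) to $z\e(t)$ for the chosen $\gamma\in(0,1)$ gives $z\e(t)\overset{2}{\rightharpoonup} Z(t)$ with $Z(t)\in L^2_{\mathsf{inv}}(\Omega)\otimes L^2(O)=L^2(O)^k$ (again by ergodicity), plus a limit $\zeta(t)\in \mathbf{L}^2_{\mathsf{pot}}(\Omega)\otimes L^2(O)^{k}$ for $\varepsilon^\gamma\nabla^{\varepsilon}z\e(t)$. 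By the Lipschitz bound and Arzel\`a-Ascoli one can extract a subsequence so that these limits hold for every $t\in[0,T]$ and define Lipschitz curves $U,Z,\chi,\zeta$.

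I would then verify the two conditions for $(U,Z)$ to be the energetic solution of $(\mathcal{E}_{\mathsf{hom}},\Psi_{\mathsf{hom}})$. For stability at each time $t$, given a competitor $\widetilde{q}=(\widetilde{U},\widetilde{Z})\in Q$, I would use Corollary \ref{rem5}(iii) to build a recovery sequence $\widetilde{u}\e\overset{c2}{\to}(\widetilde{U},\widetilde{\chi})$ (where $\widetilde{\chi}$ minimises the corrector problem defining $V_{\mathsf{hom}}(\nabla_s\widetilde{U},\widetilde{Z})$, guaranteed to exist thanks to the quadratic structure) and Corollary \ref{rem5}(iv) to build $\widetilde{z}\e$ with $\widetilde{z}\e\overset{2}{\to}\widetilde{Z}$ and $\varepsilon^\gamma\nabla^{\varepsilon}\widetilde{z}\e\overset{2}{\to} 0$; testing stability of $y\e(t)\in S\e^\gamma(t)$ against $\widetilde{y}\e=(\widetilde{u}\e,\widetilde{z}\e)$, passing to the limit using Proposition \ref{pro67}(i) on the left-hand side and Proposition \ref{pro67}(ii) on the right-hand side, and exploiting positive $1$-homogeneity of $\rho$ (so that $\Psi\e(\widetilde{y}\e-y\e(t))\to\int_O\langle\rho(\omega,\widetilde{Z}-Z(t))\rangle dx=\Psi_{\mathsf{hom}}(\widetilde{q}-q(t))$ by strong two-scale convergence of the difference and continuity of $\rho$), delivers $q(t)\in S_{\mathsf{hom}}(t)$. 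Since $\chi$ must realise the infimum in the corrector problem for $(\nabla_s U(t),Z(t))$ at every $t$, we obtain $\mathcal{E}_0(t,(U(t),Z(t),\chi(t)))=\mathcal{E}_{\mathsf{hom}}(t,q(t))$.

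Finally, for the energy balance I would pass to the limit in (\ref{enbal3}): the left-hand side is handled by Proposition \ref{pro67}(i) applied separately to the elastic, plastic, gradient and dissipation integrands; the right-hand side converges by strong two-scale convergence of the initial data (the well-preparedness assumption $\mathcal{E}_{\mathsf{hom}}(0,q_0)=\mathcal{E}_0(0,(q_0,\chi))$ is exactly what equates the initial energies after unfolding) and by strong two-scale convergence of $u\e(t)$ for the loading term. The reverse inequality comes automatically from stability plus the standard energetic inequality argument, yielding (\ref{enbal4}) for $(U,Z)$. Uniqueness of the limit energetic solution (Remark \ref{remark:1292} analogue in $Q$) promotes subsequential to full convergence. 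Strong two-scale convergence of $u\e(t),z\e(t)$ is then a direct consequence of the convergence of energies: the quadratic form $\langle\mathbb{A}\cdot,\cdot\rangle$ is coercive by $(B1)$, so weak two-scale convergence plus norm convergence (from the energy equality together with Proposition \ref{pro67}(ii) applied to the recovery sequence associated with $(U(t),Z(t),\chi(t))$) upgrades to strong two-scale convergence via Lemma \ref{basicfacts}(iii). The main obstacle is the stability step: one must argue that the optimal corrector $\widetilde{\chi}$ in the definition of $V_{\mathsf{hom}}(\nabla_s\widetilde{U},\widetilde{Z})$ genuinely serves as a microscopic competitor — this is precisely what the whole-space construction of Corollary \ref{rem5}(iii) coupled with the cut-off to $O$ and the freedom in choosing $\widetilde{\chi}$ pointwise in $x$ provides, analogously to the periodic argument in \cite{mielke2007two,hanke2011homogenization}.
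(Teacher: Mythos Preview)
Your overall roadmap is right, but the stability step contains a genuine gap. You propose to build the competitor $\widetilde y_\varepsilon=(\widetilde u_\varepsilon,\widetilde z_\varepsilon)$ \emph{directly} as a strong two-scale recovery sequence for $(\widetilde U,\widetilde\chi,\widetilde Z,0)$ via Corollary~\ref{rem5}(iii),(iv), and then claim that $\Psi_\varepsilon(\widetilde y_\varepsilon-y_\varepsilon(t))\to\Psi_{\mathsf{hom}}(\widetilde q-q(t))$ ``by strong two-scale convergence of the difference''. But at this stage $z_\varepsilon(t)$ is only \emph{weakly} two-scale convergent, so the difference $\widetilde z_\varepsilon-z_\varepsilon(t)$ is only weakly convergent; convexity of $\rho$ then gives you only $\liminf\Psi_\varepsilon(\widetilde y_\varepsilon-y_\varepsilon(t))\geq\Psi_{\mathsf{hom}}(\widetilde q-q(t))$, which is the wrong direction for deducing stability of the limit. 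The same problem blocks the passage to the limit in the quadratic energy difference $\mathcal E_\varepsilon^\gamma(t,\widetilde y_\varepsilon)-\mathcal E_\varepsilon^\gamma(t,y_\varepsilon(t))$: with your construction this is a difference of two separately converging quantities, and the cross term does not behave.

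The paper's fix, following Lemma~\ref{stab111}, is to construct a \emph{joint} recovery sequence: apply Corollary~\ref{rem5}(iii) to the difference $(\widetilde U-U(t),\widetilde\chi-\chi_1(t))$ to get $v_\varepsilon$, apply Corollary~\ref{rem5}(iv) to $(\widetilde Z-Z(t),-\chi_2(t))$ to get $g_\varepsilon$, and set $\widetilde y_\varepsilon:=y_\varepsilon(t)+(v_\varepsilon,g_\varepsilon)$. Then $\widetilde y_\varepsilon-y_\varepsilon(t)$ is \emph{strongly} two-scale convergent by construction, so $\Psi_\varepsilon$ of the difference converges, and the quadratic energy difference factors as a weak--strong product (cf.\ \eqref{quadr}). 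The choice of $-\chi_2(t)$ for the gradient component is essential: it makes $\varepsilon^\gamma\nabla^\varepsilon\widetilde z_\varepsilon\overset{2}{\rightharpoonup}0$, so the $G$-term disappears in the limit. Testing afterwards with $\widetilde q=q(t)$ yields both $\chi_2(t)=0$ and the identification of $\chi_1(t)$ as the optimal corrector; in your write-up the assertion ``$\chi$ must realise the infimum'' is stated but not justified, and indeed it cannot be justified without this joint-recovery argument.
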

\begin{remark}
A close look at the proof of Theorem~\ref{evgamma4} shows that in addition we have for all $t\in [0,T]$
\begin{equation*}
\nabla^{\varepsilon}u\e(t) \overset{2}{\rightharpoonup} \nabla U(t)+ \chi(t) \text{ in }L^2(\Omega\times \mathbb{R}^d)^{d\times d}, \quad \varepsilon^{\gamma}\nabla^{\varepsilon}z\e(t) \overset{2}{\rightharpoonup} 0 \text{ in }L^2(\Omega\times \mathbb{R}^d)^{k\times d},
\end{equation*}
where $\chi(t)\in (\mathbf{L}^2_{\mathsf{pot}}(\Omega)\otimes L^2(O))^d$ is uniquely determined by the identity
\begin{eqnarray*}
&& V_{\mathsf{hom}}(\nabla_s U(t)(x),Z(t)(x))\\ &=& \expect{A(\omega)\binom{\nabla_s U(t)(x)+\chi_s(t)(\omega,x)}{Z(t)(x)}\cdot \binom{\nabla_s U(t)(x)+\chi_s(t)(\omega,x)}{Z(t)(x)}}
\end{eqnarray*}
for a.e. $x\in O$.
\end{remark}
\subsection{Proofs}\label{section_1111}
Before presenting the proofs, we consider a couple of auxiliary lemmas.
\begin{lemma}[Two-scale convergence of symmetrized gradients]\label{symmet} 
We consider $u_{\varepsilon}\in L^{p}(\Omega \times \varepsilon \mathbb{Z}^d)^d$ and $F\in L^{p}(\Omega\times \mathbb{R}^d)^{d\times d}$ such that $\nabla^{\varepsilon} u\e \overset{2}{\rightharpoonup} F$ in $ L^{p}(\Omega\times \mathbb{R}^d)^{d \times d} $. Then
\begin{equation*}
\nabla_{s}^{\varepsilon}u_{\varepsilon}\overset{2}{\rightharpoonup}  F_s \quad \text{in }L^{p}(\Omega\times \mathbb{R}^d)^k.
\end{equation*}
If we have strong two-scale convergence for $\nabla^{\varepsilon}u_{\varepsilon}$, strong two-scale convergence for $\nabla^{\varepsilon}_s u\e$ follows.
\end{lemma}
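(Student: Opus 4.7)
The plan is to exploit the decomposition
\begin{equation*}
\partial_i^\varepsilon u_\varepsilon(\omega,x)=\sum_{y\in\mathbb Z^d}\nabla^\varepsilon u_\varepsilon(\omega,x-\varepsilon y)\,B_i(y)
\end{equation*}
stated in Section~\ref{lattice_graphs} (a \emph{finite} sum since $B_i$ has compact support) and to commute it with the unfolding operator. A direct calculation using the identity $T_{-(x-\varepsilon y)/\varepsilon}=T_y\circ T_{-x/\varepsilon}$ for $y\in\mathbb Z^d$ and $x\in\varepsilon\mathbb Z^d$, together with $\lfloor x-\varepsilon y\rfloor_\varepsilon=\lfloor x\rfloor_\varepsilon-\varepsilon y$ when one passes to piecewise-constant extensions, yields the key commutation identity
\begin{equation*}
\mathcal{T}_\varepsilon\partial_i^\varepsilon u_\varepsilon(\omega,x)=\sum_{y\in\mathbb Z^d}\bigl(\mathcal{T}_\varepsilon\nabla^\varepsilon u_\varepsilon\bigr)(T_{-y}\omega,\,x-\varepsilon y)\,B_i(y)\quad\text{in }L^p(\Omega\times\mathbb R^d)^d.
\end{equation*}

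The claim therefore reduces to showing, for each fixed $y\in\mathbb Z^d$, that the shifted sequence $S_\varepsilon^y(\omega,x):=(\mathcal{T}_\varepsilon\nabla^\varepsilon u_\varepsilon)(T_{-y}\omega,x-\varepsilon y)$ converges weakly (resp.\ strongly) in $L^p(\Omega\times\mathbb R^d)^{d\times d}$ to $S^y(\omega,x):=F(T_{-y}\omega,x)$. For the weak statement I would test against $\psi\in L^q(\Omega\times\mathbb R^d)^{d\times d}$: by measure-preservation of $T_{-y}$ and translation-invariance of the Lebesgue measure,
\begin{equation*}
\langle S_\varepsilon^y,\psi\rangle=\bigl\langle\mathcal{T}_\varepsilon\nabla^\varepsilon u_\varepsilon,\,\psi(T_y\cdot,\cdot+\varepsilon y)\bigr\rangle.
\end{equation*}
Strong continuity of translation in the $x$-variable in $L^q$ gives $\psi(T_y\cdot,\cdot+\varepsilon y)\to\psi(T_y\cdot,\cdot)$ strongly in $L^q$, so the hypothesis $\mathcal{T}_\varepsilon\nabla^\varepsilon u_\varepsilon\rightharpoonup F$ delivers $\langle S_\varepsilon^y,\psi\rangle\to\langle F,\psi(T_y\cdot,\cdot)\rangle=\langle S^y,\psi\rangle$. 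Summing the finite identity above over $y$ and taking the inner product with $b_i/|b_i|$ produces precisely $(F_s)_i$, by the very definition of $F_s$, which proves the weak convergence $\mathcal{T}_\varepsilon\nabla_s^\varepsilon u_\varepsilon\rightharpoonup F_s$.

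For the strong case the same reduction applies. Since $f\mapsto f(T_{-y}\cdot,\cdot-\varepsilon y)$ is an $L^p$-isometry, the triangle inequality gives
\begin{equation*}
\|S_\varepsilon^y-S^y\|_{L^p}\le\|\mathcal{T}_\varepsilon\nabla^\varepsilon u_\varepsilon-F\|_{L^p}+\|F(\cdot,\cdot-\varepsilon y)-F(\cdot,\cdot)\|_{L^p},
\end{equation*}
and both terms vanish as $\varepsilon\to 0$ (the first by hypothesis, the second by strong continuity of translation in $L^p$). I expect the only mildly delicate point to be the derivation of the commutation identity for $\mathcal{T}_\varepsilon\partial_i^\varepsilon u_\varepsilon$, where one must carefully track how the shift in $\Omega$ induced by $\widetilde{\mathcal{T}}_\varepsilon$ interacts with the spatial shift $x\mapsto x-\varepsilon y$ on $\varepsilon\mathbb Z^d$; everything afterwards is bookkeeping based on the isometry property of $\mathcal{T}_\varepsilon$, the measure-preserving property of $T$, and strong continuity of translations in $L^p$.
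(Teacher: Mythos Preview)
Your proposal is correct and follows essentially the same approach as the paper's proof: both derive the commutation identity $\mathcal{T}_\varepsilon(\nabla_s^\varepsilon u_\varepsilon)_i=\frac{b_i}{|b_i|}\cdot\sum_y(\mathcal{T}_\varepsilon\nabla^\varepsilon u_\varepsilon)(T_{-y}\omega,x-\varepsilon y)B_i(y)$ and then pass to the limit term by term in the finite sum. The paper simply asserts that each shifted term $(\omega,x)\mapsto(\mathcal{T}_\varepsilon\nabla^\varepsilon u_\varepsilon)(T_{-y}\omega,x-\varepsilon y)B_i(y)$ converges weakly (resp.\ strongly) to $(\omega,x)\mapsto F(T_{-y}\omega,x)B_i(y)$, whereas you spell out the justification via measure-preservation of $T_{-y}$ and strong continuity of spatial translation in $L^p$/$L^q$; this extra detail is correct and welcome but does not constitute a different route.
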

\begin{proof}
For any $i\in \cb{1,...,k}$ we compute
\begin{align*}
\mathcal{T}_{\varepsilon} (\nabla_{s}^{\varepsilon}u\e)_i(\omega,x)&=\frac{b_i}{|b_i|}\cdot \mathcal{T}_{\varepsilon} \sum_{y\in \mathbb{Z}^d} \nabla^{\varepsilon}u\e(\omega,x-\varepsilon y)B_i(y)\\&=\frac{b_i}{|b_i|}\cdot \sum_{y\in \mathbb{Z}^d} \mathcal{T}_{\varepsilon}\nabla^{\varepsilon}u\e(T_{-y}\omega,x-\varepsilon y)B_i(y).
\end{align*}
For any fixed $y\in \mathbb{Z}^d$, the function $(\omega,x)\mapsto\mathcal{T}_{\varepsilon}\nabla^{\varepsilon}u\e(T_{-y}\omega,x-\varepsilon y)B_i(y)$ weakly converges to $(\omega,x)\mapsto F(T_{-y}\omega,x)B_i(y)$. If we assume strong two-scale convergence for the gradient, the previous quantities converge in the strong sense. Using this and the fact that $B_i(y)=0$ for all but finitely many $y\in \mathbb{Z}^d$, the claim follows.
\end{proof}
\begin{lemma}[Stochastic Korn's inequality]\label{kornstoch}
Recall that it is assumed that $(\mathbb{Z}^d, \mathsf{E})$ satisfies (\ref{Korn_assumpt}) (see Section \ref{lattice_graphs}). There exists $C>0$ such that
\begin{equation*}
\expect{|\chi|^p}\leq C \expect{|\chi_s|^p} \quad \text{for all }\chi \in \mathbf{L}^p_{\mathsf{pot}}(\Omega)^d.
\end{equation*}
\end{lemma}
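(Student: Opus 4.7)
The plan is to reduce the inequality to the discrete Korn inequality \eqref{Korn_assumpt} on $\varepsilon\mathbb{Z}^d$ by constructing, for any $\chi$ in a dense subset of $\mathbf{L}^p_{\mathsf{pot}}(\Omega)^d$, a family of compactly supported vector fields $g_\varepsilon:\Omega\times\varepsilon\mathbb{Z}^d\to\mathbb R^d$ whose discrete (symmetrized) gradients strongly two-scale converge to $\chi(\omega)\eta(x)$ (resp.\ $\chi_s(\omega)\eta(x)$), apply \eqref{Korn_assumpt} pointwise in $\omega$, take expectation, and pass to the limit using that $\mathcal T_\varepsilon$ is an isometry.

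\smallskip

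First I would reduce to $\chi=(Dv_1,\dots,Dv_d)$ with each $v_j\in L^p(\Omega)$, which is permitted by density of $\mathsf{ran}\,D$ in $\mathbf{L}^p_{\mathsf{pot}}(\Omega)$ and continuity of the symmetrization map $\chi\mapsto\chi_s$ (a bounded linear operator on $L^p(\Omega)^{d\times d}$, being a finite combination of shift operators composed with a matrix contraction). Given such a $\chi$ and any nonzero $\eta\in C_c^\infty(\mathbb R^d)$, I set componentwise
\[
g_\varepsilon^j(\omega,x):=\varepsilon\,\mathcal F_\varepsilon(v_j\otimes\eta)(\omega,x)=\varepsilon\,v_j(T_{x/\varepsilon}\omega)\,\pi_\varepsilon\eta(x),\qquad j=1,\ldots,d,
\]
so that $g_\varepsilon(\omega,\cdot)$ has compact support for every $\omega$. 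The computation at the end of the proof of Lemma~\ref{lemmaaa} (carried out componentwise in $j$) yields
\[
\nabla^\varepsilon_i g_\varepsilon^j(\omega,x)=D_iv_j(T_{x/\varepsilon}\omega)\,\pi_\varepsilon\eta(x+\varepsilon e_i)+\varepsilon\,v_j(T_{x/\varepsilon}\omega)\,\nabla^\varepsilon_i\pi_\varepsilon\eta(x).
\]
From this identity I read off that $\mathcal T_\varepsilon\nabla^\varepsilon g_\varepsilon\to\chi(\omega)\eta(x)$ strongly in $L^p(\Omega\times\mathbb R^d)^{d\times d}$: the first summand strongly two-scale converges to $\chi\eta$, while the second vanishes because $\eta$ is smooth and compactly supported. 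Lemma~\ref{symmet} (its strong version) then gives $\nabla^\varepsilon_s g_\varepsilon\overset{2}{\to}(\chi\eta)_s=\chi_s(\omega)\eta(x)$ strongly in $L^p(\Omega\times\mathbb R^d)^k$.

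\smallskip

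Finally, applying \eqref{Korn_assumpt} to each realization $g_\varepsilon(\omega,\cdot)$ (the inequality transfers to $\varepsilon\mathbb{Z}^d$ with the same constant by homogeneity of the difference operator under the dilation $x\mapsto\varepsilon x$) and taking expectation produces
\[
\|\nabla^\varepsilon g_\varepsilon\|^p_{L^p(\Omega\times\varepsilon\mathbb{Z}^d)^{d\times d}}\leq C\,\|\nabla^\varepsilon_s g_\varepsilon\|^p_{L^p(\Omega\times\varepsilon\mathbb{Z}^d)^k}.
\]
Since $\mathcal T_\varepsilon$ is an isometry, both sides equal the $L^p$-norms of the unfolded quantities, and the strong two-scale convergences above allow both sides to pass to the limit, giving $\expect{|\chi|^p}\,\|\eta\|^p_{L^p(\mathbb R^d)}\leq C\,\expect{|\chi_s|^p}\,\|\eta\|^p_{L^p(\mathbb R^d)}$. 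Cancelling the positive factor $\|\eta\|^p_{L^p}$ yields the claim, and the extension to general $\chi\in\mathbf{L}^p_{\mathsf{pot}}(\Omega)^d$ follows by density and continuity of $\chi\mapsto\chi_s$. I do not expect any substantive obstacle; the only points requiring mild care are verifying that the convergences of $\nabla^\varepsilon g_\varepsilon$ and $\nabla^\varepsilon_s g_\varepsilon$ are genuinely strong (so that $L^p$-norms converge), which is transparent from the explicit identity above, and noting that the Korn constant in \eqref{Korn_assumpt} is $\varepsilon$-independent after rescaling.
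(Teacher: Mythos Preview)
Your argument is correct and takes a route that differs from the paper's proof in an interesting way. Both proofs start with the same density reduction to $\chi=D\varphi$, but the paper then works directly on $\mathbb{Z}^d$ (i.e.\ $\varepsilon=1$): it forms the stationary extension $\widetilde{\varphi}(\omega,x)=\varphi(T_x\omega)$, multiplies by the indicator of a large ball $B_R$, applies \eqref{Korn_assumpt}, and uses measure preservation to see that the bulk averages equal $\expect{|D\varphi|^p}$ and $\expect{|D_s\varphi|^p}$ \emph{exactly} for every $R$; the work is then to show that the boundary-layer contribution on $B_{R+2K}\setminus B_R$ vanishes as $R\to\infty$. Your proof is, morally, the rescaled version of this: you fix a smooth cutoff $\eta$ and send $\varepsilon\to 0$ instead of fixing $\varepsilon=1$ and sending $R\to\infty$. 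What makes your argument genuinely different is that you route everything through the unfolding machinery already built in the paper (the folding $\mathcal F_\varepsilon$, the isometry of $\mathcal T_\varepsilon$, and Lemma~\ref{symmet} for the symmetrized gradient), so the boundary-layer estimate is replaced by a strong two-scale convergence that falls out of the explicit product formula for $\nabla^\varepsilon g_\varepsilon$. This has the pleasant effect of making the proof feel more native to the paper's framework; the paper's proof, on the other hand, is more self-contained and does not invoke any of the two-scale apparatus. The points you flag as needing care (strong rather than weak convergence, $\varepsilon$-independence of the Korn constant under rescaling, and the identity $(\chi\otimes\eta)_s=\chi_s\otimes\eta$) are all easily verified and there is no circularity since Lemma~\ref{symmet} is independent of the present lemma.
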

\begin{proof}
We show the inequality in the case $\chi=D\varphi$ for $\varphi\in L^p(\Omega)^d$. For general $\chi\in \mathbf{L}^p_{\mathsf{pot}}(\Omega)^d$, it is obtained by an approximation argument. We denote by $\widetilde{\varphi}: \Omega\times \mathbb{Z}^d\to \mathbb{R}^{d}$ the stationary extension of $\varphi$, i.e.,  $\widetilde{\varphi}(\omega,x)=\varphi(T_x \omega)$. In this proof we consider the operators $\nabla^{\varepsilon}$ and $\nabla^{\varepsilon}_s$ with $\varepsilon=1$ and for notational convenience we drop the index $\varepsilon$ and simply write $\nabla$ and $\nabla_s$.

Let $R>0$ and $K>0$ such that $K>\sup\cb{|b_i|:b_i \in \mathsf{E}_0}$. Let $\eta_{R}$ be a cut-off function given by $\eta_{R}: \mathbb{Z}^d \rightarrow \mathbb{R}$ with $\eta_R=1$ in $B_{R+K}\cap\mathbb{Z}^d$  and with $\eta_{R}=0$ otherwise ($B_R\subset \mathbb{R}^d$ is a ball of radius $R$ with center in 0). Using the properties of $\eta_{R}$ and the discrete Korn's inequality, we obtain 
\begin{eqnarray*}
&& \expect{\dashint_{B_R \cap \mathbb{Z}^d}|{D\varphi}(T_x \omega)|^p dm(x)} \\ &\leq & \expect{\frac{1}{|B_R|}\int_{\mathbb{Z}^d}|\nabla(\widetilde{\varphi}(\omega,x)\eta_R(x))|^p dm(x)} \\ & \leq & \expect{\frac{C}{|B_R|}\int_{\mathbb{Z}^d}|\nabla_{s}(\widetilde{\varphi}(\omega,x)\eta_{R}(x))|^p dm(x)}\\ & =& \expect{C\dashint_{B_R\cap \mathbb{Z}^d}|\nabla_{s}(\widetilde{\varphi}(\omega,x))|^p dm(x)}\\ && +\expect{\frac{C}{|B_R|}\int_{(B_{R+2K}\setminus B_R)\cap \mathbb{Z}^d}|\nabla_{s}(\widetilde{\varphi}(\omega,x)\eta_{R}(x))|^p dm(x)}.
\end{eqnarray*}
By invariance of $P$ the left-hand side of the above inequality equals $\expect{|D\varphi|^p}$ for any $R$.
Moreover, the first term on the right-hand side equals $C\expect{| D_s \varphi|^p}$. Therefore, it is sufficient to show that the second term vanishes in the limit $R\rightarrow \infty$. To obtain that, we estimate (for $P$-a.e. $\omega\in \Omega$)
\begin{eqnarray}\label{equation_1092}
&& \frac{1}{|B_R|}\int_{(B_{R+2K}\setminus B_R)\cap \mathbb{Z}^d}|\nabla_{s}(\widetilde{\varphi}(\omega,x)\eta_R(x))|^p dm(x)  \\ & \leq & \frac{C}{|B_R|}\int_{(B_{R+2K}\setminus B_{R-K})\cap \mathbb{Z}^d} |\widetilde{\varphi}(\omega,x)|^p|\eta_R(x)|^p dm(x) \nonumber \\ & \leq & \frac{C}{|B_R|}\int_{(B_{R+2K}\setminus B_{R-K})\cap \mathbb{Z}^d} |\widetilde{\varphi}(\omega,x)|^p dm(x) \nonumber \\ & = & \frac{C}{|B_R|}\int_{B_{R+2K}\cap \mathbb{Z}^d} |\widetilde{\varphi}(\omega,x)|^p dm(x) -\frac{C}{|B_R|}\int_{B_{R-K}\cap \mathbb{Z}^d} |\widetilde{\varphi}(\omega,x)|^p dm(x)\nonumber \\ & = &
\frac{C|B_{R+2K}|}{|B_R|}\dashint_{B_{R+2K}\cap \mathbb{Z}^d} |\widetilde{\varphi}(\omega,x)|^p dm(x) -\frac{C|B_{R-K}|}{|B_R|}\dashint_{B_{R-K}\cap \mathbb{Z}^d} |\widetilde{\varphi}(\omega,x)|^p dm(x).\nonumber
\end{eqnarray}
In the first inequality above, we used the fact that $\nabla_s: L^p(\varepsilon\mathbb{Z}^{d})^d\to L^p(\varepsilon\mathbb{Z}^{d})^k$ is a bounded operator. An integration of (\ref{equation_1092}) over $\Omega$ yields 
\begin{eqnarray*}
&& \expect{\frac{C}{|B_R|}\int_{(B_{R+2K}\setminus B_R)\cap \mathbb{Z}^d}|\nabla_{s}(\widetilde{\varphi}(\omega,x)\eta_{R}(x))|^p dm(x)}\\ &\leq & \expect{|\varphi|^p} \brac{\frac{C|B_{R+2K}|}{|B_R|}- \frac{C|B_{R-K}|}{|B_R|}} {\to} 0 \quad \text{as }R\to \infty.
\end{eqnarray*}
This concludes the proof.
\end{proof}

The above result and the direct method of calculus of variations imply the following.
\begin{lemma}\label{corector} For any $F\in \mathbb{R}^{d\times d}$, there exists $\chi\in \mathbf{L}^p_{\mathsf{pot}}(\Omega)^d$ which attains the infimum in (\ref{problem}) in Section \ref{section_870}.
\end{lemma}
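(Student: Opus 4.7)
The plan is to apply the direct method of calculus of variations to the functional
\begin{equation*}
  J(\chi):=\expect{V(\omega,F_s+\chi_s(\omega))}, \qquad \chi\in \mathbf{L}^p_{\mathsf{pot}}(\Omega)^d.
\end{equation*}
By the lower growth bound in (A3), $J$ is bounded below by $-C|\Omega|$, so the infimum is finite. I pick a minimizing sequence $\chi_n\in \mathbf{L}^p_{\mathsf{pot}}(\Omega)^d$.

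First, I would derive coercivity. The bound $\frac{1}{C}|F_s+(\chi_n)_s|^p-C\leq V(\omega,F_s+(\chi_n)_s)$ together with $\sup_n J(\chi_n)<\infty$ yields that $(\chi_n)_s$ is bounded in $L^p(\Omega)^k$, whence (by the reverse triangle inequality) so is $(\chi_n)_s$ after subtracting the constant $F_s$. Stochastic Korn's inequality (Lemma \ref{kornstoch}) then upgrades this to a uniform bound $\sup_n\|\chi_n\|_{L^p(\Omega)^d}<\infty$. Here I use that $\chi \mapsto \chi_s$ is linear bounded from $L^p(\Omega)^d$ to $L^p(\Omega)^k$, and that Korn controls the full potential field by its symmetrized part on $\mathbf{L}^p_{\mathsf{pot}}(\Omega)^d$.

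Next, by reflexivity of $L^p(\Omega)^d$ (since $1<p<\infty$) and the fact that $\mathbf{L}^p_{\mathsf{pot}}(\Omega)$ is by definition a closed subspace of $L^p(\Omega)^d$, hence weakly closed, I extract a (non-relabeled) subsequence with $\chi_n\rightharpoonup \chi$ in $\mathbf{L}^p_{\mathsf{pot}}(\Omega)^d$. Since the symmetrization map $\chi\mapsto \chi_s$ is linear and bounded from $L^p(\Omega)^d$ to $L^p(\Omega)^k$, it is weak-weak continuous, so $(\chi_n)_s\rightharpoonup \chi_s$ in $L^p(\Omega)^k$. Consequently $F_s+(\chi_n)_s\rightharpoonup F_s+\chi_s$ in $L^p(\Omega)^k$.

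Finally, by (A2) the integrand $V(\omega,\cdot)$ is convex, and by (A3) it satisfies a $p$-growth upper bound, so the functional $G\mapsto \expect{V(\omega,G(\omega))}$ is convex and continuous on $L^p(\Omega)^k$ (strong topology), hence weakly lower semicontinuous. Therefore
\begin{equation*}
  J(\chi)=\expect{V(\omega,F_s+\chi_s)}\leq \liminf_{n\to \infty}\expect{V(\omega,F_s+(\chi_n)_s)}=\inf_{\mathbf{L}^p_{\mathsf{pot}}(\Omega)^d}J,
\end{equation*}
so $\chi$ attains the infimum. The only genuinely non-routine step is the coercivity, which relies critically on the stochastic Korn inequality of Lemma \ref{kornstoch}; the remainder is a standard application of the direct method.
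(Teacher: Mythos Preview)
Your proof is correct and follows exactly the approach the paper indicates: the paper merely states that the lemma follows from the stochastic Korn inequality (Lemma~\ref{kornstoch}) and the direct method of the calculus of variations, and you have spelled this out faithfully. One minor notational slip: since $\chi_n\in \mathbf{L}^p_{\mathsf{pot}}(\Omega)^d\subset L^p(\Omega)^{d\times d}$, the bound obtained from Korn is on $\|\chi_n\|_{L^p(\Omega)^{d\times d}}$ rather than $L^p(\Omega)^d$, but this does not affect the argument.
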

\smallskip

\begin{proof}[Proof of Theorem \ref{gammatheorem}]
(i) The growth conditions of $V$, the Korn property of the lattice and a discrete Poincar\'e inequality imply 
\begin{equation*}
\limsup_{\varepsilon\rightarrow 0}\left\langle \int_{\varepsilon\mathbb{Z}^{d}} |u_{\varepsilon}(\omega,x)|^p+|\nabla^{\varepsilon}u_{\varepsilon}(\omega,x)|^p dm_{\varepsilon}(x) \right\rangle<\infty.
\end{equation*}
Therefore, the claim follows by Proposition \ref{comp3} (i) and Corollary \ref{cor111}. 
\smallskip

(ii) The claim directly follows from Lemma \ref{symmet} and Proposition \ref{pro67} (i). 
\smallskip

(iii) The claim follows from Corollary \ref{rem5} (iii), Lemma \ref{symmet} and Proposition \ref{pro67} (ii). 
\end{proof}
\smallskip

\begin{proof}[Proof of Theorem \ref{gamma2}]
(i) By Theorem \ref{gammatheorem} there exist $U\in W^{1,p}_0(O)^d$, $\chi\in (\mathbf{L}^p_{\mathsf{pot}}(\Omega)\otimes L^p(O))^d$, and a two-scale convergent subsequence such that  
\begin{equation*}
\liminf_{\varepsilon\rightarrow 0} \mathcal{E}\e(u_{\varepsilon})\geq \mathcal{E}_{0}(U,\chi)\geq \mathcal{E}_{\mathsf{hom}}(U). 
\end{equation*}
The corresponding convergence for $u\e$ follows from Corollary \ref{ergodic} and Lemma \ref{lemma_equivalent_conv}.
\smallskip

(ii) It is sufficient to show that for $U\in W^{1,p}_0(O)$, there exists $\chi\in (\mathbf{L}^p_{\mathsf{pot}}(\Omega)\otimes L^p(O))^d$ such that
\begin{equation*}
\mathcal{E}_0(U,\chi)=\mathcal{E}_{\mathsf{hom}}(U).
\end{equation*}
Indeed, if this holds, Theorem \ref{gammatheorem} (iii) implies that there is a $u\e \in L^p(\Omega)\otimes L^p_0(O\cap \varepsilon\mathbb{Z}^{d})^d$ such that
\begin{align*}
& u_{\varepsilon} \overset{2}{\rightarrow} U \text{ in } L^p(\Omega\times \mathbb{R}^d)^d, \quad 
\nabla^{\varepsilon}u_{\varepsilon}\overset{2}{\rightarrow} \nabla U+\chi \text{ in } L^{p}(\Omega\times \mathbb{R}^d)^{d\times d},\\
& \lim_{\varepsilon\rightarrow 0}\mathcal{E}\e(u_{\varepsilon})=\mathcal{E}_{0}(U,\chi)=\mathcal{E}_{\mathsf{hom}}(U)
\end{align*}
and the corresponding convergence for $\expect{u\e}$ and $\expect{\nabla^{\varepsilon}u\e}$ follows from Corollary \ref{ergodic} and Lemma \ref{lemma_equivalent_conv}.

To show the above claim, we note that $\nabla U \in L^p(O)^{d\times d}$ and consider a sequence of piecewise constant functions $F_n(x)=\sum_{i=1}^{k(n)} \mathbf{1}_{O_i^n}(x) F_i^{n}$ where $F_i^n \in \mathbb{R}^{d\times d}$ and such that
\begin{equation*}
F_n \to \nabla U \text{ strongly in } L^p(O)^{d\times d}.
\end{equation*} 
For any $F_i^n$, there is $\chi_i^n \in \mathbf{L}^p_{\mathsf{pot}}(\Omega)^d$ such that $V_{\mathsf{hom}}(F_i^n)=\expect{V(\omega,(F_i^n)_s+(\chi_i^n)_s(\omega))}$. We define $\chi_n(x,\omega)=\sum_{i=1}^{k(n)}\mathbf{1}_{O_i^n}(x)\chi_i^n(\omega)$. Noting that $\chi_n\in (\mathbf{L}^p_{\mathsf{pot}}(\Omega)\otimes L^p(O))^d$ and with the help of the growth assumptions (A3) and the stochastic Korn's inequality (Lemma \ref{kornstoch}), we obtain
\begin{equation*}
\limsup_{n\to \infty} \expect{\int_O |\chi_n(x,\omega)|^p dx}< \infty.
\end{equation*}
As a result of this, there exist a subsequence $n'$ and $\chi \in (\mathbf{L}^p_{\mathsf{pot}}(\Omega)\otimes L^p(O))^d$ such that 
\begin{equation*}
\chi_{n'} \rightharpoonup \chi \text{ weakly in } (\mathbf{L}^p_{\mathsf{pot}}(\Omega)\otimes L^p(O))^d.
\end{equation*}
Furthermore, we have
\begin{align*}
\expect{ \int_O V(\omega, \nabla_s U(x)+ \chi_s(\omega,x)) dx} & \leq \liminf_{n'\to \infty } \int_O \expect{ V(\omega, (F_{n'})_s(x)+(\chi_{n'})_s(\omega,x))}dx \\
& = \liminf_{n'\to \infty} \int_O V_{\mathsf{hom}}(F_{n'}(x))dx= \int_O V_{\mathsf{hom}}(\nabla U(x))dx.
\end{align*}
Above, in the first inequality we use weak lower-semicontinuity of the functional $G\in L^p(\Omega \times O)^{d\times d}\mapsto \expect{\int_O V(\omega, G(\omega,x))dx}$ and the facts that $F_{n'}\to \nabla U$, $\chi_{n'}\rightharpoonup \chi$ and  
that $(\cdot)_s$ is a linear and bounded operator. In order to justify the last equality, we remark that $V_{\mathsf{hom}}$ is convex (and therefore continuous) and satisfies the growth assumptions $-C-C|F|^p \leq V_{\mathsf{hom}}(F)\leq C|F|^p+C$ which implies that the functional $G\in L^p(O)^{d\times d}\mapsto \int_O V_{\mathsf{hom}}(G(x))dx$ is strongly continuous. 

On the other hand, it is easy to see that 
\begin{equation*}
\int_O V_{\mathsf{hom}}(\nabla U(x))dx\leq \expect{\int_O V(\omega,  \nabla_s U(x)+ \chi_s(\omega,x))dx}. 
\end{equation*}
This concludes the proof.
\end{proof}
\smallskip

\begin{proof}[Proof of Proposition \ref{prop41}]
The uniqueness of the minimizer in (\ref{minimum}) follows by uniform convexity assumption on the integrand $V$. Theorem \ref{gamma2} implies that (up to a subsequence) $u\e\overset{2}{\rightharpoonup} U$ in $L^p(\Omega\times \mathbb{R}^d)^d$, where $U$ is a minimizer of $\mathcal{I}_{\mathsf{hom}}$. As in the proof of Theorem \ref{gamma2}, we find $\chi \in (\mathbf{L}^p_{\mathsf{pot}}(\Omega)\otimes L^p(O))^d$ with $\mathcal{I}_{\mathsf{hom}}(U)=\mathcal{E}_{0}(U,\chi)-\int_O l\cdot U dx$. By Theorem \ref{gammatheorem}, there exists a strong two-scale recovery sequence $v\e \in \brac{L^p(\Omega)\otimes L^p_0(O\cap \varepsilon\mathbb{Z}^{d})}^d$ for $(U,\chi)$. We have
\begin{eqnarray*}
&& \expect{\int_{\mathbb{R}^d}|\mathcal{T}_{\varepsilon} u\e(\omega,x) -U(x)|^p dx}\\ & \leq & C \brac{\expect{\int_{\mathbb{R}^d}|\mathcal{T}_{\varepsilon} u\e(\omega,x)-\mathcal{T}_{\varepsilon} v\e(\omega,x)|^p dx}  +\expect{\int_{\mathbb{R}^d}|\mathcal{T}_{\varepsilon} v\e(\omega,x)-U(x)|^p dx}}.
\end{eqnarray*} 
The second term on the right-hand side vanishes in the limit $\varepsilon\to 0$ by the properties of $v\e$. The first term also vanishes as $\varepsilon\to 0$ and this follows by a standard argument using strong convexity: By the isometry property of $\mathcal T\e$,  a discrete Poincar{\'e}-Korn inequality following from (\ref{Korn_assumpt}), the strong convexity $(A4)$, and since $\nabla_s^{\varepsilon}u_{\varepsilon}$ and $\nabla_{s}^{\varepsilon}v_{\varepsilon}$ are supported in $O^{+\varepsilon}\cap \varepsilon \mathbb{Z}^d$,
\begin{equation}\label{equation:1111}
\begin{aligned}
  &\expect{\int_{\mathbb{R}^d}|\mathcal{T}_{\varepsilon} u\e(\omega,x)-\mathcal{T}_{\varepsilon} v\e(\omega,x)|^p dx}   \leq  C \expect{\int_{\varepsilon\mathbb{Z}^{d}}|\nabla^{\varepsilon}_s u\e(\omega,x)- \nabla^{\varepsilon}_s v\e(\omega,x)|^p dm_{\varepsilon}(x)}\\
  & \leq  C \brac{ \frac{1}{2}\big(\mathcal{E}_{\varepsilon}(u_{\varepsilon}) + \mathcal{E}_{\varepsilon}(v\e)\big) - \mathcal{E}_{\varepsilon}\brac{\frac{1}{2}u\e+\frac{1}{2}v_{\varepsilon}} }.
 \end{aligned}
\end{equation}
Since $u_{\varepsilon}$ solves (\ref{minimum}), we have
\begin{align*}
-\mathcal{E}_{\varepsilon}\brac{\frac{1}{2}u_{\varepsilon}+\frac{1}{2}v\e} \leq -\mathcal{E}_{\varepsilon}(u_{\varepsilon})+ \expect{\int_{O\cap \varepsilon\mathbb{Z}^d}l_{\varepsilon}\cdot u_{\varepsilon}}- \expect{\int_{O\cap \varepsilon \mathbb{Z}^d}l_{\varepsilon}\cdot \brac{\frac{1}{2}u\e +\frac{1}{2}v\e}},
\end{align*}
and thus with (\ref{equation:1111}),
\begin{eqnarray*}
\expect{\int_{\mathbb{R}^d}|\mathcal{T}_{\varepsilon} u\e(\omega,x)-\mathcal{T}_{\varepsilon} v\e(\omega,x)|^p dx} \leq  C \brac{ \frac{1}{2}\big(\mathcal{E}_{\varepsilon}(v\e) -\mathcal{E}_{\varepsilon}(u\e)\big) +\frac{1}{2}\expect{\int_{O\cap \varepsilon \mathbb{Z}^d}l\e \cdot \brac{u\e - v\e}} }.
\end{eqnarray*}
The last term on the right-hand side vanishes as $\varepsilon\to 0$ (using strong two-scale convergence of $l\e$). Similarly as in the proof of Theorem \ref{gamma2}, it follows that $\limsup_{\varepsilon\to 0}(\mathcal{E}\e(v\e)-\mathcal{E}\e(u\e))\leq 0$ and therefore  $\limsup_{\varepsilon\to 0}\expect{\int_{\mathbb{R}^d}|\mathcal{T}_{\varepsilon} u\e(\omega,x)-\mathcal{T}_{\varepsilon} v\e(\omega,x)|^p dx} \leq 0$.
This yields the claim for a subsequence. Convergence for the whole sequence follows by a contradiction argument and using the uniqueness of the minimizer $U$.
\end{proof}
\smallskip

\begin{lemma}\label{lemm12} Let $\mathbb{A}$ and $Y$ be defined as in Section \ref{sectionERIS} (with the same assumptions as in Remark \ref{remark_1030}). There exists $C>0$ such that $\expect{\mathbb{A}y,y}_{Y^*,Y}\geq C \|y\|^2_{Y}$ for all $y\in Y$.
\end{lemma}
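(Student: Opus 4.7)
The plan is to use the pointwise coercivity of $A$ from assumption (B1), together with a decoupling of the deterministic part $\nabla_s U$ from the random fluctuation $\chi_s$ exploiting that $\chi$ lies in $\mathbf{L}^2_{\mathsf{pot}}(\Omega)\otimes L^2(O)$, and finally to apply the continuum Korn inequality \eqref{continuum_korn} on the deterministic piece and the stochastic Korn inequality (Lemma \ref{kornstoch}) on the random piece.

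Concretely, by (B1) applied pointwise $(\omega,x)$-wise and integration,
\begin{equation*}
\expect{\mathbb A y,y}_{Y^*,Y}\;\geq\; C\int_O\expect{|\nabla_s U(x)+\chi_s(\omega,x)|^2+|Z(\omega,x)|^2}dx.
\end{equation*}
The key algebraic observation is that the cross term vanishes: since $\mathbf{L}^2_{\mathsf{pot}}(\Omega)=(\ker D^*)^\perp$ and every constant vector lies in $\ker D^*$, one obtains $\expect{\chi(\omega,x)}=0$ for a.e.\ $x\in O$; by the stationarity used in the definition of $\chi_s$ this lifts to $\expect{\chi_s(\omega,x)}=0$ for a.e.\ $x$. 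Since $\nabla_s U(x)$ does not depend on $\omega$, this yields the orthogonal splitting
\begin{equation*}
\int_O\expect{|\nabla_s U+\chi_s|^2}dx\;=\;\int_O|\nabla_s U(x)|^2dx\;+\;\int_O\expect{|\chi_s(\omega,x)|^2}dx.
\end{equation*}

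The first term on the right-hand side is bounded below by $c\|\nabla U\|_{L^2(O)}^2$ by the continuum Korn inequality \eqref{continuum_korn}, and then by $c\|U\|_{H^1_0(O)}^2$ by Poincar\'e (using $U\in H^1_0(O)^d$). For the second term, I would apply Lemma \ref{kornstoch} for a.e.\ fixed $x$ to $\chi(\cdot,x)\in\mathbf{L}^2_{\mathsf{pot}}(\Omega)^d$ and then integrate over $O$, obtaining
\begin{equation*}
\int_O\expect{|\chi_s(\omega,x)|^2}dx\;\geq\; c\int_O\expect{|\chi(\omega,x)|^2}dx\;=\;c\|\chi\|_{\mathbf{L}^2_{\mathsf{pot}}(\Omega)\otimes L^2(O)}^2.
\end{equation*}
Combining these bounds with the $|Z|^2$-term yields $\expect{\mathbb A y,y}_{Y^*,Y}\geq c(\|U\|_{H^1_0}^2+\|Z\|_{L^2(\Omega\times O)}^2+\|\chi\|_{L^2(\Omega\times O)^d}^2)=c\|y\|_Y^2$, which is the claim.

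The only nonroutine point is verifying that the pointwise (in $x$) application of Lemma \ref{kornstoch} is justified, i.e., that $\chi(\cdot,x)$ indeed lies in $\mathbf{L}^2_{\mathsf{pot}}(\Omega)^d$ for a.e.\ $x\in O$; this follows from the definition of the tensor product space as a closure, by approximating $\chi$ by finite sums $\sum_j\chi_j(\omega)\eta_j(x)$ with $\chi_j\in\mathbf{L}^2_{\mathsf{pot}}(\Omega)^d$ and extracting an a.e.\ convergent subsequence, or equivalently by testing against $c\otimes\eta$ with $c\in\mathbb R^d$ constant and $\eta\in L^2(O)$ to exploit the orthogonality $\mathbf{L}^2_{\mathsf{pot}}(\Omega)\perp\ker D^*$. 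Everything else is a direct assembly of the inequalities above.
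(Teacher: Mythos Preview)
Your proof is correct and follows essentially the same route as the paper's own argument: pointwise coercivity from (B1), orthogonal splitting of $\nabla_s U+\chi_s$ using that $U$ is deterministic and $\expect{\chi_s}=0$, then the continuum Korn inequality \eqref{continuum_korn} on $\nabla_s U$ and the stochastic Korn inequality (Lemma~\ref{kornstoch}) on $\chi_s$. You are somewhat more explicit than the paper about why $\expect{\chi_s}=0$ and about the measurable-selection issue for applying Lemma~\ref{kornstoch} fiberwise in $x$, but these are elaborations of the same proof rather than a different approach.
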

\begin{proof}
First, we have $\expect{\mathbb{A}y,y}_{Y^*,Y}\geq C \expect{\int_O |\nabla_s U(\omega,x)+\chi_s(\omega,x)|^2+|Z(\omega,x)|^2}$. Note that, $\nabla U $ does not depend on $\omega$ and therefore $\int_O\expect{\nabla_s U(x)\cdot \chi_s(\omega,x)}dx=0$. This implies
\begin{equation*}
\expect{\int_O |\nabla_s U(x)+\chi_s(\omega,x)|^2 dx}=\int_O|\nabla_s U(x)|^2dx+\int_O\expect{|\chi_s(\omega,x)|^2 }dx.
\end{equation*}
Using the continuum Korn's inequality (\ref{continuum_korn}) (Section \ref{lattice_graphs}) and the stochastic Korn's inequality (Lemma \ref{kornstoch}), we conclude the proof.
\end{proof}
\smallskip

Before proving Theorem \ref{evgamma}, we prove as an auxiliary result, the existence of joint recovery sequences, which implies the stability of two-scale limits of solutions.
\begin{lemma}\label{stab111}
Let $t\in [0,T]$ and $y_{\varepsilon}\in S_{\varepsilon}(t)$ such that $y_{\varepsilon}\overset{c2}{\rightharpoonup}y\in Y$. For any $\widetilde{y}\in Y$ there exists $\widetilde{y}_{\varepsilon}\in Y_{\varepsilon}$ such that $\widetilde{y_{\varepsilon}}\overset{c2}{\rightharpoonup}{\widetilde{y}}$ and 
\begin{equation*}
\lim_{\varepsilon\rightarrow 0}\cb{\mathcal{E}\e(t,\widetilde{y}_{\varepsilon})+\Psi\e(\widetilde{y}_{\varepsilon}-y_{\varepsilon})-\mathcal{E}\e(t,y_{\varepsilon})}= \mathcal{E}_{0}(t,\widetilde{y})+\Psi_0(\widetilde{y}-y)-\mathcal{E}_{0}(t,y).
\end{equation*}
This implies $y\in S(t)$.
\end{lemma}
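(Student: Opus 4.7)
The plan is a ``mutual recovery'' (or joint recovery) construction, in which $\widetilde{y}_\varepsilon$ is obtained from $y_\varepsilon$ by adding a strong recovery sequence for the \emph{increment} $\widetilde{y}-y$. Concretely, since $\widetilde{U}-U\in H^1_0(O)^d$ and $\widetilde{\chi}-\chi\in (\mathbf{L}^2_{\mathsf{pot}}(\Omega)\otimes L^2(O))^d$, Corollary \ref{rem5} (iii) supplies $\widehat{u}_\varepsilon\in (L^2(\Omega)\otimes L^2_0(O\cap\varepsilon\mathbb{Z}^{d}))^d$ with $\widehat{u}_\varepsilon\overset{2}{\to} \widetilde{U}-U$ and $\nabla^\varepsilon\widehat{u}_\varepsilon\overset{2}{\to}\nabla(\widetilde{U}-U)+(\widetilde{\chi}-\chi)$. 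Set $\widetilde{u}_\varepsilon:=u_\varepsilon+\widehat{u}_\varepsilon$ and $\widetilde{z}_\varepsilon:=z_\varepsilon+\mathcal{F}_\varepsilon(\widetilde{Z}-Z)$. Linearity of two-scale limits together with Lemma \ref{Propfold} (iii) immediately yield $\widetilde{y}_\varepsilon\overset{c2}{\rightharpoonup}\widetilde{y}$, and the key structural property is
\begin{equation*}
\widetilde{y}_\varepsilon-y_\varepsilon=(\widehat{u}_\varepsilon,\mathcal{F}_\varepsilon(\widetilde{Z}-Z))\overset{c2}{\to}\widetilde{y}-y \quad\text{strongly.}
\end{equation*}

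Next, I split the energy difference using that $\mathcal{E}_\varepsilon$ is quadratic plus linear,
\begin{equation*}
\mathcal{E}_\varepsilon(t,\widetilde{y}_\varepsilon)-\mathcal{E}_\varepsilon(t,y_\varepsilon)=\tfrac{1}{2}\expect{\mathbb{A}_\varepsilon(\widetilde{y}_\varepsilon-y_\varepsilon),\widetilde{y}_\varepsilon-y_\varepsilon}_{Y_\varepsilon^*,Y_\varepsilon}+\expect{\mathbb{A}_\varepsilon y_\varepsilon,\widetilde{y}_\varepsilon-y_\varepsilon}_{Y_\varepsilon^*,Y_\varepsilon}-\expect{\int_{O\cap\varepsilon\mathbb{Z}^{d}}\pi_\varepsilon l(t)\cdot\widehat{u}_\varepsilon\,dm_\varepsilon}.
\end{equation*}
The first (quadratic) term converges to $\tfrac{1}{2}\expect{\mathbb{A}(\widetilde{y}-y),\widetilde{y}-y}_{Y^*,Y}$ by Proposition \ref{pro67} (ii) applied with $V(\omega,F)=\tfrac{1}{2}A(\omega)F\cdot F$ to the strongly two-scale convergent increment; the cross term converges to $\expect{\mathbb{A}y,\widetilde{y}-y}_{Y^*,Y}$ by Lemma \ref{basicfacts} (iv) (weak-strong pairing); the load term converges to $\int_O l(t)\cdot(\widetilde{U}-U)\,dx$ since $\pi_\varepsilon l(t)\to l(t)$ strongly. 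Summing yields $\mathcal{E}_0(t,\widetilde{y})-\mathcal{E}_0(t,y)$.

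The main obstacle is the limit passage in the $1$-homogeneous dissipation: Proposition \ref{pro67} (with $p>1$) does not apply and mere weak lower-semicontinuity would only give an inequality, whereas an equality is needed. This is precisely where strong convergence of the increment pays off. By the transformation formula,
\begin{equation*}
\Psi_\varepsilon(\widetilde{y}_\varepsilon-y_\varepsilon)=\expect{\int_{\mathbb{R}^d}\rho(\omega,\mathcal{T}_\varepsilon\mathcal{F}_\varepsilon(\widetilde{Z}-Z)(\omega,x))\mathbf{1}_{A_\varepsilon}(x)\,dx},
\end{equation*}
where $A_\varepsilon\subset\mathbb{R}^d$ differs from $O$ by a boundary layer of vanishing Lebesgue measure. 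By (B2), $\rho(\omega,\cdot)$ is convex and positively $1$-homogeneous, hence Lipschitz in $F$ with a constant uniform in $\omega$, so the functional $W\mapsto\expect{\int_O\rho(\omega,W)}$ is strongly continuous on $L^1(\Omega\times O)^k$. Combined with $\mathcal{T}_\varepsilon\mathcal{F}_\varepsilon(\widetilde{Z}-Z)\to \widetilde{Z}-Z$ strongly in $L^2(\Omega\times\mathbb{R}^d)^k$ (Lemma \ref{Propfold} (iii)) and control of the boundary layer via the uniform $L^2$ bound, this gives $\Psi_\varepsilon(\widetilde{y}_\varepsilon-y_\varepsilon)\to\Psi_0(\widetilde{y}-y)$. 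Finally, stability of $y_\varepsilon$ yields $\mathcal{E}_\varepsilon(t,\widetilde{y}_\varepsilon)+\Psi_\varepsilon(\widetilde{y}_\varepsilon-y_\varepsilon)-\mathcal{E}_\varepsilon(t,y_\varepsilon)\ge 0$, and passing to the limit together with the arbitrariness of $\widetilde{y}\in Y$ delivers $y\in S(t)$.
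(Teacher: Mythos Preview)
Your overall strategy---add to $y_\varepsilon$ a strong two-scale recovery of the increment $\widetilde{y}-y$, then exploit the quadratic structure of $\mathcal{E}_\varepsilon$ via a weak--strong pairing---is exactly the mutual recovery argument the paper uses. Your energy decomposition
\[
\tfrac12\langle\mathbb{A}_\varepsilon(\widetilde{y}_\varepsilon-y_\varepsilon),\widetilde{y}_\varepsilon-y_\varepsilon\rangle+\langle\mathbb{A}_\varepsilon y_\varepsilon,\widetilde{y}_\varepsilon-y_\varepsilon\rangle
\]
is algebraically the same as the paper's single term $\tfrac12\langle\mathbb{A}_\varepsilon(\widetilde{y}_\varepsilon-y_\varepsilon),\widetilde{y}_\varepsilon+y_\varepsilon\rangle$, and both pass to the limit by Lemma~\ref{basicfacts}~(iv) and Lemma~\ref{symmet}. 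One small technicality: you should multiply $\mathcal{F}_\varepsilon(\widetilde{Z}-Z)$ by $\mathbf{1}_{O^{+\varepsilon}}$ so that $\widetilde{z}_\varepsilon\in (L^2(\Omega)\otimes L^2_0(O^{+\varepsilon}\cap\varepsilon\mathbb{Z}^{d}))^k$, as the paper does.

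There is, however, a genuine gap in your treatment of the dissipation. You assert that (B2) forces $\rho(\omega,\cdot)$ to be Lipschitz in $F$ with a constant \emph{uniform in $\omega$}, and then deduce strong $L^1$-continuity of $W\mapsto\langle\int_O\rho(\omega,W)\rangle$. But (B2) gives no growth bound on $\rho$: it only asserts joint measurability, nonnegativity, convexity, and positive $1$-homogeneity. For each fixed $\omega$ the function $\rho(\omega,\cdot)$ is indeed Lipschitz (finite, convex, $1$-homogeneous), yet the constant $\sup_{|F|=1}\rho(\omega,F)$ is not controlled across $\omega$, so your continuity claim is unjustified and the argument for $\Psi_\varepsilon(\widetilde{y}_\varepsilon-y_\varepsilon)\to\Psi_0(\widetilde{y}-y)$ does not close as written.

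The paper sidesteps this entirely by using the explicit structure of the increment rather than abstract strong $L^2$-convergence. Since $\mathcal{T}_\varepsilon\mathcal{F}_\varepsilon(\widetilde{Z}-Z)=\overline{\pi}_\varepsilon(\widetilde{Z}-Z)$, after the transformation one has $\rho(\omega,\pi_\varepsilon(\widetilde{Z}-Z)(\omega,x))$, and Jensen's inequality applied to the average in $\pi_\varepsilon$ gives directly
\[
\Psi_\varepsilon(\widetilde{y}_\varepsilon-y_\varepsilon)\le\Psi_0(\widetilde{y}-y)\qquad\text{for every }\varepsilon,
\]
with no growth hypothesis and no limit passage. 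The matching lower bound comes from Fatou's lemma, using only $\rho\ge 0$ and continuity of $\rho(\omega,\cdot)$ along a pointwise a.e.\ convergent subsequence. Your argument is easily repaired along these lines; the rest of the proof is correct.
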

\begin{proof}
Corollary \ref{rem5} (i) implies that there exists a sequence $v_{\varepsilon}\in L^2(\Omega)\otimes L^2_0(O\cap \varepsilon\mathbb{Z}^{d})^d$ with
\begin{equation*}
v_{\varepsilon}\overset{2}{\rightarrow}\widetilde U-U \text{ in } L^{2}(\Omega\times \mathbb{R}^d)^d,\quad
 \nabla^{\varepsilon}v_{\varepsilon}\overset{2}{\rightarrow}\nabla\widetilde{U}-\nabla U+ \widetilde{\chi}-\chi
 \text{ in }L^{2}(\Omega\times \mathbb{R}^d)^{d\times d}.
\end{equation*}
The sequence $g\e\in \brac{L^2(\Omega)\otimes L^2_0(O^{+\varepsilon}\cap \varepsilon\mathbb{Z}^{d})}^k$, given by $g\e=\mathbf{1}_{O^{+\varepsilon}}\mathcal{F}\e (\widetilde{Z}-Z)$, satisfies
\begin{equation*}
g\e\overset{2}{\to} \widetilde{Z}-Z \quad \text{in }L^{2}(\Omega\times \mathbb{R}^d)^k.
\end{equation*}
We define $\widetilde{y}\e$ componentwise: $\widetilde{u}\e=u\e+v\e$ and $ \widetilde{z}\e=z\e + g\e.$
By weak two-scale convergence of $y\e$, we have that $\widetilde{y}\e\overset{c2}{\rightharpoonup} \widetilde{y}$, and furthermore $\widetilde{y}\e-y\e\overset{c2}{\rightarrow}\widetilde{y}-y$.

The energy functional is quadratic and thus it satisfies
\begin{eqnarray}\label{quadr}
&& \mathcal{E}\e(t, \widetilde{y}\e)-\mathcal{E}\e(t,y\e) \\ & = & \frac{1}{2}\expect{\int_{O^{+\varepsilon}\cap \varepsilon\mathbb{Z}^{d}} A(T_{\frac{x}{\varepsilon}}\omega)\binom{\nabla_{s}^{\varepsilon}(\widetilde{u}\e-u\e)(\omega,x)}{(\widetilde{z}\e-z\e)(\omega,x)}\cdot \binom{\nabla_{s}^{\varepsilon}(\widetilde{u}\e+u\e)(\omega,x)}{(\widetilde{z}\e+z\e)(\omega,x)}dm_{\varepsilon}(x)} \nonumber\\ & & -\expect{\int_{O\cap \varepsilon\mathbb{Z}^{d}}\pi\e l(t)(x) \cdot (\widetilde{u}\e-u\e)(\omega,x) dm_{\varepsilon}(x)}.\nonumber
\end{eqnarray}
We rewrite the first term on the right-hand side as
\begin{equation*}
\frac{1}{2}\expect{\int_{\mathbb{R}^d}A(\omega)\binom{\mathcal{T}\e\nabla_{s}^{\varepsilon}(\widetilde{u}\e-u\e)(\omega,x)}{\mathcal{T}\e(\widetilde{z}\e-z\e)(\omega,x)}\cdot \binom{\mathcal{T}\e\nabla_{s}^{\varepsilon}(\widetilde{u}\e+u\e)(\omega,x)}{\mathcal{T}\e(\widetilde{z}\e+z\e)(\omega,x)}dx}.
\end{equation*}
This expression is a scalar product of strongly and weakly convergent sequences (see Lemma \ref{symmet}), and therefore it converges to (as $\varepsilon\rightarrow 0$) 
\begin{eqnarray*}
&& \frac{1}{2}\expect{\int_{\mathbb{R}^d} A(\omega) \binom{\nabla_s \widetilde{U}_s-\nabla_s U+\widetilde{\chi}_s-\chi_s}{\widetilde{Z}-Z}\cdot \binom{\nabla_s \widetilde{U}+\nabla_s U+\widetilde{\chi}_s+\chi_s}{\widetilde{Z}+Z}dx}\\ & = & \frac{1}{2}\brac{\expect{\mathbb{A}\widetilde{y},\widetilde{y}}_{Y^*,Y}-\expect{\mathbb{A}y,y}_{Y^*,Y}}.
\end{eqnarray*}
The second term on the right-hand side of (\ref{quadr}) converges to $-\expect{\int_{O}l(t)\cdot (\widetilde{U}-U)}$. 
Furthermore, by Jensen's inequality we obtain
\begin{align*}
\Psi\e(\widetilde{y}\e-y\e) & \leq \expect{\int_{\varepsilon\mathbb{Z}^{d} }\rho(\omega,\pi\e(\widetilde{Z}-Z)(\omega,x))dm_{\varepsilon}(x)} \\ & \leq \expect{\int_{\varepsilon\mathbb{Z}^{d} }\dashint_{x+\varepsilon \Box}\rho(\omega,\widetilde{Z}(\omega,y)-Z(\omega,y))dy dm\e(x)}\\ & =\Psi_0(\widetilde{y}-y).
\end{align*}
On the other hand, using Fatou's lemma and the fact that $\rho(\omega,\cdot)$ is continuous, we obtain
\begin{equation*}
\liminf_{\varepsilon\rightarrow 0}\Psi\e(\widetilde{y}\e-y\e)\geq \Psi_0(\widetilde{y}-y).
\end{equation*}
This concludes the proof.
\end{proof}
\smallskip

\begin{proof}[Proof of Theorem  \ref{evgamma}] Step 1. Compactness and stability.
We consider the sequence $v_{\varepsilon}:=(\mathcal{T}\e u\e,\mathcal{T}\e \nabla^{\varepsilon} u\e,\mathcal{T}\e z\e ):[0,T]\rightarrow L^{2}(\Omega\times \mathbb{R}^d)^d\times L^{2}(\Omega\times \mathbb{R}^d)^{d\times d}\times L^{2}(\Omega\times \mathbb{R}^d)^k =:H$. By Remark \ref{remark10}, $v_{\varepsilon}$ is uniformly bounded in $C^{Lip}([0,T],H)$. Therefore, the Arzel\`a-Ascoli theorem implies that there exist $v\in C^{Lip}([0,T],H)$ and a subsequence (not relabeled), such that for every $t\in[0,T]$ 
\begin{equation*}
v_{\varepsilon}(t)\rightharpoonup v(t) \text{ weakly in }H.
\end{equation*}
Moreover, by boundedness of $y\e(t)$ and the above, we conclude that for every $t\in [0,T]$, $v(t)=(U(t), \nabla U(t)+\chi(t),Z(t))$, for some $y(t)=(U(t),Z(t),\chi(t))\in Y$. Here we use the fact that if $z\e\in L^2(\Omega)\otimes L^2_0(O\e^+\cap \varepsilon\mathbb{Z}^{d})^k$ converges in the weak two-scale sense, then, similarly as in Corollary \ref{cor111}, the limit may be identified with an $L^2(\Omega\times O)^k$ function. In other words, we have $y_{\varepsilon}(t)\overset{c2}{\rightharpoonup}y(t)$.
Lemma \ref{stab111} implies that $y(t)\in S(t)$ for every $t\in [0,T]$.

Step 2. Energy balance. We pass to the limit $\varepsilon\to 0$ in (\ref{enbal1}) and show that $y(t)$ satisfies
\begin{equation}\label{ebcont}
\mathcal{E}_{0}(t,y(t))+\int_0^t \Psi_0(\dot{y}(s))ds\leq \mathcal{E}_{0}(0,y(0))-\int_0^t \int_O \dot{l}(s)\cdot U(s) dx ds.
\end{equation}
The (EB) equality of the discrete system (\ref{enbal1}) reads
\begin{align}\label{disc}
& \frac{1}{2}\expect{\mathbb{A}\e y\e(t),y\e(t)}_{Y\e^*,Y\e}-\expect{\int_{O\cap \varepsilon\mathbb{Z}^{d}}\pi\e l(t)(x)\cdot u\e(t)(\omega,x)dm_{\varepsilon}(x)}+\int_{0}^t \Psi\e(\dot{y}\e(s))ds  \\  = &  \frac{1}{2}\expect{\mathbb{A}\e y\e(0),y\e(0)}_{Y\e^*,Y\e}-\expect{\int_{O\cap \varepsilon\mathbb{Z}^{d} }\pi\e l(0)(x)\cdot u\e(0)(\omega,x)dm_{\varepsilon}(x)} \nonumber \\ & -\int_0^t \expect{\int_{O\cap \varepsilon\mathbb{Z}^{d}}\pi\e \dot{l}(s)(x)\cdot u\e(s)(\omega,x)dm_{\varepsilon}(x)}ds.\nonumber
\end{align}
The strong convergence of the initial data implies that the first two terms on the right-hand side converge to $\mathcal{E}(0,y(0))$. The remaining term on the right-hand side converges to $-\int_0^t \int_O \dot{l}(s)\cdot U(s) dx ds$ by the Lebesgue dominated convergence theorem. Moreover, using Proposition \ref{pro67} and the strong convergence of $\pi\e l(t)$ we obtain
\begin{eqnarray*}
&&
\liminf_{\varepsilon\to 0}\brac{\frac{1}{2}\expect{\mathbb{A}\e y\e(t),y\e(t)}_{Y\e^*,Y\e}-\expect{\int_{O\cap \varepsilon\mathbb{Z}^{d}}\pi\e l(t)(x)\cdot u\e(t)(\omega,x)dm_{\varepsilon}(x)}} \\&\geq & \mathcal{E}_{0}(t,y(t)).
\end{eqnarray*}
To treat the last term on the left-hand side of (\ref{disc}), we consider a partition  $\cb{t_i}$ of $[0,t]$. We have
\begin{equation*}
\sum_i \Psi_0(y(t_{i})-y(t_{{i-1}}))\leq \liminf_{\varepsilon\rightarrow 0} \sum_i \Psi\e(y\e(t_{i})-y\e(t_{{i-1}})).
\end{equation*} 
Taking the supremum over all partitions $\cb{t_i}$ of $[0,t]$ and exploiting the homogeneity of $\Psi_0$, we obtain
\begin{equation}\label{dissipation10}
\int_{0}^t \Psi_0(\dot{y}(s))dt\leq \liminf_{\varepsilon \rightarrow 0}\int_{0}^t\Psi\e(\dot{y}\e(s))ds.
\end{equation} 
This proves (\ref{ebcont}). The other inequality in the (EB) equality of the limit system can be shown using the stability of $y$ (see \cite[Section 2.3.1]{mielke2005evolution}) and therefore we conclude that $y$ satisfies (\ref{enbal2}). Moreover, using this equality and the fact that the right-hand side of (\ref{disc}) converges to $\mathcal{E}_0(0,y(0))-\int_{0}^t\int_{O}\dot{l}(s)\cdot U(s) dx ds$, we conclude that
\begin{eqnarray}\label{useful:equation}
&& \lim_{\varepsilon\to 0} \brac{\mathcal{E}\e(t,y\e(t))+\int_{0}^{T}\Psi\e(\dot{y}\e(s))ds}\\ &=& \lim_{\varepsilon\to 0}\brac{\mathcal{E}_{\varepsilon}(0,y\e(0))-\int_{0}^t\expect{\int_{O\cap \varepsilon\mathbb{Z}^d} \pi\e \dot{l}(s)\cdot u\e(s)dm\e}}ds \nonumber\\ & = & \mathcal{E}_0(0,y(0))-\int_{0}^t\int_{O}\dot{l}(s)\cdot U(s) dx ds = \mathcal{E}_{0}(t,y(t))+\int_0^t \Psi_0(\dot{y}(s))ds. \nonumber
\end{eqnarray}
\smallskip

Step 3. Strong convergence.
To obtain strong two-scale convergence, we construct a strong recovery sequence $\widetilde{y}\e(t)\in Y\e$ for $y(t)\in Y$ (for every $t\in [0,T]$) in the sense that
\begin{equation*}
\widetilde{y}\e(t)\overset{c2}{\rightarrow}y(t),
\end{equation*}
(cf. the proof of Lemma \ref{stab111}). For notational convenience, we drop the ``t" from the sequences and we denote $v\e{}:=(\mathcal{T}\e u\e{},\mathcal{T}\e\nabla^{\varepsilon}u\e{},\mathcal{T}\e z\e{})$, $\widetilde{v}\e{}:=(\mathcal{T}\e \widetilde{u}\e{},\mathcal{T}\e\nabla^{\varepsilon}\widetilde{u}\e{},\mathcal{T}\e \widetilde{z}\e{})$ and $V{}:=(U{},\nabla U{}+\chi{},Z{})$. By the triangle inequality,
\begin{equation}\label{inequality123}
\|v\e{} - V{}\|_H \leq \|v\e{} - \widetilde{v}\e{} \|_H+\|\widetilde{v}\e{} - V{}\|_H.  
\end{equation}
The second term on the right-hand side vanishes in the limit $\varepsilon\rightarrow 0$. Also, since the energy is quadratic,
\begin{multline*}
\|v\e{} - \widetilde{v}\e{}\|^2_H  \leq  C \bigg( \mathcal{E}\e(t,y\e{})-\mathcal{E}(t,\widetilde{y}\e{})+\expect{\mathbb{A}\e\widetilde{y}\e,\widetilde{y}\e-y\e}_{Y\e^*,Y\e}\\+\expect{\int_{O\cap \varepsilon\mathbb{Z}^{d}} \pi\e l(t)(x) \cdot (u\e-\widetilde{u}\e)(\omega,x)dm_{\varepsilon}(x)}\bigg).
\end{multline*}
The last two terms on the right-hand side vanish as $\varepsilon\to 0$ (cf. the proof of Lemma \ref{stab111}).
The first two terms are treated as follows. 
As a result of (\ref{useful:equation}), we obtain that $\limsup_{\varepsilon\rightarrow 0}\mathcal{E}\e(t,y\e)+\liminf_{\varepsilon\rightarrow 0}\int_{0}^{T}\Psi\e(\dot{y}\e(s))ds=\mathcal{E}_{0}(t,y)+\int_{0}^t \Psi_0(\dot{y}(s))ds$ and using (\ref{dissipation10}) it follows that
\begin{align*}
\limsup_{\varepsilon\rightarrow 0}(\mathcal{E}\e(t,y\e)-\mathcal{E}\e(t,\widetilde{y}\e))\leq \limsup_{\varepsilon\rightarrow 0}\mathcal{E}\e(t,y\e)-\mathcal{E}_{0}(t,y)\leq 0.
\end{align*}
This shows that the first two terms on the right-hand side of (\ref{inequality123}) vanish in the limit $\varepsilon\to 0$ and therefore the claim about strong convergence follows. 

To show that the convergence holds for the whole sequence, for a fixed $t\in [0,T]$, we consider $e\e(t):=\|v\e(t)-V(t)\|_H$.
For any subsequence $\varepsilon'$ of $\varepsilon$, we can find a further subsequence $\varepsilon''$ such that $e_{\varepsilon''}(t)\rightarrow 0$ by the uniqueness of the solution $y$. From this follows that the whole sequence converges in the sense given in the statement of the theorem.
\end{proof}
\smallskip

The proof of Theorem \ref{evgamma4} follows the same strategy and it is very similar to the proof of Theorem \ref{evgamma}. Therefore, we only sketch the proof, emphasizing the differences from the prior setting.
\begin{proof}[Sketch of proof of Theorem \ref{evgamma4}]
Step 1. Compactness. We consider the following sequence $v\e:=\brac{\mathcal{T}_{\varepsilon} u\e, \mathcal{T}_{\varepsilon} \nabla^{\varepsilon}u\e, \mathcal{T}_{\varepsilon} z\e, \mathcal{T}_{\varepsilon} \varepsilon^{\gamma} \nabla^{\varepsilon}z\e}:[0,T]\to L^2(\Omega \times \mathbb{R}^d)^d \times L^2(\Omega\times \mathbb{R}^d)^{d\times d}\times L^2(\Omega\times \mathbb{R}^d)^k\times L^2(\Omega\times \mathbb{R}^d)^{k\times d}=:H$. With help of Remark \ref{remark:1292} and Corollary \ref{cor111}, analogously as in the proof of Theorem \ref{evgamma}, we obtain that (up to a subsequence) for every $t\in [0,T]$
\begin{equation*}
y\e(t)\overset{c2}{\rightharpoonup} (q(t),\chi_1(t)), \quad \varepsilon^{\gamma}\nabla^{\varepsilon}z\e \overset{2}{\rightharpoonup} \chi_2(t) \text{ in }L^p(\Omega\times \mathbb{R}^{k\times d}),
\end{equation*}
where $q(t)\in Q$, $\chi_1(t) \in \brac{\mathbf{L}^2_{\mathsf{pot}}(\Omega)\otimes L^2(O)}^d$ and $\chi_2(t) \in \brac{\mathbf{L}^2_{\mathsf{pot}}(\Omega)\otimes L^2(O)}^k$.
\smallskip

Step 2. Stability.
We fix $t\in [0,T]$.
For an arbitrary $\tilde{q}\in Q$, similarly as in the proof of Theorem \ref{gamma2} (ii), we can find $\tilde{\chi} \in\brac{ \mathbf{L}^2_{\mathsf{pot}}(\Omega)\times L^2(O)}^d$ such that
\begin{equation*}
\mathcal{E}_{\mathsf{hom}}(t,\tilde{q})=\mathcal{E}_0(t,(\tilde{q},\tilde{\chi})).
\end{equation*}
Corollary \ref{rem5} (iii) implies that for $\brac{\tilde{U}-U(t),\tilde{\chi}-\chi_1(t)}$ there exists a sequence $v\e \in L^2(\Omega)\times L^2_0(O\cap \varepsilon\mathbb{Z}^{d})^d $ such that
\begin{equation*}
v\e \overset{2}{\to} \tilde{U}-U(t), \quad \nabla^{\varepsilon}v\e\overset{2}{\to} \nabla \tilde{U}-\nabla U(t)+\tilde{\chi}-\chi_1(t).
\end{equation*}
Furthermore, Corollary \ref{rem5} (iv) implies that for $(\tilde{Z}-Z(t), -\chi_2(t))$, there exists a sequence $g\e \in L^2(\Omega)\times L^2_0(O\cap \varepsilon\mathbb{Z}^{d})^k $ such that
\begin{equation*}
g\e \overset{2}{\to} \tilde{Z}-Z(t), \quad \varepsilon^{\gamma}\nabla^{\varepsilon}g\e\overset{2}{\to} -\chi_2(t).
\end{equation*}
We define $\tilde{y}\e$ componentwise: $\tilde{u}\e=u\e+v\e$ and $\tilde{z}\e=z\e+g\e$. Following the steps in the proof of Lemma \ref{stab111} (with the new energy $\mathcal{E}\e^{\gamma}$), we obtain
\begin{eqnarray*}
& & \lim_{\varepsilon\rightarrow 0}\cb{\mathcal{E}\e^{\gamma}(t,\widetilde{y}_{\varepsilon})+\Psi\e(\widetilde{y}_{\varepsilon}-y_{\varepsilon})-\mathcal{E}\e^{\gamma}(t,y_{\varepsilon})} \\ & = & \mathcal{E}_{0}(t,(\tilde{q},\tilde{\chi}))+\Psi_{\mathsf{hom}}(\widetilde{q}-q(t))-\mathcal{E}_{0}(t,(q(t),\chi_1(t)))\\ && -\int_{O}\expect{G(\omega)\chi_2(t)(\omega,x)\cdot \chi_2(t)(\omega,x)}dx\\
& \leq & \mathcal{E}_{\mathsf{hom}}(t,\tilde{q})+\Psi_{\mathsf{hom}}(\tilde{q}-q(t))-\mathcal{E}_{\mathsf{hom}}(t,q(t)).
\end{eqnarray*}
As a result of this, we obtain $q(t)\in S_{\mathsf{hom}}(t)$. Another important fact following from this inequality is obtained by setting $\tilde{q}=q(t)$ and using positive 1-homogeneity of $\Psi_{\mathsf{hom}}$,
\begin{equation*}
\mathcal{E}_0(t,(q(t),\chi_1(t)))+\int_{O}\expect{G(\omega)\chi_2(t)(\omega,x)\cdot \chi_2(t)(\omega,x)}dx \leq \mathcal{E}_{\mathsf{hom}}(t,q(t)).
\end{equation*}
As a result of this, we conclude that $\chi_1(t)$ is the corrector corresponding to $q(t)$, i.e., 
\begin{equation}\label{equation:1653}
\mathcal{E}_{\mathsf{hom}}(t,q(t))=\mathcal{E}_0(t,(q(t),\chi_1(t)))
\end{equation}
and, moreover, we obtain that $\chi_2=0$.
\smallskip

Step 3. Energy balance. The energy balance equality is obtained in the same manner as in the proof of Theorem \ref{evgamma} by using the assumptions on the initial data and using that
\begin{align*}
\liminf_{\varepsilon\to 0}\mathcal{E}_{\varepsilon}^{\gamma}(t,y\e(t)) & \geq \mathcal{E}_0(t,(q(t),\chi_1(t)))+\int_{O}\expect{G(\omega)\chi_2(\omega,x):\chi_2(\omega,x)}dx\\ & = \mathcal{E}_{\mathsf{hom}}(t,q(t)),
\end{align*}
which is obtained with the help of Proposition \ref{pro67} (i).
\smallskip

Step 4. Strong convergence. This part of the proof is obtained in the same way as strong convergence in the proof of Theorem \ref{evgamma}. 
We remark that the recovery sequence to be used relies on the construction from Proposition \ref{rem5} for $(U(t),\chi_1(t))$ (for the "$u\e$"-variable) and for $(Z(t),0)$ (for the "$z\e$"-variable). Also, the observation (\ref{equation:1653}) is useful in this part. 
Convergence for the whole sequence is obtained as before by a contradiction argument.
\end{proof}
\appendix
\section{Abstract evolutionary rate-independent systems}\label{appendix2}
We consider evolutionary rate-independent systems in the global energetic setting. For a detailed study, we refer the reader to \cite{mielke2005evolution,mielke2015rate}. We consider the Hilbert space case and equations involving quadratic energy functionals. The main ingredients of the theory are: 
\begin{itemize}
\item state space: $Y$ Hilbert space (dual $Y^*$);
\item external force: $l\in C^1([0,T],Y^*)$;
\item energy functional: $\mathcal{E}(t,y)=\frac{1}{2}\langle Ay,y \rangle_{Y^*,Y}-\langle l(t),y \rangle_{Y^*,Y}$, $A\in Lin(Y,Y^*)$ is self-adjoint and coercive, i.e.,  there exists $\alpha>0$ such that $\expect{Ay,y}\geq \alpha \|y\|^2$ for all $y\in Y$;
\item dissipation potential: $\Psi: Y\rightarrow [0,+\infty]$, which is convex, proper, lower-semicontinuous and positively homogeneous of order 1, i.e.,  $\Psi(\alpha y)=\alpha \Psi(y)$ for all $\alpha>0$ and $y\in Y$ and $\Psi(0)=0$.
\end{itemize} 
After a prescribed initial state $y_0 \in Y$, the system's current configuration is described by $y:(0,T]\rightarrow Y$. We say that $y\in AC([0,T],Y)$ is an energetic solution associated with $\brac{\mathcal{E},\Psi}$ if for all $t\in [0,T]$
\begin{align*}
& \mathcal{E}(t,y(t))\leq \mathcal{E}(t,\widetilde{y})+\Psi(\widetilde{y}-y(t)) \text{ for all }\widetilde{y}\in Y\quad \textit{(stability)},           \tag{S}\label{stab}\\
& \mathcal{E}(t,y(t))+\int_0^t \Psi(\dot{y}(s))ds=\mathcal{E}(0,y(0))-\int_0^t \expect{\dot{l}(s),y(s)}ds \quad \textit{(energy balance)}. \tag{EB}\label{enbal}
\end{align*} 
The stability condition is usually stated equivalently using the set of stable states:
\begin{align*}
\mathcal{S}(t):=\cb{y:\mathcal{E}(t,y)\leq \mathcal{E}(t,\widetilde{y})+\Psi(\widetilde{y}-y) \text{ for all }\widetilde{y}\in Y  }.
\end{align*}
(S) is equivalent to $y(t)\in\mathcal{S}(t)$.

For the proof of the following existence result, see \cite{mielke2005evolution}.
\begin{theorem}\label{abstract}
Let $l\in C^1([0,T],Y^*)$ and $y_0\in S(0)$. There exists a unique energetic solution $y\in C^{Lip}([0,T],Y)$ associated with $\brac{\mathcal{E},\Psi}$ with $y(0)=y_0$. Moreover,
\begin{equation*}
\|y(t)-y(s)\|_{Y}\leq \frac{Lip(l)}{C}|t-s| \quad \text{for any }t,s\in [0,T].
\end{equation*}
\end{theorem}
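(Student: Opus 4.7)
The plan is to follow the standard time-incremental minimization scheme for rate-independent systems, adapted to the quadratic Hilbert space setting, where strict convexity of the energy plus positive $1$-homogeneity of $\Psi$ gives uniqueness and a clean Lipschitz bound. Fix a partition $\Pi_\tau=\{0=t_0<t_1<\cdots<t_N=T\}$ with $\tau:=\max_k(t_k-t_{k-1})$ and set $y_0^\tau:=y_0$. For $k=1,\ldots,N$, define $y_k^\tau$ as the unique minimizer of $y\mapsto \mathcal E(t_k,y)+\Psi(y-y_{k-1}^\tau)$ over $Y$; existence and uniqueness follow from strict convexity and coercivity ($A$ coercive, $\Psi\geq 0$) and lower semicontinuity (weak l.s.c.\ of $\Psi$).

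The first step is to derive a priori bounds. From the minimality $y_k^\tau$ satisfies a discrete stability condition $\mathcal E(t_k,y_k^\tau)\leq \mathcal E(t_k,\widetilde y)+\Psi(\widetilde y-y_k^\tau)$ for all $\widetilde y\in Y$, and a discrete energy inequality
\begin{equation*}
\mathcal E(t_k,y_k^\tau)+\Psi(y_k^\tau-y_{k-1}^\tau)\leq \mathcal E(t_k,y_{k-1}^\tau)=\mathcal E(t_{k-1},y_{k-1}^\tau)-\int_{t_{k-1}}^{t_k}\langle \dot l(s),y_{k-1}^\tau\rangle\,ds.
\end{equation*}
Summing and using the coercivity of $A$ and a Gronwall argument yields a uniform bound on $\max_k\|y_k^\tau\|_Y$ and on the total dissipation $\sum_k\Psi(y_k^\tau-y_{k-1}^\tau)$. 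Define the piecewise-constant interpolant $y_\tau(t):=y_k^\tau$ for $t\in[t_{k-1},t_k)$.

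The key qualitative improvement specific to the quadratic/$1$-homogeneous case is a discrete Lipschitz estimate. Test the stability of $y_{k-1}^\tau$ at time $t_{k-1}$ with $\widetilde y=y_k^\tau$ and the stability of $y_k^\tau$ at time $t_k$ with $\widetilde y=y_{k-1}^\tau$; adding and using that $\Psi$ is $1$-homogeneous (so $\Psi(w)+\Psi(-w)\geq 0$, but more usefully the cross-terms cancel) together with the quadratic identity
\begin{equation*}
\mathcal E(t,y_1)-\mathcal E(t,y_2)-\tfrac12\langle A(y_1-y_2),y_1-y_2\rangle=\langle Ay_2-l(t),y_1-y_2\rangle
\end{equation*}
gives $\alpha\|y_k^\tau-y_{k-1}^\tau\|^2\leq \langle l(t_k)-l(t_{k-1}),y_k^\tau-y_{k-1}^\tau\rangle\leq \mathrm{Lip}(l)(t_k-t_{k-1})\|y_k^\tau-y_{k-1}^\tau\|$, whence $\|y_k^\tau-y_{k-1}^\tau\|\leq \tfrac{\mathrm{Lip}(l)}{\alpha}(t_k-t_{k-1})$. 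Thus $y_\tau$ is uniformly Lipschitz in $t$.

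Next I apply an Arzel\`a--Ascoli/Helly-type selection theorem: along a subsequence $\tau\to 0$, $y_\tau(t)\rightharpoonup y(t)$ in $Y$ for every $t\in[0,T]$, with $y\in C^{\mathrm{Lip}}([0,T],Y)$ satisfying the same Lipschitz bound. Stability (S) for the limit $y(t)$ follows by testing the discrete stability with arbitrary $\widetilde y\in Y$ and passing $\tau\to 0$, using continuity of $\mathcal E(t,\cdot)$ and $\Psi$. For the energy balance (EB), the upper bound $\mathcal E(t,y(t))+\int_0^t\Psi(\dot y)\,ds\leq \mathcal E(0,y_0)-\int_0^t\langle\dot l(s),y(s)\rangle\,ds$ is obtained by passing to the $\liminf$ in the summed discrete inequality, using weak l.s.c.\ of $\mathcal E(t,\cdot)$ and l.s.c.\ of the dissipation functional $y\mapsto \int_0^t\Psi(\dot y)\,ds$ on $C^{\mathrm{Lip}}$. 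The opposite inequality is the standard consequence of the stability condition and the chain rule for $t\mapsto \mathcal E(t,y(t))$ applied to the quadratic energy, yielding (EB).

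Finally, uniqueness: suppose $y_1,y_2$ are two energetic solutions with the same initial datum. Testing stability of $y_1(t)$ with $y_2(t)$ and vice versa, adding, and using the quadratic structure gives $\alpha\|y_1(t)-y_2(t)\|^2\leq 0$, so $y_1\equiv y_2$. The Lipschitz bound for the limit inherits the discrete one. The main technical obstacle is the passage to the limit in the dissipation term $\sum_k\Psi(y_k^\tau-y_{k-1}^\tau)\to \int_0^t\Psi(\dot y(s))\,ds$: this uses $1$-homogeneity of $\Psi$, which implies $\sum_k\Psi(y_\tau(t_k)-y_\tau(t_{k-1}))$ equals the $\Psi$-variation along $y_\tau$, and then l.s.c.\ of the variation under pointwise weak convergence --- a classical Helly-type argument.
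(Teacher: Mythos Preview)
The paper does not prove Theorem~\ref{abstract}; it simply cites \cite{mielke2005evolution}. Your overall time-incremental scheme is the standard Mielke--Theil strategy and is the right route, but two of your key computations do not work as written.

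First, the discrete Lipschitz estimate. Adding the two stability inequalities as you describe (stability of $y_{k-1}^\tau$ at $t_{k-1}$ tested with $y_k^\tau$, and stability of $y_k^\tau$ at $t_k$ tested with $y_{k-1}^\tau$) makes the quadratic parts cancel identically and leaves only $0\leq \Psi(y_k^\tau-y_{k-1}^\tau)+\Psi(y_{k-1}^\tau-y_k^\tau)$, which is vacuous. To obtain $\alpha\|y_k^\tau-y_{k-1}^\tau\|^2\leq \langle l(t_k)-l(t_{k-1}),y_k^\tau-y_{k-1}^\tau\rangle$ you must use the \emph{minimality} of $y_k^\tau$ (not merely its stability): the Euler--Lagrange inclusion gives $-(Ay_k^\tau-l(t_k))\in\partial\Psi(y_k^\tau-y_{k-1}^\tau)$, hence $\langle-(Ay_k^\tau-l(t_k)),y_k^\tau-y_{k-1}^\tau\rangle=\Psi(y_k^\tau-y_{k-1}^\tau)$, whereas stability of $y_{k-1}^\tau$ at $t_{k-1}$ only yields $-(Ay_{k-1}^\tau-l(t_{k-1}))\in\partial\Psi(0)$, so $\langle-(Ay_{k-1}^\tau-l(t_{k-1})),y_k^\tau-y_{k-1}^\tau\rangle\leq\Psi(y_k^\tau-y_{k-1}^\tau)$. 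Subtracting these two relations produces the desired coercivity inequality.

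Second, and more seriously, your uniqueness argument fails outright. Testing stability of $y_1(t)$ with $y_2(t)$ and vice versa at the \emph{same} time $t$ and adding gives only $0\leq \Psi(y_2-y_1)+\Psi(y_1-y_2)$; the quadratic terms cancel and no $\alpha\|y_1-y_2\|^2$ survives. Uniqueness in the quadratic setting is obtained instead from the equivalent differential inclusion $0\in\partial\Psi(\dot y)+Ay-l(t)$: monotonicity of $\partial\Psi$ yields $\langle A(y_1-y_2),\dot y_1-\dot y_2\rangle\leq 0$, hence $t\mapsto\tfrac{1}{2}\langle A(y_1-y_2),y_1-y_2\rangle$ is nonincreasing and vanishes at $t=0$. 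With these two repairs your plan goes through and matches the argument in \cite{mielke2005evolution}.
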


\section*{Acknowledgments}
We thank Alexander Mielke for useful discussions and valuable comments. SN and MV were supported by the DFG
in the context of TU Dresden's Institutional Strategy ``The Synergetic University''.

\bibliographystyle{abbrv}
\bibliography{REFAB}

\begin{thebibliography}{10}

\bibitem{akcoglu1981ergodic}
M.~Akcoglu and U.~Krengel.
\newblock Ergodic theorems for superadditive processes.
\newblock {\em J. reine angew. Math}, 323(53-67):106--127, 1981.

\bibitem{alicandro2011integral}
R.~Alicandro, M.~Cicalese, and A.~Gloria.
\newblock Integral representation results for energies defined on stochastic
  lattices and application to nonlinear elasticity.
\newblock {\em Arch. Ration. Mech. Anal.}, 200(3):881--943, 2011.

\bibitem{allaire1992homogenization}
G.~Allaire.
\newblock Homogenization and two-scale convergence.
\newblock {\em SIAM J. Math. Anal.}, 23(6):1482--1518, 1992.

\bibitem{andrews1998stochastic}
K.~T. Andrews and S.~Wright.
\newblock Stochastic homogenization of elliptic boundary-value problems with
  {$L^{p}$}-data.
\newblock {\em Asymptot. Anal.}, 17(3):165--184, 1998.

\bibitem{arbogast1990derivation}
T.~Arbogast, J.~Douglas, Jr, and U.~Hornung.
\newblock Derivation of the double porosity model of single phase flow via
  homogenization theory.
\newblock {\em SIAM J. Math. Anal.}, 21(4):823--836, 1990.

\bibitem{attouch1984variational}
H.~Attouch.
\newblock {\em Variational convergence for functions and operators}, volume~1.
\newblock Pitman Advanced Publishing Program, 1984.

\bibitem{ben2017moment}
J.~Ben-Artzi, D.~Marahrens, and S.~Neukamm.
\newblock Moment bounds on the corrector of stochastic homogenization of
  non-symmetric elliptic finite difference equations.
\newblock {\em Comm. Partial Differential Equations}, 42(2):179--234, 2017.

\bibitem{bourgeat1994stochastic}
A.~Bourgeat, A.~Mikelic, and S.~Wright.
\newblock Stochastic two-scale convergence in the mean and applications.
\newblock {\em J. reine angew. Math}, 456(1):19--51, 1994.

\bibitem{brezis2010functional}
H.~Brezis.
\newblock {\em Functional analysis, Sobolev spaces and partial differential
  equations}.
\newblock Springer Science \& Business Media, 2011.

\bibitem{cazeaux2015homogenization}
P.~Cazeaux, C.~Grandmont, and Y.~Maday.
\newblock Homogenization of a model for the propagation of sound in the lungs.
\newblock {\em Multiscale Model. Simul.}, 13(1):43--71, 2015.

\bibitem{cioranescu2004homogenization}
D.~Cioranescu, A.~Damlamian, and R.~De~Arcangelis.
\newblock Homogenization of nonlinear integrals via the periodic unfolding
  method.
\newblock {\em C. R. Math. Acad. Sci. Paris}, 339(1):77--82, 2004.

\bibitem{cioranescu2012periodic}
D.~Cioranescu, A.~Damlamian, P.~Donato, G.~Griso, and R.~Zaki.
\newblock The periodic unfolding method in domains with holes.
\newblock {\em SIAM Journal on Mathematical Analysis}, 44(2):718--760, 2012.

\bibitem{cioranescu2002periodic}
D.~Cioranescu, A.~Damlamian, and G.~Griso.
\newblock Periodic unfolding and homogenization.
\newblock {\em C. R. Math. Acad. Sci. Paris}, 335(1):99--104, 2002.

\bibitem{cioranescu2008periodic}
D.~Cioranescu, A.~Damlamian, and G.~Griso.
\newblock The periodic unfolding method in homogenization.
\newblock {\em SIAM J. Math. Anal.}, 40(4):1585--1620, 2008.

\bibitem{faggionato2008random}
A.~Faggionato.
\newblock Random walks and exclusion processes among random conductances on
  random infinite clusters: homogenization and hydrodynamic limit.
\newblock {\em Electronic Journal of Probability}, 13:2217--2247, 2008.

\bibitem{gloria2015quantification}
A.~Gloria, S.~Neukamm, and F.~Otto.
\newblock Quantification of ergodicity in stochastic homogenization: optimal
  bounds via spectral gap on {$\text{G}$}lauber dynamics.
\newblock {\em Invent. Math.}, 199(2):455--515, 2015.

\bibitem{griso2004error}
G.~Griso.
\newblock Error estimate and unfolding for periodic homogenization.
\newblock {\em Asymptot. Anal.}, 40(3, 4):269--286, 2004.

\bibitem{hahn2010discrete}
M.~Hahn, T.~Wallmersperger, and B.-H. Kr{\"o}plin.
\newblock Discrete element representation of continua: Proof of concept and
  determination of the material parameters.
\newblock {\em Comp. Mater. Sci.}, 50(2):391--402, 2010.

\bibitem{han2012plasticity}
W.~Han and B.~D. Reddy.
\newblock {\em Plasticity: mathematical theory and numerical analysis},
  volume~9.
\newblock Springer Science \& Business Media, 2012.

\bibitem{hanke2011homogenization}
H.~Hanke.
\newblock Homogenization in gradient plasticity.
\newblock {\em Math. Models Methods Appl. Sci.}, 21(08):1651--1684, 2011.

\bibitem{hanke2017phase}
H.~Hanke and D.~Knees.
\newblock A phase-field damage model based on evolving microstructure.
\newblock {\em Asymptotic Analysis}, 101(3):149--180, 2017.

\bibitem{heida2011extension}
M.~Heida.
\newblock An extension of the stochastic two-scale convergence method and
  application.
\newblock {\em Asymptot. Anal.}, 72(1-2):1--30, 2011.

\bibitem{heida2017stochastic1}
M.~Heida.
\newblock Stochastic homogenization of rate-independent systems and
  applications.
\newblock {\em Contin. Mech. Thermodyn.}, 29(3):853--894, 2017.

\bibitem{heida2017stochastic}
M.~Heida and S.~Nesenenko.
\newblock Stochastic homogenization of rate-dependent models of monotone type
  in plasticity.
\newblock {\em arXiv preprint arXiv:1701.03505}, 2017.

\bibitem{heida2016non}
M.~Heida and B.~Schweizer.
\newblock Non-periodic homogenization of infinitesimal strain plasticity
  equations.
\newblock {\em ZAMM Z. Angew. Math. Mech.}, 96(1):5--23, 2016.

\bibitem{heida2017stochastic3}
M.~Heida and B.~Schweizer.
\newblock Stochastic homogenization of plasticity equations.
\newblock {\em ESAIM Control Optim. Calc. Var}, 2018.

\bibitem{jagota1994spring}
A.~Jagota and S.~Bennison.
\newblock Spring-network and finite-element models for elasticity and fracture.
\newblock In {\em Non-linearity and Breakdown in Soft Condensed Matter}, pages
  186--201. Springer, 1994.

\bibitem{krylov2008lectures}
N.~V. Krylov.
\newblock {\em Lectures on elliptic and parabolic equations in Sobolev spaces},
  volume~96.
\newblock American Mathematical Soc., 2008.

\bibitem{liero2015homogenization}
M.~Liero and S.~Reichelt.
\newblock Homogenization of {$\text{C}$}ahn--{$\text{H}$}illiard-type equations
  via evolutionary {$\Gamma$}-convergence.
\newblock {\em Nonlinear Differential Equations and Applications NoDEA},
  25(1):6, 2018.

\bibitem{lukkassen2002two}
D.~Lukkassen, G.~Nguetseng, and P.~Wall.
\newblock Two-scale convergence.
\newblock {\em Int. J. Pure Appl. Math}, 2(1):35--86, 2002.

\bibitem{mathieu2007quenched}
P.~Mathieu and A.~Piatnitski.
\newblock Quenched invariance principles for random walks on percolation
  clusters.
\newblock In {\em Proceedings of the Royal Society of London A: Mathematical,
  Physical and Engineering Sciences}, volume 463, pages 2287--2307. The Royal
  Society, 2007.

\bibitem{mielke2005evolution}
A.~Mielke.
\newblock Evolution of rate-independent systems.
\newblock {\em Evolutionary equations}, 2:461--559, 2005.

\bibitem{mielke2016evolutionary}
A.~Mielke.
\newblock On evolutionary {$\Gamma$}-convergence for gradient systems.
\newblock In {\em Macroscopic and Large Scale Phenomena: Coarse Graining, Mean
  Field Limits and Ergodicity}, pages 187--249. Springer, 2016.

\bibitem{mielke2014two}
A.~Mielke, S.~Reichelt, and M.~Thomas.
\newblock Two-scale homogenization of nonlinear reaction-diffusion systems with
  slow diffusion.
\newblock {\em Netw. Heterog. Media}, 9(2), 2014.

\bibitem{mielke2015rate}
A.~Mielke and T.~Roub{\'\i}{\v{c}}ek.
\newblock {\em Rate-Independent Systems: Theory and Application}, volume 655.
\newblock Springer, 2015.

\bibitem{mielke2008gamma}
A.~Mielke, T.~Roub{\'\i}{\v{c}}ek, and U.~Stefanelli.
\newblock {$\Gamma$}-limits and relaxations for rate-independent evolutionary
  problems.
\newblock {\em Calc. Var. Partial Differential Equations}, 31(3):387--416,
  2008.

\bibitem{mielke2007two}
A.~Mielke and A.~M. Timofte.
\newblock Two-scale homogenization for evolutionary variational inequalities
  via the energetic formulation.
\newblock {\em SIAM J. Math. Anal.}, 39(2):642--668, 2007.

\bibitem{neukamm2010homogenization}
S.~Neukamm.
\newblock {\em Homogenization, linearization and dimension reduction in
  elasticity with variational methods}.
\newblock PhD thesis, Technische Universit{\"a}t M{\"u}nchen, 2010.

\bibitem{neukamm2016stochastic}
S.~Neukamm, M.~Sch{\"a}ffner, and A.~Schl{\"o}merkemper.
\newblock Stochastic homogenization of nonconvex discrete energies with
  degenerate growth.
\newblock {\em SIAM J. Math. Anal.}, 49(3):1761--1809, 2017.

\bibitem{nguetseng1989general}
G.~Nguetseng.
\newblock A general convergence result for a functional related to the theory
  of homogenization.
\newblock {\em SIAM J. Math. Anal.}, 20(3):608--623, 1989.

\bibitem{ostoja2002lattice}
M.~Ostoja-Starzewski.
\newblock Lattice models in micromechanics.
\newblock {\em Appl. Mech. Rev.}, 55(1):35--60, 2002.

\bibitem{piatnitski2017homogenization}
A.~Piatnitski and M.~Ptashnyk.
\newblock Homogenization of biomechanical models for plant tissues.
\newblock {\em Multiscale Model. Simul.}, 15(1):339--387, 2017.

\bibitem{ptashnyk2015locally}
M.~Ptashnyk.
\newblock Locally periodic unfolding method and two-scale convergence on
  surfaces of locally periodic microstructures.
\newblock {\em Multiscale Model. Simul.}, 13(3):1061--1105, 2015.

\bibitem{visintin2004some}
A.~Visintin.
\newblock Some properties of two-scale convergence.
\newblock {\em Atti Accad. Naz. Lincei Rend. Lincei Mat. Appl.}, 15(2):93--107,
  2004.

\bibitem{visintin2006towards}
A.~Visintin.
\newblock Towards a two-scale calculus.
\newblock {\em ESAIM Control Optim. Calc. Var}, 12(3):371--397, 2006.

\bibitem{zhikov2006homogenization}
V.~V. Zhikov and A.~Pyatnitskii.
\newblock Homogenization of random singular structures and random measures.
\newblock {\em Izv. Math.}, 70(1):19--67, 2006.

\end{thebibliography}
\end{document}